\newtheorem{theo}{Theorem}[section]
\newtheorem{prop}[theo]{Proposition}
\newtheorem{coro}[theo]{Corollary}
\newtheorem{assu}{Assumption}
\newtheorem{lemm}[theo]{Lemma}
\newtheorem{rema}[theo]{Remark}
\def\R{\mathbb{R}}
\def \N{\mathbb{N}}
\def \Z{\mathbb{Z}}
\newcommand{\1}[1]{\mathbf{1}_{\set{#1}}}
\def \Pe{\mathbb{P}^{\mathcal{E}}}
\def \P{\mathbb{P}} 
\def \Ee{\mathbb{E}^{\mathcal{E}}}
\def \E{\mathbb{E}} 
\def \F{{\cal F}} 
\def \B{{\cal B}} 
\newcommand{ \un }{\mathbf{1}}
\newcommand{\nuk}[1][k]{\nu_{#1}}
\newcommand{\egloi}{\stackrel{\mathcal{L}}{=}}
\newcommand{\cro}[1]{\left[ \left. #1 \right. \right]}
\newcommand{\GL}[1][\alpha]{G(#1)}
\newcommand{\Var}{\mathbb{V}\mbox{ar}}
\def\ud{\text{d}}
\def\T{\mathbb{T}}
\newcommand{\VM}{\overline{V}}
\newcommand{\Vm}{\underline{V}}
\newcommand{\red}[1]{{\color{red}{#1}}}
\newcommand{\ee}[1]{\e[\mathcal{E}]{#1}}
\newcommand{\pe}[1]{\p[\mathcal{E}]{#1}}
\newcommand{\Tn}[1][n]{T^{#1}}
\newcommand{\rang}[1][\alpha]{R^{\Tn}_{#1}}
\newcommand{\rangb}[2][n]{R^{\Tn[#1]}_{#2}}
\newcommand{\NT}[1][x]{N_{#1}^{\Tn}}
\newcommand{\NTi}[2]{N_{#1}^{\Tn[#2]}}
\newcommand{\lemref}[1]{Lemma~\ref{lem:#1}} 
\renewcommand{\T}{\mathbb{T}}
\newcommand{\paren}[1]{\left( \left. #1 \right. \right)} 
\newcommand{\set}[1]{\left\{ \left. #1 \right. \right\}}
\newcommand{\absj}[1]{\left\lvert #1 \right\rvert} 
\providecommand{\norm}[1]{\left \lVert #1 \right\rVert}
\renewcommand{\P}{\mathbb{P}}
\newcommand{\seq}[2][n]{\left(#2_{#1}\right)_{#1\in\N}}
\newcommand{\seqb}[2][n]{\left(#2_{#1}\right)_{#1\in\N^*}}
\newcommand{\est}[2][n]{\widehat{#2}_{#1}}
\newcommand{\NM}{N_{max}}
\newtheorem{postita}{Post-it}
\newcommand{\e}[2][]{\E^{#1}\!\cro{#2}}
\renewcommand{\P}{\mathbb{P}}
\newcommand{\p}[2][]{\P^{#1}\!\paren{#2}}
\newcommand{\bP}{\mathbf{P}}
\newcommand{\bp}[2][]{\bP^{#1}\!\paren{#2}}
\def\T{\mathbb{T}}
\newcommand{\Ldel}[1][\delta]{L_n^{#1}}
\begin{document}

\begin{frontmatter}


\title{The heavy range of randomly biased walks on trees} 


\author{\fnms{Pierre} \snm{Andreoletti}\ead[label=e1]{Pierre.Andreoletti@univ-orleans.fr}}
\address{
Institut Denis Poisson,   UMR CNRS 7013,
Universit\'e d'Orl\'eans, Orl\'eans, France. \printead{e1}}

\and 
\author{\fnms{Roland} \snm{Diel}\ead[label=e2]{diel@unice.fr}}
\address{Universit\'e C\^ote d'Azur, CNRS, LJAD, France. \printead{e2}}

\runauthor{Andreoletti, Diel}
\runtitle{The heavy range of randomly biased walks on trees}

\begin{abstract}
We focus on recurrent random walks in random environment (RWRE) on Galton-Watson trees. The range of these walks, that is the number of sites visited at some fixed time, has been studied in three different papers \cite{AndChen}, \cite{AidRap} and \cite{DeRap}. Here we study the heavy range: the number of edges visited at least $\alpha$ times for some integer $\alpha$. The asymptotic behavior of this process when $\alpha$ is a power of the number of steps of the walk is given for all the recurrent cases. It turns out that this heavy range plays a crucial role in the rate of convergence of an estimator of the environment from a single trajectory of the RWRE.  
\end{abstract}

 \begin{keyword}[class=AenMS]
  \kwd[MSC : Primary ] {60K37}
   \kwd{60J80}
   \kwd[; Secondary ]{62G05}
 \end{keyword}

\begin{keyword}
\kwd{randomly biased random walks}
\kwd{branching random walks}
\kwd{range}
\kwd{non-parametric estimation}
\end{keyword}

\end{frontmatter}

\section{Introduction}

 In this paper, random walks in random environment (RWRE) on a supercritical Galton-Watson tree are considered.   We first focus on the number of edges frequently visited by the walk, this random variable is called {\it heavy range} in the paper. This can be seen in some sense as an extension of the works of \cite{AndChen}, generalized to the whole class of recurrent walks on trees. 
A second aim is to apply our control on the heavy range to a problem of non-parametric estimation for the distribution of the environment, extending this time the work of \cite{DieLer:2017} in the one-dimensional case.

Let us first give a precise definition of the process we are interested in. Consider a supercritical Galton-Watson tree $\T$ with offspring  distributed as a random variable $\nu$. 
 In the paper, we adopt the following usual notations for tree-related quantities: the root  of $\T$ is denoted by $e$, for any $x \in \T$, $\nu_x$ denotes the number of descendants of $x$, 
 the parent of a vertex $x$ is denoted by $x^*$ and its children by $\set{x_i,\ 1\leq i\leq \nu_x}$. For technical reasons, we add to the root $e$, a parent $e^*$ which is not considered as a vertex of the tree. We also denote by $\llbracket x, y\rrbracket$ the sequence of vertices  realizing the unique shortest path between $x$ and $y$, by $|x|$ the generation of $x$ that is the length of the path $\llbracket e, x\rrbracket$ and we write $x<y$ (resp. $x\leq y$) when $y$ is a descendant of $x$ that is when $x$ is an element of $\llbracket e, y^*\rrbracket$ (resp. of $\llbracket e, y\rrbracket$). We also write $x\wedge y$ for the common ancestor of $x$ and $y$ belonging to the largest generation. Finally, we write $\T_n$ for the tree truncated at generation $n$. We then introduce a real-valued branching random walk indexed by $\T$: $\paren{V(x),x\in\T}$. We assume that $V(e)=0$ and we denote the increments of $V(x)$ by $\omega_x:=V(x)-V(x_*)$. For any generation $n$ and conditionally to $\mathcal{E}_n=\set{\T_n,(V(x),x\in\T_n)}$, the variables $\paren{\omega_{x_i},\ i\leq \nu_x}$ where $x$ is a vertex of $\T$ such that $|x|=n$ are assumed to be i.i.d. distributed as a random variable $\paren{\omega_i,i\leq\nu}$. 
 We denote by $\bP$ the distribution of $\mathcal{E}=\set{\T,\paren{V(x),x\in\T}}$; this variable is called the random environment of the walk.

For a given realization of $\mathcal{E}$, we consider the Markov chain $\seq{X}$ on $\T\cup\set{e^*}$ with transition probabilities defined by  the following relations: 
\begin{align*}
&\pe{X_{n+1}=e|X_n=e^*}=1\ ,\\
\forall x\in\T\setminus\set{e^*},\quad &\pe{X_{n+1}=x^*|X_n=x}=\frac{e^{-V(x)}}{e^{-V(x)}+\sum_{i=1}^{\nu_x}e^{-V(x_i)}}=\frac{1}{1+\sum_{i=1}^{\nu_x}e^{-\omega_{x_i}}}\ ,\\
	\forall j\leq\nu_x,\quad &\pe{X_{n+1}=x_j|X_n=x}=\frac{e^{-V(x_j)}}{e^{-V(x)}+\sum_{i=1}^{\nu_x} e^{-V(x_i)}}=\frac{e^{-\omega_{x_j}}}{1+\sum_{i=1}^{\nu_x} e^{-\omega_{x_i}}}\enspace.
\end{align*}
The measure $\Pe$ is usually referred to as the quenched distribution of the walk $\seq{X}$ in contrast to the annealed distribution $\P$ defined as the measure $\Pe$ integrated with respect to the law of $\mathcal{E}$: 
$$\p{\cdot}=\int\pe{\cdot}\bp{\ud \mathcal{E}}\ .$$
For $x\in\T\cup\set{e^*}$, we use the notation $\Pe_x$ for the conditional probability  $\Pe(\cdot | X_0=x)$. When there is no subscript, the walk is supposed to start at the root $e$. We finally introduce $\P^*$, the annealed probability conditioned on the survival set of the tree $\T$.

The walk $\seq{X}$, called biased random walk on a tree, was first introduced by R. Lyons (see \cite{Lyons} and \cite{Lyons2}). In our case where the bias is random, the first references go back to R. Lyons and R. Pemantle \cite{LyonPema} and M.V. Menshikov and D. Petritis \cite{MenPet}. Random walks in random environment on trees form a subclass of canonical models in the more general framework of random motions in random media that are widely used in physics. They are a natural extension of the one dimensional model, originally introduced in the works of \cite{Chernov}.  These models  have been intensively studied in  the last four decades, mostly   in   the    physics   and    probability   theory
literature.

 The behaviors of randomly biased walks on trees differ deeply from the behaviors of the RWRE in the one-dimensional case.  In particular there are several regimes for both recurrent and transient cases.  A complete classification for the recurrent cases is given by G. Faraud \cite{Faraud} (for the transient cases, see E. Aidekon \cite{Aidekon2008}). It can be determined from the fluctuations of log-Laplace transform $\psi(s):=\log \e{\sum_{|z|=1} e^{-sV(z)}}$ as resumed in Figure \ref{fig1}.
    \begin{figure}[ht]
\begin{center} 
{\scalebox{1.2} {\input{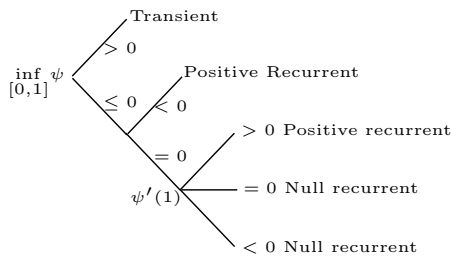} }}
\caption{Recurrence criteria for $\seq{X}$ on Galton-Watson trees} \label{fig1}
\end{center}
\end{figure}

Here we focus on the recurrent cases that is to say when $\inf_{ s \in[0,1]} {\psi(s)} \leq 0$.  They present essentially three different asymptotic regimes which depend on the fluctuation of $\psi$. In the paper, we need a bunch of classical assumptions which are summarized here:
\newpage

\begin{assu}\label{ass1}$ $
	\begin{itemize}
	\item the Galton-Watson tree is supercritical: $\e{\sum_{|x|=1}1}=\e{\nu}\in(1,\infty)$.
	\item the log-Laplace transform $\psi$ is well defined on a neighborhood of $[0,1]$:
\begin{align}\label{hyp0}
\exists r_1,r_2>0,\ \forall s\in [-r_1,1+ r_2 ],\quad \psi(s)<\infty\ .
\end{align}
	\item  $\inf_{ s \in[0,1]} {\psi(s)} \leq 0$ (so $\seq{X}$ is recurrent).

	\item let $t_0:=\inf\set{s\geq0,\ \psi(s)=0}$ then:
\begin{align}\label{hyp1}
 \E\Big[\Big(\sum_{|z|=1}e^{-t_0V(z)}\Big)^2\Big] <+ \infty.
\end{align}
 and if $t_0=1$:
\begin{itemize}
\item if $\psi'(1) \geq 0$, there exists $\delta>0$ such that $\E[\nu^{1+ \delta}]<+\infty, $ 
\item if $\psi'(1)<0$, let
\begin{align}
\kappa:= \inf\{s>1, \psi(s)=0 \} \in (1,+\infty] \label{defkappa},
\end{align}
when $\kappa<\infty$,
\begin{align}
 \E\Big[\sum_{|z|=1} e^{-\kappa V(z)} \cdot \max(-V(z),0)\Big] <+ \infty\ . \label{hyppsineg}
\end{align}
\end{itemize}
\end{itemize}
\end{assu}
 
    Let us briefly describe the different recurrent cases (assuming the above conditions).
 \begin{figure}[ht]
\begin{center} 
\scalebox{0.85} {\input{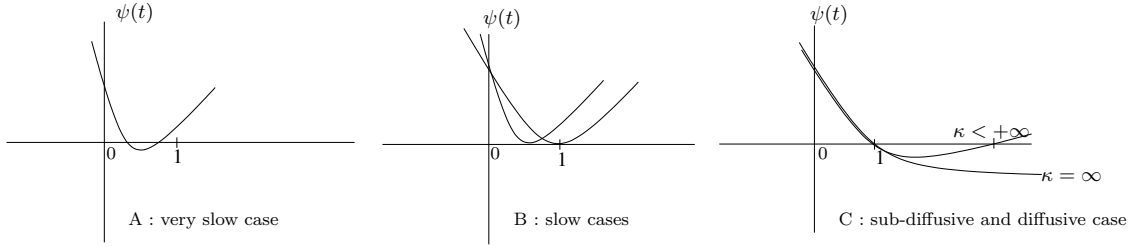} }
\caption{Log-Laplace $\psi$ and corresponding behaviors of $\seq{X}$.} \label{fig2}
\end{center}
\end{figure}

First, when $\inf_{ s \in[0,1]} {\psi(s)} < 0$ (Figure \ref{fig2}: A), the walk is positive recurrent and the largest generation visited up to an instant $n$, $X_n^*:= \max_{k \leq n}|X_k|$ is of the order $\log n$ (\cite{HuShi10}), this is the slowest case, even slower than one dimensional Sinai's random walk (\cite{Sinai}).

If $\inf_{ s \in[0,1]} {\psi(s)} = 0$ (Figure \ref{fig2}: B and C), then different behaviors appear depending on the sign of $\psi'(1)$. When $\psi'(1) \geq 0$ (Figure \ref{fig2}: B), there are again two possible cases: if $\psi'(1)>0$ then the walk is positive recurrent whereas when $\psi'(1)=0$ the walk is null recurrent. However these two cases lead to the same asymptotic behavior for $X_n^*$ (up to some multiplicative constant) which is of the order $(\log n)^3$ (see \cite{HuShi10a} and \cite{HuShi10b}).   

Finally, when $\psi'(1) < 0$ (Figure \ref{fig2}: C), the walk is null recurrent and $X_n^*$ is of the order $n^{1- 1/ \min(\kappa,2)}$  where $\kappa$ is defined in \eqref{defkappa} (see \cite{HuShi10}, \cite{Faraud}, \cite{AidRap}, \cite{DeRap}).

\smallskip

In this paper we study the heavy range: the number of edges of the tree which are frequently visited by the walk. Precisely, for $x \in \T$, let $N_x^{n}$ be the number of times the walk $\seq{X}$ visits the edge $(x^*,x)$ before time $n$:
\[N_x^{n}:=\sum_{k=1}^{n} \1{X_{k-1}=x^*, X_{k}=x}\ . \] 
Then, for any $\alpha>0$ the  {\it heavy range} (of level $\alpha$) is the random variable
\begin{align} 
R^{n}_\alpha:=\sum_{x \in \T }  \1{ N_x^{n} \geq \alpha}\ .  \label{defR}  
\end{align}

Note that for $\alpha=1$, the above random variable is the usual range. In this case, the asymptotic behavior is known for all null recurrent cases, see  \cite{AndChen}, \cite{AidRap} and \cite{DeRap}.
The heavy range is then a natural generalization of the usual range which helps to understand how the random walk spreads on the tree. 
%
\smallskip

Consider now the $n^{th}$ return to $e^*$: 
$$\Tn[0]:=0\quad \text{and}\quad \forall n\geq1,\ \Tn :=\inf\{ k>\Tn[n-1],\  X_k=e^* \}\ .$$
 The following theorem gives the behavior of the heavy range taken at time $\Tn$ in all recurrent cases. Note that in these cases, $\Tn$ is almost surely finite for every integer $n$.

\begin{theo}\label{thm1}
Assume \ref{ass1}. For any $\theta\geq0$, 
\begin{center}
$\frac{\log^+\rang[n^{\theta}]}{\log n}$ converges in $\P^*$-probability to a constant $\xi_\theta$ 	
\end{center}
 where $\log^+ x$ stands for $\log(\max(1,x))$. Moreover, when $\theta\geq1$, $\xi_\theta=0$ and, when $\theta<1$:
 \begin{itemize}
 \item if $\inf_{s \in [0,1]} \psi(s)<0$ or $\inf_{s \in [0,1]} \psi(s)=0$ with $\psi'(1) \geq 0$, then
 \begin{align} \label{SlowCases} 
 \xi_\theta=t_0(1-\theta).
 \end{align} 
\item if $\inf_{s \in [0,1]} \psi(s)=0$ with $\psi'(1) < 0$, then
\begin{align} 
\xi_\theta& = \kappa (1-\theta)\ \text{if }\ 1<\kappa   \leq 2, \label{SubDiffusiveCases} \\
\xi_\theta& =\max\Big(2 - \kappa \theta, 1-\theta\Big )\ \text{if }\ \kappa \in(2,\infty] \label{DiffusiveCases}.  
\end{align}
\end{itemize}
\end{theo}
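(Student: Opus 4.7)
The plan is to prove the theorem by combining first-moment and second-moment estimates on the sum $\rang[n^{\theta}]=\sum_{x\in\T}\mathbf{1}_{N_x^{T^n}\geq n^{\theta}}$, after a sharp quenched analysis of $N_x^{T^n}$ reduces the problem to a one-dimensional counting problem handled by the many-to-one lemma for branching random walks. The first step is the quenched analysis: conditional on $\mathcal{E}$, the number of crossings of the edge $(x^*,x)$ during $n$ excursions from $e^*$ decomposes as a $\text{Binomial}(n,p_x)$ number of independent $\text{Geometric}(1-q_x)$ variables, where $p_x$ is the probability that an excursion from $e^*$ reaches $x$ and $1-q_x$ the probability of escaping from $x$ back to $e^*$. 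Standard electrical-network identities on the tree give $p_x\asymp e^{-\max_{y\in\llbracket e,x\rrbracket}V(y)}$ and, in the positive recurrent cases, $\Ee[N_x^{T^n}]\asymp n\,e^{-V(x)}$. A Chernoff-type concentration converts the event $\{N_x^{T^n}\geq n^\theta\}$ into a deterministic condition on $V$ along the ancestral line of $x$; in the typical regime this amounts essentially to $V(x)\leq(1-\theta)\log n$.

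The second step counts vertices satisfying the previous condition via the many-to-one lemma, which expresses a sum of the form $\sum_{|x|=k}f(V(x))$ as the expectation of $f$ evaluated at a one-dimensional random walk $S_k^{(t)}$ whose step distribution is a $t$-tilt of the offspring/displacement law and whose cumulant is controlled by $\psi$. For the upper bound one picks the tilt so that $\psi(t)=0$: $t=t_0$ in the slow and very slow cases, yielding the exponent $t_0(1-\theta)$, and $t$ just below $\kappa$ in the null recurrent case $\psi'(1)<0$, yielding $\kappa(1-\theta)$ in the subdiffusive regime $1<\kappa\leq 2$. Summing over generations up to the largest one the walk can reach and applying Markov's inequality give $\rang[n^\theta]\leq n^{\xi_\theta+\varepsilon}$ with $\P^\star$-probability tending to one. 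For the matching lower bound, I isolate a set of good vertices at a carefully chosen generation whose potentials lie in a narrow window making $\pe{N_x^{T^n}\geq n^\theta}$ uniformly bounded below; a second-moment/Paley--Zygmund argument, together with many-to-two variance estimates on the branching random walk (the covariance between two vertices depending only on their common ancestor $x\wedge y$), gives $\rang[n^\theta]\geq n^{\xi_\theta-\varepsilon}$. The case $\theta\geq 1$, where $\xi_\theta=0$, falls out of the same upper bound machinery applied to a window of $V$ that is empty with high probability.

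The main obstacle lies in the diffusive regime $\kappa>2$, where $\xi_\theta=\max(2-\kappa\theta,\,1-\theta)$ reflects two competing mechanisms. The term $1-\theta$ comes from moderate-depth vertices where the compound binomial-geometric sum concentrates around its quenched mean $ne^{-V(x)}$: counting those with $V(x)\leq(1-\theta)\log n$ via a tilt slightly below $\kappa$ produces this contribution. The term $2-\kappa\theta$ comes from rarer deep vertices, at generation of order $n^{1/2}$, whose visit counts inherit heavy-tailed fluctuations: a single excursion hitting such a vertex can, with small but non-negligible probability, deposit a number of visits far exceeding the quenched mean, with probability governed by the heavy-tailed distribution of the trap holding times. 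Capturing this contribution requires refined tail estimates on the compound geometric sum and a second-moment argument distinguishing the two scales; balancing the two terms by optimizing over the generation parameter recovers $\max(2-\kappa\theta,\,1-\theta)$. This is the tree analogue, more delicate at the second-moment stage, of the duality visible for the usual range in \cite{AidRap,DeRap}.
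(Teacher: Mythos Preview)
Your outline captures the broad architecture correctly (first moment for the upper bound, second moment for the lower bound, many-to-one to reduce to a one-dimensional walk), and for the slow cases your reduction of $\{N_x^{T^n}\ge n^\theta\}$ to $\{V(x)\le(1-\theta)\log n\}$ via Chernoff is essentially what the paper does in Proposition~\ref{proprange0}. But the plan contains a genuine gap in the fast cases $\psi(1)=0$, $\psi'(1)<0$.

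Your claim that the exponent $\kappa(1-\theta)$ in the subdiffusive regime $1<\kappa\le2$ arises from ``tilting at $t$ just below $\kappa$'' applied to vertices with $V(x)\le(1-\theta)\log n$ cannot work. In this regime $t_0=1$, so the number of vertices with $V(x)\le(1-\theta)\log n$ is of order $n^{t_0(1-\theta)}=n^{1-\theta}$, strictly smaller than $n^{\kappa(1-\theta)}$. A tilt near $\kappa$ does not change this count; $\kappa$ is the second zero of $\psi$ and governs tail probabilities, not the population of low-potential vertices. The missing object is the variable $H_x=\sum_{y\le x}e^{V(y)-V(x)}$: the dominant contribution in the subdiffusive regime (and to the $2-\kappa\theta$ term in the diffusive regime) comes from vertices with \emph{large} potential but $H_x$ of order $n^\theta$, sitting at generations of order $n^{(\kappa-1)\wedge1}$, visited during a \emph{single} excursion and trapped there. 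The exponent $\kappa$ enters through the tail $\P(H^S_\ell\ge m)\asymp m^{-(\kappa-1)}$ (Lemma~\ref{compHS}), not through a many-to-one tilt. You correctly identify this trap mechanism in your discussion of the diffusive case, but it is equally the source of the subdiffusive exponent; the paper handles both via the set $\Gamma_n$ in Proposition~\ref{proprange0bis}.

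A second, smaller issue: your upper bound sketch for the fast cases does not indicate how you control $\Pe(N_x^{T^n}\ge n^\theta)$ for vertices with high potential. The naive Markov bound $ne^{-V(x)}/n^\theta$ summed over $n^{(\kappa-1)\wedge1}$ generations overshoots. The paper's Proposition~\ref{proprange} splits off vertices hit in more than one excursion (negligible) from those hit once, and for the latter uses the exact geometric law with parameter $1-1/H_x$ together with the $H^S_\ell$ tail. Without $H_x$ in your toolbox neither the upper nor the lower bound in the fast regimes goes through.
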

The proof of Theorem \ref{thm1} for $\theta<1$ is given in Section \ref{secSR}. The cases $\theta\geq1$ are then easily obtained using the results in the cases $\theta<1$ and the fact that, for any $n\geq1$, $\rang[n^{\theta}]$ is a non-increasing function of $\theta$.

Note that \eqref{SlowCases} corresponds to the slow cases, that is when $X_n$ behaves like a power of $\log n$. Also $t_0<1$ if and only if the random walk is positive recurrent whereas $t_0=1$ corresponds to the so called boundary case for the branching potential $(\psi(1)=\psi'(1)=0)$.

For the diffusive cases, that is when $\kappa>2$, the heavy range is of the order $n^{2 - {\kappa \theta}+o(1)}$,  larger than $n^{1-\theta}$ for $\theta\leq1/(\kappa-1)$. Conversely, for $\theta\geq1/(\kappa-1)$, that is to say if we are interested in sites often visited by the walk, the range is of the order $n^{1-\theta}$. We will see in the proof that this fact depends deeply where the edges sufficiently visited are localized on the tree. 

The asymptotics of the heavy range in deterministic time will be easier to interpret in terms of the behavior of the walk. They are described in Theorem \ref{Cortpsdet}. The theorem is proved using Theorem \ref{thm1} and a control on $\Tn$, it is done in Section \ref{sec3.3}.

The behavior of $\Tn$ has been studied in \cite{AndDeb2} and more precisely in \cite{HuShi15} and \cite{Hu2017}, for completeness we recall these results in the following remark; note that in these papers the results are given for  the $n^{\text{th}}$-return time to the root $e$ and not to $e^*$ but this does not change the rates of convergence. 

\begin{rema} \label{localtimeroot}$ $
\begin{enumerate}
\item \label{un} If $\inf_{s \in [0,1]} \psi(s) = 0$, with $\psi'(1) = 0$ then in probability $\Tn/(n \log n)$ converges in probability to a positive limit (see \cite{HuShi15}, Proposition 2.5). 
\item \label{deux} If  $\inf_{s \in [0,1]} \psi(s) = 0$, $\psi'(1) < 0$ and  $\kappa \neq 2$, $\Tn/n^{\min(\kappa,2)}$ converges in law to a positive limit and if $\kappa=2$, $ (\Tn\log n)/n^2$ converges in law to a positive limit (see \cite{Hu2017}, Corollary 1.2).
\item \label{trois} Finally if $\psi'(1) > 0$ with $\inf_{s \in [0,1]} \psi(s) = 0$ or $\inf_{s \in [0,1]} \psi(s) < 0$ then $\Tn/n$ converges in probability to a positive limit (both are positive recurrent random walks).
\end{enumerate}
\end{rema}

\noindent The behavior of the heavy range in deterministic time is given in the following theorem:
 \begin{theo} \label{Cortpsdet} 
  Assume \ref{ass1}. For any $\theta\in[0,1]$,
  \begin{center}
$\frac{\log^+ R^{n}_{n^{\theta}}}{\log n}$ converges in $\P^*$-probability to a constant $\tilde{\xi}_\theta$.
\end{center}
Moreover,
 \begin{itemize}
\item if $\inf_{s \in [0,1]} \psi(s) \leq 0$ with $\psi'(1) \geq 0$, then  $\tilde{\xi}_\theta=\xi_\theta=t_0(1-\theta)$,  
\item if $\inf_{s \in [0,1]} \psi(s)=0$ with $\psi'(1) < 0$ and 
 \begin{itemize}
 	\item if $1<\kappa\leq 2$, then $\tilde{\xi}_\theta=1-\kappa\theta$ for $\theta\leq 1/\kappa$ and 0 otherwise,
 	\item if $\kappa\in(2,\infty]$, then $\tilde{\xi}_\theta=\max(1-\kappa\theta\ ,\ 1/2-\theta)$ for $\theta\leq 1/2$ and 0 otherwise.
 \end{itemize}
\end{itemize}
 \end{theo}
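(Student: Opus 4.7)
The plan is to bootstrap Theorem~\ref{thm1} by sandwiching $R^n_{n^\theta}$ between ranges at the stopping times $\Tn[m_-(n)]$ and $\Tn[m_+(n)]$, with $m_\pm(n)$ chosen so that $\Tn[m_-(n)]\le n\le \Tn[m_+(n)]$ with high probability. Let $a_n$ denote the typical scale of $\Tn$ recalled in Remark~\ref{localtimeroot}, namely $a_n=n$ when $\inf_{[0,1]}\psi<0$ or $\psi'(1)>0$, $a_n=n\log n$ when $\psi'(1)=0$, $a_n=n^{\min(\kappa,2)}$ when $\psi'(1)<0$ with $\kappa\neq 2$, and $a_n=n^2/\log n$ when $\kappa=2$. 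In every regime Remark~\ref{localtimeroot} yields that $(\Tn/a_n)_n$ is tight in $(0,\infty)$ under $\P^*$, so for any $\varepsilon>0$ one can fix $K=K(\varepsilon)>1$ with $\P^*(K^{-1}a_n\le \Tn\le K a_n)\ge 1-\varepsilon$ for $n$ large. Setting $m_-(n):=\lfloor a^{-1}(n/K)\rfloor$ and $m_+(n):=\lceil a^{-1}(Kn)\rceil$, the event $\{\Tn[m_-(n)]\le n\le \Tn[m_+(n)]\}$ has $\P^*$-probability at least $1-2\varepsilon$, and the monotonicity of $k\mapsto R^k_\alpha$ gives
\begin{equation*}
R^{\Tn[m_-(n)]}_{n^\theta}\ \le\ R^n_{n^\theta}\ \le\ R^{\Tn[m_+(n)]}_{n^\theta}.
\end{equation*}

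The second step transfers the level $n^\theta$ back to a power of $m_\pm(n)$. Since $\log m_\pm(n)/\log n\to 1/\gamma$, where $\gamma:=1$ in the slow and $\psi'(1)\ge 0$ cases, $\gamma:=\kappa$ for $1<\kappa\le 2$, and $\gamma:=2$ for $\kappa>2$ (the logarithmic factor at $\kappa=2$ contributes only $O(\log\log n)$ and is invisible at the $\log/\log$ scale), writing $n^\theta=m_\pm(n)^{\theta_n^\pm}$ gives $\theta_n^\pm\to\gamma\theta$. For any fixed $\eta>0$ and $n$ large, the monotonicity of $R^{\Tn[m]}_\alpha$ in $\alpha$ yields
\begin{equation*}
R^{\Tn[m_\pm(n)]}_{m_\pm(n)^{\gamma\theta+\eta}}\ \le\ R^{\Tn[m_\pm(n)]}_{n^\theta}\ \le\ R^{\Tn[m_\pm(n)]}_{m_\pm(n)^{\gamma\theta-\eta}}.
\end{equation*}
Applying Theorem~\ref{thm1} at the deterministic levels $\gamma\theta\pm\eta$, multiplying by $\log m_\pm(n)/\log n\to 1/\gamma$, using the continuity of $\theta\mapsto\xi_\theta$ (piecewise affine with matching values at all break points, in particular at $\theta=1$ and, in the diffusive regime, at $\theta=1/(\kappa-1)$), and letting $\eta\to 0$ produces
\begin{equation*}
\frac{\log^+R^n_{n^\theta}}{\log n}\longrightarrow\frac{\xi_{\gamma\theta}}{\gamma}=:\tilde\xi_\theta\qquad\text{in }\P^*\text{-probability}.
\end{equation*}
Substituting from \eqref{SlowCases}--\eqref{DiffusiveCases}: when $\gamma=1$, $\tilde\xi_\theta=\xi_\theta=t_0(1-\theta)$; when $\gamma=\kappa\in(1,2]$, $\tilde\xi_\theta=1-\kappa\theta$ for $\theta\le 1/\kappa$ and $0$ otherwise; when $\gamma=2$ and $\kappa>2$, $\tilde\xi_\theta=\max(1-\kappa\theta,\,1/2-\theta)$ for $\theta\le 1/2$ and $0$ otherwise. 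These are exactly the values claimed.

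The main subtlety is that in the $\psi'(1)<0$ regime Remark~\ref{localtimeroot}(\ref{deux}) only provides convergence in law of $\Tn/a_n$; but the sandwich argument needs only tightness via a deterministic $K$, which follows from the limit being non-degenerate on $(0,\infty)$. The $\kappa=2$ logarithmic correction is absorbed at the $\log/\log$ scale. The endpoint $\theta=0$ reduces to the usual range, already covered in \cite{AndChen,AidRap,DeRap}, and is also recovered from our formula by sending $\theta\downarrow 0$.
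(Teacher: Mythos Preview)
Your argument is correct and follows essentially the same route as the paper: both sandwich $R^n_{n^\theta}$ between heavy ranges at return times $T^{m_\pm(n)}$ with $m_\pm(n)\sim n^{1/\gamma}$, then invoke Theorem~\ref{thm1} together with the asymptotics of $T^m$ from Remark~\ref{localtimeroot}. The paper spells out only one sub-case (diffusive, $1-\kappa\theta>1/2-\theta$) with the explicit choice $m=\lfloor n^{(1+\epsilon)/2}\rfloor$, while you package all regimes uniformly via the formula $\tilde\xi_\theta=\xi_{\gamma\theta}/\gamma$ and the continuity of $\xi_\cdot$; this is a cleaner presentation of the same idea.
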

 
Let us compare, for null recurrent cases, the heavy range $R^{n}_{n^{\theta}}$ for $\theta>0$ and the regular range, that is $ R^{n}_{1}$. It is proved in \cite{AndChen} that for the slow case, when $\psi(1)=\psi'(1)=0$, $ \frac{\log n}{n}R^n_1$ converges in probability to an explicit positive constant, whereas for the sub-diffusive and diffusive case, when $\psi(1)=0$, $\psi'(1)<0$, it is proved in \cite{AidRap} and \cite{DeRap} that the correct normalization for the convergence in probability of $R^n_1$ is simply $1/n$. So for the regular range we observe that sub-diffusive and diffusive cases spread more than in the slow case. For the heavy range the opposite appears, indeed when $\psi(1)= \psi'(1)=0$, $t_0=1$ and $\tilde{\xi}_\theta=1- \theta$ which is larger than $1- \kappa \theta$ for $\kappa>1$. This tells us that the environment of the slow null recurrent case creates much more vertices where the walk spends larger amount of time than in the other cases.

Here we obtain the first order for the asymptotic expansion of the heavy range for any recurrent cases and of course a natural question is now to obtain the correct normalization for this range. 

\subsection{Non-parametric estimation of the law of the environment}\label{resstat}

The study of the heavy range has been partly motivated by the statistical problem we present here. The statistical study of random processes in random environment has been overlooked in the literature  until recently, when new  biophysics experiments produced data that can be  modeled (at least in an ideal-case setup) by RWRE. For example in \citep{Monasson}, RWRE are used as a mathematical model of a mechanical denaturation of DNA. Consequently, a  new series of works appeared on statistical  procedures aiming  at estimating the distribution of the environment from a RWRE trajectory. 

We start with a brief resume of what has been done recently in the one dimensional discrete case. Recall that for  RWRE on $\Z$, the environment is a sequence of random variables $(\rho_x)_{x\in \Z}$.
When the environment is i.i.d., the estimation of the distribution of $\rho_0$ observing only the RWRE $\seq{X}$ is a natural problem. The first theoretical result appears  in \cite{AdeEnr} which treats the problem in a general settings with no quantitative purpose, then in \cite{andreo_sinai} and \cite{AndDie} particular cases are detailed. Recently more attention has been paid to the one dimensional case: the aim was to provide a parametric estimation of the distribution of the environment with the help of a single trajectory of random walk $\seq{X}$ (\cite{Comets_etal}, \cite{Falc_etal}, \cite{FaGlLo}, \cite{Comets_etal2}). When the environment is a Markov chain a parametric approach can also be found in \cite{AnLoMa}. The problem of non-parametric estimation has been studied in \cite{DieLer:2017}, the aim of the result we present below is to extend their studies to the more delicate case of randomly biased random walks on supercritical Galton-Watson trees.

Indeed, the next step after the model of RWRE for mechanical denaturation of DNA in \citep{Monasson}, is to construct a similar model for mechanical denaturation of RNA. But while DNA molecules are quite linear, RNA molecules present a more complicated geometry with secondary structure (see \cite{GeBuHw}). It seems then natural to describe RNA molecules with trees and to mimic its mechanical denaturation by a random walk in random environment on a tree. 

\smallskip
However, the estimation of the distribution of the environment for a random walk on a tree is much more complicated as we have to study the law of the transitions of a branching random walk and not only a real random variable. Therefore, we only consider here a very particular and simple case; this part of the paper has to be seen as a first step to understand how the methods used for the one-dimensional case can be generalized. 
 So, for all the statistical results in this part and in Section \ref{sec:moments}, we add to \ref{ass1} the following assumptions:
\begin{assu}\label{ass2}$ $
	\begin{itemize}
	\item the reproduction law of the Galton-Watson is bounded: $\exists K>0,\quad \p{\nu\leq K}=1$ .
	\item given the tree up to generation $n$ and the number of children $\nu_x$ of some $x\in\T$ such that $|x|=n$, the variables $\paren{\omega_{x_i}}_{1\leq i\leq \nu_x}$ are i.i.d. with the same distribution as some variable  $\omega$.
\end{itemize}
\end{assu}
Our aim is then to estimate the distribution of $\omega$ given the observation of a single trajectory of the walk $\seq{X}$ up to time $\Tn$. In particular we need $\Tn$ to be finite which is the case by recurrence of $\bf X$. Precisely, we give an estimation of the c.d.f $F$ of 
\begin{align}
\rho:=(1+e^{-\omega})^{-1} \label{defrho}
\end{align}
instead of  the one of $\omega$, but this is of course equivalent  as conversely $\omega=\log [{\rho}/(1-\rho)]$.

\begin{rema}
It is possible to relax the condition on $\nu$: if we suppose that the distribution is not bounded but only subgaussian, we still have the same rates of convergence in the following theorems. However the proof is more technical and for the sake of clarity we have chosen to present only the bounded case.	
\end{rema}

 We need to assume a regularity condition on $F$. For that we use $\gamma$-H\"older seminorms and spaces:
	for any $\gamma\in(0,1]$, the H\"older space $\mathcal{C}^\gamma$ is the set of continuous functions $f:[0,1]\to\R$ such that 
	$$\|f\|_\gamma=\sup_{u\neq v}\frac{|f(v)-f(u)|}{|v-u|^\gamma}<\infty$$ 
	and for $\gamma\in(1,2]$ the H\"older space $\mathcal{C}^\gamma$ is the set of continuously differentiable functions $f:[0,1]\to\R$ such that 
	$$\|f\|_\gamma=\|f'\|_\infty+\sup_{u\neq v}\frac{|f'(v)-f'(u)|}{|v-u|^{\gamma-1}}<\infty.$$

	\smallskip
	
Our first theorem gives rates of convergence for our estimator.
	
\begin{theo}\label{thm:ControlEstFModel}  Assume \ref{ass1}, \ref{ass2} and that the c.d.f. $F$ of $\rho$ is $\gamma$-H\"older  for some $\gamma\in(0,2]$.  There exists an estimator $\est{F}$ of $F$, function of the trajectory $\paren{X_{k}}_{0\leq k\leq \Tn}$,  such that 
	for any $\epsilon>0$,
		$$
	n^{r-\epsilon}\|\est{F}-F\|_\infty \text{\ tends to 0 in $\P^*$-probability where}
	$$
	
	\begin{enumerate}[i)]
\item \label{thun} $r=\frac{\gamma t_0}{\gamma+t_0}$ if $\inf_{s \in [0,1]} \psi(s) < 0$, $\inf_{s \in [0,1]} \psi(s) = 0$, with $\psi'(1) \geq 0$,

\item \label{thdeux} $r=\frac{\gamma\kappa}{\gamma+\kappa}$ if  $\inf_{s \in [0,1]} \psi(s) = 0$, $\psi'(1) < 0$ and  $\kappa \leq 2$,
\item \label{thtrois} $r=\frac{2\gamma}{\gamma+\kappa}$ if  $\inf_{s \in [0,1]} \psi(s) = 0$, $\psi'(1) < 0$ and  $ 2<\kappa\leq2+\gamma$,
\item \label{thquatre} $r=\frac{\gamma}{\gamma+1}$ if  $\inf_{s \in [0,1]} \psi(s) = 0$, $\psi'(1) < 0$ and  $ \kappa>2+\gamma$.
\end{enumerate}
\end{theo}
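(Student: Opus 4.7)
\smallskip\noindent\textbf{Proof plan.}
I would build $\est F$ as a (possibly smoothed) empirical c.d.f.\ of \emph{local} estimators $\est{\rho}_x$, evaluated only at those edges $(x^*,x)$ visited many times by $\bX$ up to time $\Tn$; the heavy range asymptotics of Theorem~\ref{thm1} will give the size of this sample, and the Hölder regularity of $F$ plus Bernstein-type concentration will control the two resulting error terms. Concretely, fix an exponent $\theta$, to be chosen later, and call a vertex $x\in\T$ \emph{good} if $N_x^{\Tn}\geq n^\theta$. The quenched ratio of the numbers of transitions $x^*\to x$ and $x^*\to(x^*)^*$ equals $e^{-\omega_x}$, so its empirical version yields an estimator $\widehat{e^{-\omega_x}}$ and hence $\est{\rho}_x:=1/(1+\widehat{e^{-\omega_x}})$. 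Assumption~\ref{ass2} (bounded offspring) keeps these transition probabilities bounded away from $0$ and $1$, so Bernstein's inequality gives $\max_{x\text{ good}}|\est\rho_x-\rho_x|=O\paren{n^{-\theta/2}\sqrt{\log n}}$ with $\P^*$-probability tending to one. Then I set
\[\est F(t):=\frac{1}{\rang[n^\theta]}\sum_{x\in\T}\mathbf{1}_{\set{N_x^{\Tn}\geq n^\theta}}\,\Phi_h(t-\est\rho_x),\]
where $\Phi_h$ is the indicator $\mathbf{1}_{(-\infty,0]}$ for $\gamma\leq 1$ and a higher-order smoothed c.d.f.\ kernel with bandwidth $h$ for $\gamma>1$, so as to exploit the full Hölder regularity.

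\smallskip\noindent\emph{Decomposition and bounds.} Let $\tilde F(t):=\rang[n^\theta]^{-1}\sum_{x}\mathbf{1}_{\set{N_x^{\Tn}\geq n^\theta}}\mathbf{1}_{\rho_x\leq t}$ be the oracle version with the true $\rho_x$'s. Then
\[\|\est F-F\|_\infty\leq\|\est F-\tilde F\|_\infty+\|\tilde F-F\|_\infty.\]
The first term quantifies the plug-in cost: combining the uniform concentration of the local estimates with the $\gamma$-Hölder regularity of $F$ (and, for $\gamma>1$, the higher-order kernel $\Phi_h$) yields $\|\est F-\tilde F\|_\infty=O(n^{-\gamma\theta+\epsilon})$ for every $\epsilon>0$ in $\P^*$-probability. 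The second term is the statistical fluctuation of a \emph{biased} empirical measure on the selected vertices; Theorem~\ref{thm1} identifies its effective sample size as $\rang[n^\theta]\asymp n^{\xi_\theta}$, and a conditional concentration argument produces $\|\tilde F-F\|_\infty=O(n^{-\xi_\theta+\epsilon})$.

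\smallskip\noindent\emph{Optimization, cases, and main obstacle.} The total rate is optimized by choosing $\theta$ so that $\gamma\theta=\xi_\theta$ and reading off the value of $\xi_\theta$ from Theorem~\ref{thm1}. In case~(i) ($\inf_{s\in[0,1]}\psi(s)<0$, or $\inf\psi=0$ with $\psi'(1)\geq 0$), \eqref{SlowCases} gives $\xi_\theta=t_0(1-\theta)$, so $\theta=t_0/(\gamma+t_0)$ and $r=\gamma t_0/(\gamma+t_0)$. In case~(ii) ($\psi'(1)<0$, $\kappa\leq 2$), \eqref{SubDiffusiveCases} gives $\xi_\theta=\kappa(1-\theta)$ and hence $r=\gamma\kappa/(\gamma+\kappa)$. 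In cases~(iii) and~(iv) ($\psi'(1)<0$, $\kappa>2$), the dominant branch of \eqref{DiffusiveCases} at the balance point is $2-\kappa\theta$ when $2<\kappa\leq 2+\gamma$ (yielding $\theta=2/(\gamma+\kappa)$ and $r=2\gamma/(\gamma+\kappa)$) and $1-\theta$ when $\kappa>2+\gamma$ (yielding $\theta=1/(\gamma+1)$ and $r=\gamma/(\gamma+1)$); the boundary $\kappa=2+\gamma$ is precisely where these two subcases exchange dominance. The delicate step is the bound on $\|\tilde F-F\|_\infty$: the set of good $x$'s is correlated with the environment, as the walk concentrates on favorable subtrees, so the selected $\rho_x$'s are \emph{not} i.i.d.\ from $F$. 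The resolution exploits the independence structure in~\ref{ass2}: conditional on the tree and on all potentials \emph{except} those of a fixed sibling group, the children's $\rho$'s remain i.i.d.\ with c.d.f.\ $F$ and independent of the visit counts to their parent; batching the sum defining $\tilde F$ over sibling groups and applying a conditional Bernstein/martingale concentration then transfers the heavy-range size $n^{\xi_\theta}$ into the claimed rate $n^{-\xi_\theta+\epsilon}$.
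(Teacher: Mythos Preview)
Your approach is genuinely different from the paper's. The paper does \emph{not} form local estimates $\est\rho_x$ and pass to an empirical c.d.f.; instead it estimates the moments $m^{\alpha,\beta}=\e{\rho^\alpha(1-\rho)^\beta}$ directly via the unbiased statistics $\Phi_{\alpha,\beta}(\NT[x^*],\NT)$ built from the negative-binomial identity~\eqref{eq:combfac1}, approximates $F$ by the Bernstein-type functions $F^\alpha$ (Lemma~\ref{lem:biasF}), proves a martingale-difference structure for the increments (Lemma~\ref{lem:Martingaleb}) to get McDiarmid concentration (Lemma~\ref{lem:BorneRisqueF}), and finally selects $\alpha$ adaptively by Goldenshluger--Lepski. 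The role of the heavy range is the same in both approaches: it sets the effective sample size.

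There is, however, a real gap in your plan: the selection bias in $\tilde F$ is not removed by your sibling-group argument. You declare $x$ ``good'' when $N_x^{\Tn}\ge n^\theta$, but given the parent count $N_{x^*}^{\Tn}$, the child count $N_x^{\Tn}$ is negative-binomial with success probability $\rho_x$; large $N_x$ is evidence for \emph{small} $\rho_x$. Hence the event $\{N_x\ge n^\theta\}$ is correlated with $\rho_x$, and the selected $\rho_x$'s are \emph{not} an unbiased sample from $F$, even after conditioning on the tree and on the potentials outside the sibling group. Your proposed conditioning makes the $(\rho_{x_i})$ i.i.d.\ from $F$ and independent of $N_{x^*}$, but it does not decouple them from the selection indicator $\1{N_{x_i}\ge n^\theta}$. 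The paper sidesteps this by selecting on the \emph{parent} count (the factor $\1{i\ge\alpha}$ in $\psi^\ell_\alpha$), summing over \emph{all} children of each selected parent, and then checking that the resulting increments form a martingale difference with respect to a filtration revealed parent-by-parent (Lemma~\ref{lem:Martingaleb}); this is precisely what makes the concentration go through without bias.

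A second, more bookkeeping-level issue: both of your error terms are stated a factor $2$ too strong in the exponent. With $n^\theta$ observations per edge you get $|\est\rho_x-\rho_x|=O(n^{-\theta/2})$, so the plug-in cost through a $\gamma$-H\"older $F$ is $O(n^{-\gamma\theta/2})$, not $O(n^{-\gamma\theta})$; and an empirical c.d.f.\ on $N\asymp n^{\xi_\theta}$ points fluctuates like $N^{-1/2}=n^{-\xi_\theta/2}$, not $n^{-\xi_\theta}$. The two errors cancel in the balance equation $\gamma\theta=\xi_\theta$, which is why your final values of $r$ coincide with the paper's; but the intermediate claims as written are not correct. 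This is exactly the scaling that appears in the paper's risk bound (Theorem~\ref{thm2.3}): a variance term $\sqrt{1/\rang}$ and a bias term $\alpha^{-\gamma/2}$.
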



In the previous theorem, the rate of convergence is given as a function of the parameter $n$. However, as the number of steps for the walk is the true sample size, it is more natural to give a rate depending on $\Tn$. This is done in the following corollary which is a direct consequence of Theorem \ref{thm:ControlEstFModel} and Remark \ref{localtimeroot}.

\begin{coro}
	\label{cor:ControlT}  Assume conditions of Theorem \ref{thm:ControlEstFModel}. The estimator $\est{F}$ is such that 
	for any $\epsilon>0$,
		$$
	\paren{\Tn}^{r-\epsilon}\|\est{F}-F\|_\infty \text{\ tends to 0 in $\P^*$-probability where}
	$$
	
	\begin{enumerate}[i)]
\item \label{corun} $r=\frac{\gamma t_0}{\gamma+t_0}\quad$ if $\inf_{s \in [0,1]} \psi(s) < 0$, $\inf_{s \in [0,1]} \psi(s) = 0$, with $\psi'(1) \geq 0$,

\item \label{cordeux} $r=\frac{\gamma}{\gamma+\kappa}\quad$ if  $\inf_{s \in [0,1]} \psi(s) = 0$, $\psi'(1) < 0$ and  $\kappa \leq 2+\gamma$,
\item \label{corquatre} $r=\frac{\gamma}{2(\gamma+1)}\quad$ if  $\inf_{s \in [0,1]} \psi(s) = 0$, $\psi'(1) < 0$ and  $ \kappa>2+\gamma$.
\end{enumerate}
\end{coro}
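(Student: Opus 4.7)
My plan is to read Corollary~\ref{cor:ControlT} as a mechanical translation of Theorem~\ref{thm:ControlEstFModel} via the asymptotic size of $\Tn$. The two ingredients I will compose are: on the one hand, the conclusion $n^{r-\epsilon}\|\est F-F\|_\infty\to 0$ in $\P^*$-probability with the four exponents $r$ listed in Theorem~\ref{thm:ControlEstFModel}; on the other hand, the three scaling statements of Remark~\ref{localtimeroot}, which in each regime provide an exponent $\beta>0$ such that $\Tn/n^\beta$ converges (in $\P^*$-probability or in $\P^*$-law) to a strictly positive finite random variable. Explicitly, $\beta=1$ in the positive recurrent cases ($\inf\psi<0$, or $\inf\psi=0$ with $\psi'(1)>0$) and also in the boundary case $\psi'(1)=0$ since the additional $\log n$ factor disappears on the logarithmic scale; $\beta=\kappa$ when $\inf\psi=0$, $\psi'(1)<0$, $\kappa\leq 2$; and $\beta=2$ when $\inf\psi=0$, $\psi'(1)<0$, $\kappa>2$ (the case $\kappa=2$ being again unaffected at the logarithmic scale). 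A quick arithmetic check shows that in each regime the corollary's exponent equals $r/\beta$: this is how cases~\ref{thdeux} and~\ref{thtrois} of the theorem collapse into the single case~\ref{cordeux} of the corollary via the identities $(\gamma\kappa/(\gamma+\kappa))/\kappa=\gamma/(\gamma+\kappa)$ and $(2\gamma/(\gamma+\kappa))/2=\gamma/(\gamma+\kappa)$, and case~\ref{thquatre} becomes case~\ref{corquatre} via $(\gamma/(\gamma+1))/2=\gamma/(2(\gamma+1))$.

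The mechanism that transports a rate from $n$ to $\Tn$ is then the following. Since $\Tn/n^\beta$ has a tight nondegenerate $\P^*$-limit (or limit in law), one has $\log\Tn/\log n\to\beta$ in $\P^*$-probability. Given $\epsilon'>0$, I will pick $\delta>0$ and then $\epsilon>0$ small enough that $(\beta+\delta)(r'-\epsilon')\leq r-\epsilon$, where $r'$ is the exponent from the corollary and $r=\beta r'$ is the corresponding one from the theorem. On the high-$\P^*$-probability event $\{\Tn\leq n^{\beta+\delta}\}$ one then has
\[
(\Tn)^{r'-\epsilon'}\|\est F-F\|_\infty\ \leq\ n^{(\beta+\delta)(r'-\epsilon')}\|\est F-F\|_\infty\ \leq\ n^{r-\epsilon}\|\est F-F\|_\infty,
\]
and the right-hand side tends to $0$ in $\P^*$-probability by Theorem~\ref{thm:ControlEstFModel}. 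A standard $\eta/2+\eta/2$ splitting of the probability mass then yields the desired convergence on $\P^*$.

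I do not expect any serious obstacle; the only step that deserves a careful line in the write-up is that in Remark~\ref{localtimeroot}(\ref{deux}) the stated convergence is only in law, but convergence in law to a strictly positive finite random variable is nevertheless enough to imply $\log\Tn/\log n\to \min(\kappa,2)$ in $\P^*$-probability, which is all the argument uses. Beyond that, the proof is a rate-conversion combined with the arithmetic verification above, and no further probabilistic input is needed.
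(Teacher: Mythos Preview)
Your proposal is correct and follows exactly the approach the paper intends: the paper simply states that the corollary ``is a direct consequence of Theorem~\ref{thm:ControlEstFModel} and Remark~\ref{localtimeroot}'' without further detail, and your write-up supplies precisely that missing computation, including the correct arithmetic collapsing cases~\ref{thdeux}--\ref{thtrois} into~\ref{cordeux} and the observation that convergence in law to a positive finite limit suffices for $\log\Tn/\log n\to\beta$ in probability.
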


The corollary shows that our best rate is obtained in the limit case $t_0=1$. It seems to be the best compromise between the number of visited sites and the number of times most of the sites are visited.
Comparing our result on a tree to the recurrent one dimensional random walk of \cite{DieLer:2017}, we remark that the error rates are really different. Indeed, in their paper, the time of observation used instead of $\Tn$ is $\tau_n$, the first time the coordinate $n$ is reached by the walk and Corollary 2 in \cite{DieLer:2017} gives for the recurrent case an error rate of the order $\log \log \tau_n / \log \tau_n$. 
This is very large compared to what is obtained here in Corollary \ref{cor:ControlT}, so recurrent walks on the tree naturally yield a better rate for the error. The reason comes from the range of the different walks: for the one dimensional recurrent case very few coordinates are visited before time $n$, around $(\log n)^2$, whereas (see Theorem 1) for the walk on the tree the range is much larger (of the order of a power of $n$).

\smallskip

We now give a more explicit description of $\est{F}$. For this purpose, we introduce the following family of estimators
\begin{equation}\label{eq:estF}
\est{F}^\alpha(u):=\frac{1}{\rang\e{\nu}}\sum_{x\in\T}\psi^{\lfloor \alpha u\rfloor}_{\alpha}\paren{\NT[x^*],\NT[x]}
\end{equation}
where 
\begin{align*}
	\psi^{l}_{\alpha}(i,j)&:=\frac{\1{i\geq \alpha}}{\binom{i-1+j}{\alpha-1}}\sum_{k=0}^{l-1}\binom{i-1}{k}\binom{j}{\alpha-1-k}\enspace,
\end{align*}
using the conventions $0/0=0$ and $\binom{n}{k}=0$ if $0\leq n< k$. The logic behind the definition of the $\est{F}^\alpha$ will become clear in Section \ref{sec:moments}. The following theorem shows that the $(\est{F}^\alpha)_{n}$ are estimators of the c.d.f. $F$ with a risk bound depending on the heavy range $\rang$.

\begin{theo}\label{thm2.3}  Assume \ref{ass1}, \ref{ass2}  and that the c.d.f. $F$ of $\rho$ is in $\mathcal{C}^\gamma$ for some $\gamma\in(0,2]$.  
	Then for any integers $\alpha ,n\geq 1$, and any real $z>0$, we have
	\begin{align*}
	\p{\|\est{F}^{\alpha}-F\|_\infty\geq \frac{K}{\e{\nu}}\sqrt{\frac{z+\log \alpha +2\log \rang}{2\rang}}+\frac{2\|F\|_\gamma}{\alpha^{\gamma/2}}}\leq Ce^{-z}\enspace.
        \end{align*}
\end{theo}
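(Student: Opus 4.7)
The plan is to introduce the quenched mean $\bar F^\alpha(u):=\Ee[\est{F}^\alpha(u)]$ and split $\|\est F^\alpha-F\|_\infty\leq \|\est F^\alpha-\bar F^\alpha\|_\infty+\|\bar F^\alpha-F\|_\infty$ into a stochastic (concentration) and an environment-deterministic (bias) piece.

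For the bias I would first recognise $\psi^l_\alpha(i,j)$ as the hypergeometric CDF $\P(H<l)$ with $H\sim\mathrm{Hyp}(i-1+j,i-1,\alpha-1)$. Reversibility of the quenched walk on the two-vertex subset $\{x^*,x\}$ yields a Pólya-urn representation of the successive visits to that edge whose stationary weight ratio equals $\rho_x=(1+e^{-\omega_x})^{-1}$; this should produce, on the event $\NT[x^*]\geq\alpha$, the identity $\Ee[\psi^l_\alpha(\NT[x^*],\NT[x])]=\P(\mathrm{Bin}(\alpha-1,\rho_x)\leq l-1)$. Since integrating a binomial CDF against the law of $\rho$ is an $\alpha^{-1/2}$-scale smoothing of $F$, the standard Bernstein-type estimate gives $|\E_{\rho\sim F}[\P(\mathrm{Bin}(\alpha-1,\rho)\leq l-1)]-F(l/\alpha)|\leq \|F\|_\gamma\alpha^{-\gamma/2}$ for $F\in\mathcal{C}^\gamma$. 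Averaging over the $\rang\e{\nu}$ (in expectation) children of heavy vertices, whose $\rho_x$ are i.i.d.\ $\sim F$ under \ref{ass2}, and using the normalisation $1/(\rang\e{\nu})$ then yields the uniform bias bound $\|\bar F^\alpha-F\|_\infty\leq 2\|F\|_\gamma\alpha^{-\gamma/2}$.

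For the concentration I would group contributions by heavy parent: with $S_p:=\sum_{x:x^*=p}\psi^l_\alpha(\NT[p],\NT[x])\in[0,K]$ by \ref{ass2}, one has $\est F^\alpha(u)=(\rang\e{\nu})^{-1}\sum_{p\text{ heavy}}S_p$, so each heavy parent contributes at most $K/(\rang\e{\nu})$. Conditional on $\{\rang=R\}$ and on the environment, an Azuma--Hoeffding argument along a filtration that reveals the subtrees one at a time (using that, given $\NT[p]$ and the environment, the sub-excursions into distinct children of $p$ are conditionally independent) gives
$$\P\!\left(|\est F^\alpha(u)-\bar F^\alpha(u)|\geq \frac{K}{\e{\nu}}\sqrt{\frac{z+2\log R}{2R}}\;\middle|\;\rang=R\right)\leq 2R^{-2}e^{-z}.$$
A union bound over the $\alpha+1$ distinct values of $\lfloor\alpha u\rfloor$ converts $z$ into $z+\log\alpha$, and summing over $R\geq 1$, using $\sum_R R^{-2}<\infty$, removes the conditioning while absorbing the $2\log R$ correction into the final constant $C$; combined with the bias bound this produces the claim.

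The main obstacle is the concentration step: the $S_p$ are genuinely dependent through the walk dynamics, and implementing Azuma requires verifying the per-step bound $K/(R\e{\nu})$ uniformly in the environment along the chosen filtration, which is why the subtree-by-subtree filtration (rather than excursion-by-excursion) seems preferable --- it keeps each increment localised to a single $S_p$. A secondary algebraic subtlety, underlying both the definition of the estimator and the appearance of $\rho=(1+e^{-\omega})^{-1}$ as the natural parameter, is the exact quenched Pólya-urn identity for $\Ee[\psi^l_\alpha(\NT[x^*],\NT[x])]$ that makes the bias step clean.
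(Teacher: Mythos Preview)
Your high-level architecture (bias plus concentration, Azuma along a vertex-by-vertex filtration, union bound over the $\alpha+1$ values of $\lfloor\alpha u\rfloor$ and over the possible values of $\rang$) matches the paper's. The quenched identity $\Ee[\psi^l_\alpha(i,\NT[x])\mid \NT[x^*]=i]=\P(\mathrm{Bin}(\alpha-1,\rho_x)\le l-1)$ you invoke is also correct, and the BFS ``subtree-by-subtree'' filtration you propose is exactly the one the paper uses.

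The gap is in the choice of centering. You define $\bar F^\alpha=\Ee[\est F^\alpha]$, a \emph{quenched} mean, and then claim $\|\bar F^\alpha-F\|_\infty\le 2\|F\|_\gamma\alpha^{-\gamma/2}$ deterministically. But under $\Pe$ the $\rho_x$ are fixed constants, so $\bar F^\alpha$ depends on the particular realisation of the environment (and, since $\rang$ is random even under $\Pe$, on the expectation of a ratio). The sentence ``whose $\rho_x$ are i.i.d.\ $\sim F$ under \ref{ass2}'' mixes the quenched and annealed pictures: you cannot both freeze the environment for the concentration step and integrate it out for the bias step on the \emph{same} centering. With your $\bar F^\alpha$, the bias term is a random variable comparing an empirical average of $\P(\mathrm{Bin}(\alpha-1,\rho_x)\le\cdot)$ over children of heavy parents to $F$; bounding it requires its own concentration argument (and would re-introduce the dependence you were trying to isolate). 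There is also a secondary issue: the actual number of children of heavy vertices is $\sum_{p\text{ heavy}}\nu_p$, not $\rang\E[\nu]$, so even after the quenched conditional identity the centering does not sit exactly at $\bar F^\alpha$.

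The paper sidesteps this by working \emph{annealed} throughout. The centering is the deterministic function $F^\alpha(u)=\sum_{k<\lfloor\alpha u\rfloor}\binom{\alpha-1}{k}m^{k,\alpha-1-k}=\E_{\rho\sim F}[\P(\mathrm{Bin}(\alpha-1,\rho)<\lfloor\alpha u\rfloor)]$, whose approximation of $F$ is the purely analytic Bernstein-type Lemma~\ref{lem:biasF}. The key device for concentration is that, under $\P$, the marked tree $(x,\NT[x])$ is a multi-type Galton--Watson tree (equation~\eqref{lemAdR}): given $\NT[y_k]=i\ge\alpha$, the annealed law of $\sum_{y:y^*=y_k}\psi^\ell_\alpha(i,\NT[y])$ integrates simultaneously over $\nu_{y_k}$ and the i.i.d.\ $\rho$'s of the children, yielding conditional mean exactly $\E[\nu]F^\alpha(\ell/\alpha)$. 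Thus $Y^{\alpha,\ell}_k=\mathbf 1_{\{\NT[y_k]\ge\alpha\}}\big(\sum_{y:y^*=y_k}\psi^\ell_\alpha(\NT[y_k],\NT[y])-\E[\nu]F^\alpha(\ell/\alpha)\big)$ is a $\P$-martingale difference bounded by $K$, and McDiarmid plus the two union bounds finish. In short, your filtration is right but your centering must be the annealed $F^\alpha$, and the martingale property must be checked under $\P$ via the multi-type Galton--Watson structure, not under $\Pe$.
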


Theorem \ref{thm2.3} shows that the random part of the error rate of our estimator is a function of the heavy range introduced in the previous subsection. To obtain the random optimal $\est{F}^{\alpha}$ for the estimation of $F$, a compromise must be done between considering sites which have been sufficiently visited, that is choosing a large $\alpha$, and considering a sufficiently large number of sites, that is choosing $\alpha$ small enough. 
Remark that, in this theorem, we work with probability $\P$ and not $\P^*$. Theorem \ref{thm:ControlEstFModel} and \ref{thm2.3} will be proved in Section \ref{sec:moments}. 

\subsection{Overview of the proofs}

  The proof of Theorem \ref{thm1} is divided into two parts : an upper bound given in Proposition \ref{proprange}  and  lower bounds given in Propositions \ref{proprange0} and  \ref{proprange0bis}. Propositions \ref{proprange0} explains the contribution of the first generations of the tree, while \ref{proprange0bis} describes the contribution of larger generations. 
For all these bounds, arguments are adapted to the hypothesis done on the environment. More specifically important differences appear whether the largest generation visited by the walk before $n$ returns to $e^*$, $ X^*_{\Tn}$, is small that is typically a power of $\log n$  or larger, that is typically a power of $n$. These behaviors are resumed in Lemma \ref{asympbeh}.  

The upper bound of Proposition \ref{proprange} is  obtained by an estimation of the mean of the heavy range $\rang[n^\theta]$ and by Markov's inequality. The most technical part  is the study of the case $X^*_{\Tn}  \sim n^p$ (Case 2 in the proof of Proposition \ref{proprange}): indeed for this case, some sites with  large potential which are far from the root are of major importance. It appears that these sites are essentially visited during a single excursion from $e^*$ to $e^*$ (contribution $\sum_{2,2}$ in the proof) but that the walk is trapped a long time in their neighborhood. Then thanks the many-to-one Lemma  (Lemma \ref{manytoone} in Section \ref{secEnv}) the central issue is to study a random walk $\seq{S}$ on $\Z$ and especially a related random variable $H^S_.$ for which the tail distribution function is estimated in Lemma \ref{compHS} in Section \ref{secEnv}.

For the lower bound, which is more delicate to obtain, we decompose the proof into two Propositions. Proposition \ref{proprange0} is dedicated to the contribution of the first generations of the tree, that is sites $x$ such that $|x|\leq (\log n)^3$. It works for every cases (slow and fast) but is not always optimal for the fast ones. It is based on the idea that any sites $x$ close to the root such that $\max_{u \leq x} V(u) \leq (1- \theta- \delta) \log n$ for a small $\delta>0$ satisfy, with a high probability  $\NT\geq n^{\theta}$ (see \eqref{eqAn}). This fact is obtained by a precise estimate of the Laplace transform of $\NTi{x}{1}$. Then it remains to show that the cardinal of the set $\{x \in \T, \ |x| \leq (\log n)^3, \overline{V}(x) \leq (1- \theta- \delta) \log n \}$ is larger than $n^{t_0(1- \theta- 2\delta)}$. This part is the object of Proposition \ref{propenv1}. \\
Proposition \ref{proprange0bis} completes the estimation of Proposition  \ref{proprange0} for the fastest cases. Indeed in these cases, the random walk can visit generations of order a power of $n$ before the $n^\text{th}$ return to $e^*$ and can be trapped a long time in some sites of these generations. More precisely in this proposition, we consider the vertices on the tree such that the variable $ H_x:=\sum_{y\leq x}e^{V(y)-V(x)}$  is of order $n^{\theta}$. The reason for that is the same as above: whenever the walk touches such an $x$, its local time at this vertex will be, with a large probability, larger than $n^{\theta}$. \\
A first step in the proof is 
 to introduce time independence, under quenched measure $\Pe$, as those $x$ are visited only during a single excursion between two returns to $e^*$ (cf. Lemma \ref{eqindep}). This independence allows to apply Tchebytchev inequality : see Lemma \ref{lemtcheb}. From this lemma, it appears that two random variables which depend only on the environment have to be controlled. And thanks to concentration Lemma \ref{lemconcen}, a control depending only on their means and variances can be obtained. These quantities are computed in Lemma \ref{lemGn} and Lemma \ref{binfmoy}.


\smallskip

  The statistical results are obtained by the estimation of the moments of $\rho=(1+e^{-\omega})^{-1}$. In Proposition \ref{thm:BorneRisqueGW}, we give a risk bound for an estimator of the moments  $m^{\alpha,\beta}:=\e{\rho^{\alpha}(1- \rho)^{\beta}}$. Remark that the randomness of this bound depends only on the heavy range.
 Then, thanks to the H\"older regularity of $F$, this function can be approximated by the family of functions $u\to F^\alpha(u)=\sum_{k=0}^{\lfloor\alpha u\rfloor-1}\binom{\alpha-1}{k}m^{k,\alpha-1-k}$ (see Lemma \ref{lem:biasF}). We can therefore use (a slight variation of) the estimators introduced in Proposition \ref{thm:BorneRisqueGW} to construct estimators of $F^\alpha$ and give a risk bound, this is done in Lemma \ref{lem:BorneRisqueF}. Theorem \ref{thm2.3} follows directly while the last section explains how to obtain Theorem \ref{thm:ControlEstFModel}.

%

\smallskip

The rest of the paper is organized as follows. In Section \ref{secEnv} we present tools related to the environment together with estimates for potential $V$ that will be used in Section \ref{secSR}. More specifically we study the number of vertices with low potential which have a great importance for the slow cases. Section \ref{secSR} is the heart of the paper, we study the heavy range and prove Theorem \ref{thm1}. This section is decomposed into two subsections the first one deals with an upper bound for the heavy range and the second one with a lower bound. Note that the proof of the lower bound is more technical and need in particular to estimate the fluctuations of the environment, these estimates are more complex for the sub-diffusive and diffusive cases than for the slow cases. Then, in Section \ref{sec:moments}, we prove Theorem \ref{thm:ControlEstFModel} on the non-parametric estimation problem of the environment.

Finally, throughout the paper, the letter $C$ stands for a universal constant which value can change from line to line.

\section{Preliminaries on the environment}\label{secEnv}

In this section we present results related to the environment which are used in Section \ref{secSR}. 
 Like in the whole paper, we assume in this section that conditions \ref{ass1} are valid.  In a first subsection we give useful technical lemmata and in a second one, we state and prove a result concerning  the number of vertices with low potential $V$.
 
 Let us begin with some notations specific to the environment. For $u\in\T$, we denote by $V_u$ the environment centered at $u$: for any $x\geq u$, $V_u(x):=V(x)-V(u)$. We also denote the maximum of $V_u$ between $u$ and $x$ by $\overline{V_u}$ and the minimum of $V$ between $e$ and $x$ by $\underline{V_u}$:
\begin{align}
\overline{V_u}(x):=\max_{u\leq y\leq x} V(y)\quad \text{ and }\quad \underline{V_u}(x):=\min_{u\leq y\leq x} V(y). \label{VmVM}
\end{align}
We write $\VM$ for $\overline{V_e}$ and $\Vm$ for $\underline{V_e}$. Remark that if $u$ and $v$ are different vertices of the same generation $\ell$, then, given $\T_\ell$, $\paren{V_u(x)}_{x\geq u}$ and $\paren{V_v(x)}_{x\geq v}$ are i.i.d. and distributed as $V$ under $\P$.

\subsection{Technical estimates}
We first recall the many-to-one Formula (see \cite{Shi2015} Chapter 1, and \cite{HuShi10b} equation 2.1). It will be used several times in the paper to compute different expectations related to the environment.
\begin{lemm}[Many-to-one Formula]\label{manytoone}
For any integer $m$ and any $t>0$,
	\begin{align*}
\E\Big[\sum_{|x|=m}f(V(y),e \leq y \leq x)\Big]= \e{e^{tS_m+\psi(t) m }f(S_i,0 \leq i \leq m)}.  
\end{align*}
where $\seq{S}$ is the random walk starting at 0 and such that the increments $\paren{S_{n+1}-S_n}_{n\in\N}$ are i.i.d. and for any measurable function $h:\R^m\to [0,\infty)$, 
$$
\e{h(S_1)}=e^{-\psi(t)}\e{\sum_{|x|=1}e^{-tV(x)}h(V(x))}.
$$
\end{lemm}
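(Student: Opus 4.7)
The plan is to proceed by induction on $m$. The identity is essentially a statement of the Esscher transform, and the definition of the random walk $\seq{S}$ is precisely the one-generation instance of the many-to-one formula. Everything else amounts to iterating this one-generation change of measure using the branching property.

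\emph{Base case $m=1$.} I would apply the defining relation of $\seq{S}$ with the test function $h(v)=e^{tv+\psi(t)}f(0,v)$. This gives
$$\e{e^{tS_1+\psi(t)}f(0,S_1)}=\E\Big[\sum_{|x|=1}e^{-tV(x)}\cdot e^{tV(x)+\psi(t)}\cdot e^{-\psi(t)}f(0,V(x))\Big]=\E\Big[\sum_{|x|=1}f(0,V(x))\Big],$$
which is the claim since $V(e)=0=S_0$.

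\emph{Inductive step $m\to m+1$.} I would condition on the first-generation information $\Ecal_1=\{\T_1,(V(u):|u|=1)\}$. By the branching property of the pair $(\T,V)$, for each vertex $u$ with $|u|=1$ the subtree $\T^u$ rooted at $u$, equipped with the shifted potential $V(\cdot)-V(u)$, is independent of $\Ecal_1$ and distributed as $(\T,V)$ under $\bP$. Splitting the sum over generation $m+1$ according to the ancestor at generation $1$,
$$\sum_{|x|=m+1}f(V(y):e\leq y\leq x)=\sum_{|u|=1}\sum_{\substack{x>u\\ |x|=m+1}}f_u\bigl(V(y)-V(u):u\leq y\leq x\bigr),$$
where $f_u(v_0,\ldots,v_m):=f(0,V(u),V(u)+v_1,\ldots,V(u)+v_m)$ absorbs the path prefix $(0,V(u))$ into the function. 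Taking the conditional expectation given $\Ecal_1$ and applying the induction hypothesis to each subtree yields
$$\E\Big[\sum_{|x|=m+1}f\ \Big|\ \Ecal_1\Big]=\sum_{|u|=1}\e{e^{tS_m+\psi(t)m}f_u(S_0,\ldots,S_m)}.$$
Taking expectations and applying the base case to the outer sum over $|u|=1$ (with an auxiliary independent walk $\widetilde S$ replacing $S$ inside $f_u$), then using stationarity and independence of the increments of $\seq{S}$ to identify $V(u)+\widetilde S_k$ with $S_{k+1}$ in distribution, I would recover $\e{e^{tS_{m+1}+\psi(t)(m+1)}f(S_0,\ldots,S_{m+1})}$.

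The main obstacle is purely notational: correctly reindexing the path of potentials in the subtree rooted at $u$ so that the prefix $(V(e),V(u))$ already seen at generation $1$ is reinserted as the initial segment of the argument of $f$ before invoking the inductive hypothesis. No further probabilistic input beyond the branching property and the very definition of $\seq{S}$ is required.
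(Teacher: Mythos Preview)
Your induction argument is correct and is the standard proof of the many-to-one formula. The paper does not actually prove this lemma: it merely recalls the statement and cites \cite{Shi2015} (Chapter~1) and \cite{HuShi10b} (equation~2.1), so there is nothing to compare against beyond noting that your approach is the classical one found in those references.
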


The following lemma deals with a key random variable which appears in the study of the heavy range (via the edge local time of random walk $\seq{X}$, see below \eqref{sig21}) after that many-to-one Formula is applied.

\begin{lemm}\label{compHS}Assume $\psi(1)=0$ and $\psi'(1)<0$ and that the parameter $\kappa$ defined in \eqref{defkappa} is finite. Let us consider the random walk $\seq{S}$ of the many-to-one Lemma with $t=1$ and define for any $\ell\geq0$, the random variable: 
\begin{align}
	H_\ell^S:=\sum_{k=0}^\ell e^{S_k-S_\ell}\ .
\end{align}
 
Then, we can find three constants $C\geq c$ and $A>0$ such that
$$
\forall m\geq1,\ \forall \ell\in\N,\quad \p{H_\ell^S\geq m}\leq \frac{C}{m^{\kappa-1}},
$$
and 
$$
\forall m\geq1,\ \forall \ell\geq A\log m,\quad \p{H_\ell^S\geq m}\geq \frac{c}{m^{\kappa-1}}\ .
$$
\end{lemm}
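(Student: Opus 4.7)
The plan is to use a time-reversal identity to reduce $H_\ell^S$ to a partial sum of exponentials of a random walk with positive drift, then to handle the upper and lower bounds separately via, respectively, Goldie's theorem on perpetuities and Cram\'er's exponential tilting. As the increments of $\seq{S}$ are i.i.d., setting $j=\ell-k$ yields $H_\ell^S\stackrel{\mathcal{L}}{=}\sum_{j=0}^\ell e^{-S_j}$. By the many-to-one identity at $t=1$, $\E[S_1]=\E[\sum_{|z|=1}V(z)e^{-V(z)}]=-\psi'(1)>0$, so $S_j\to+\infty$ almost surely, $H_\infty:=\sum_{j=0}^\infty e^{-S_j}$ is a.s.\ finite, and $H_\ell^S$ is stochastically dominated by $H_\infty$ uniformly in $\ell$.

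For the upper bound I would exploit the stochastic fixed-point identity $H_\infty\stackrel{\mathcal{L}}{=}1+e^{-S_1}H_\infty'$, where $H_\infty'$ is an independent copy of $H_\infty$. The exponent $\kappa-1$ is forced by $\E[e^{-(\kappa-1)S_1}]=\E[\sum_{|z|=1}e^{-\kappa V(z)}]=e^{\psi(\kappa)}=1$, while the assumption \eqref{hyppsineg} translates exactly into Goldie's logarithmic moment condition $\E[e^{-(\kappa-1)S_1}\max(-S_1,0)]<\infty$. The constant term $1$ trivially has a finite $(\kappa-1)$-moment. Goldie's implicit renewal theorem for perpetuities then yields $\P(H_\infty\geq m)\leq Cm^{-(\kappa-1)}$ and hence the upper bound of the lemma.

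For the lower bound, the crude inequality $\sum_{j=0}^\ell e^{-S_j}\geq e^{M_\ell}$ with $M_\ell:=\max_{0\leq j\leq \ell}(-S_j)$ reduces the claim to $\P(\tau_{\log m}\leq \ell)\geq c\,m^{-(\kappa-1)}$ for $\ell\geq A\log m$, where $\tau_x:=\inf\{j:-S_j\geq x\}$. I would tilt $\P$ by the martingale $e^{-(\kappa-1)S_n}$: under the resulting measure $\tilde{\P}$, a direct computation gives $\tilde{\E}[S_1]=-\psi'(\kappa)<0$ (strict because $\kappa>1$ is the second zero of the convex function $\psi$ with $\psi'(1)<0$), so $-S$ has positive drift $\psi'(\kappa)$. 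The law of large numbers gives $\tau_x/x\to 1/\psi'(\kappa)$, and elementary renewal-theoretic arguments (using $\tilde{\E}[|S_1|]<\infty$, again a consequence of \eqref{hyppsineg}) show tightness of the overshoot $-S_{\tau_x}-x$. Thus, for $A>1/\psi'(\kappa)$ and an appropriate $c_0>0$, one has $\tilde{\P}(\tau_x\leq Ax,\,-S_{\tau_x}\leq x+c_0)\geq c_1>0$ for $x$ large, and the optional stopping formula
$$\P(\tau_x\leq Ax)=\tilde{\E}\bigl[e^{(\kappa-1)S_{\tau_x}}\1{\tau_x\leq Ax}\bigr]\geq e^{-(\kappa-1)(x+c_0)}\,c_1$$
gives the desired lower bound upon setting $x=\log m$.

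The main hurdle is the invocation of Goldie's theorem for the upper bound at the sharp exponent $\kappa-1$: the required inputs ($\psi(\kappa)=0$ and \eqref{hyppsineg}) are precisely those encoded in the hypotheses, but some care is needed if the increment distribution of $S_1$ is arithmetic. The lower bound is conceptually more elementary but still asks for attention to the overshoot $-S_{\tau_x}-x$ at the first passage time.
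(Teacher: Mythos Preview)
Your proposal is correct. The time-reversal identity and the upper bound via the tail of the infinite perpetuity $H_\infty=\sum_{j\geq0}e^{-S_j}$ match the paper exactly; the paper cites Kesten--Kozlov--Spitzer (lattice case) and Grincevi\v cius (non-lattice case) where you invoke Goldie, which also disposes of the arithmetic caveat you flag. The lower bound, however, is handled differently: the paper does not use the maximum $M_\ell$ or Cram\'er tilting at all, but simply recycles the same classical result to get $\P(H_\infty\geq \tfrac32 m)\geq c\,m^{-(\kappa-1)}$ and then shows that the remainder $\sum_{k>\ell}e^{-S_k}$ exceeds $m/2$ with probability $o(m^{-(\kappa-1)})$ when $\ell\geq A\log m$, via a union bound and Markov's inequality on each term $e^{-S_k}$, exploiting $\psi((\kappa+1)/2)<0$. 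This is shorter and avoids any overshoot analysis. Your route through $\sum_{j\leq\ell} e^{-S_j}\geq e^{M_\ell}$ and exponential tilting is more self-contained---it does not need the lower-tail half of the perpetuity theorem---but it costs you the renewal control of the overshoot $-S_{\tau_x}-x$, which, while standard, is extra work.
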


\begin{proof}
 By definition of $S_1$,
 $\e{- {S}_1}=\psi'(1)<0\text{ and }\e{e^{ -(\kappa-1) {S}_1}}=e^{\psi(\kappa)}=1.$
 According to Lemma 1 in \cite{KesKozSpi} for the lattice case and to Theorem 2 in \cite{Grince76} for the non-lattice case, $\sum_{j=0}^{+ \infty} e^{-S_j}$ is $\P$-a.s. finite and there are some constants $c,C$, such that for $m\geq1$,
$$\frac{c}{m^{\kappa-1}}\leq \P\Big(\sum_{j=0}^{+ \infty} e^{-S_j}\geq m\Big) \leq \frac{C}{m^{\kappa-1}}\ .$$
As for any $\ell\in\N$, $H^S_\ell \egloi\sum_{k=0}^\ell e^{-S_k}$ where $\egloi$ stands for the equality in law, this leads directly to the upper bound of the lemma. For the lower bound, we remark that 
$$
\p{H_\ell ^S\geq m}\geq \P\Big(\sum_{k=0}^\infty e^{-S_k}\geq \frac32 m\Big)-\P\Big(\sum_{k=\ell+1}^\infty e^{-S_k} > \frac m2\Big).
$$
And we only have to prove that for $\ell$ large enough, $\p{\sum_{k=\ell+1}^\infty e^{-S_k}> m/2}$ is negligible compared to $1/m^{\kappa-1}$. Indeed, consider the event $A_\ell:=\{ \forall k \geq \ell+1, e^{-S_k} \leq \frac{m}{2k^2}\}$. On $A_\ell$, 
\begin{align*}
\sum_{k={\ell+1}}^{\infty} e^{-S_k} \leq \frac{m}2 \sum_{k={\ell+1}}^{ \infty} \frac1{k^{2}} \leq \frac{m}{2\ell}.
\end{align*}
Therefore, Markov inequality yields
\begin{align*}
& \P\Big(\sum_{k=\ell+1}^\infty e^{-S_k} >  m/2 \Big) \leq \p{\overline{A}_\ell} = \P\Big( \bigcup_{k \geq \ell+1} \set{e^{-S_k} > \frac{m}{2k^2}}\Big)\\
\leq& \sum_{k \geq \ell+1} \P\Big( e^{-(\kappa-1) S_k/2} > \paren{\frac{m}{2k^2}}^{(\kappa-1)/2}\Big) \\
  \leq& \paren{\frac{2}{m}}^{(\kappa-1)/2}\sum_{k \geq \ell+1} k^{\kappa-1}e^{ k \psi(\frac{\kappa+1}{2})} \leq C m^{-(\kappa-1)/2}  \ell^{\kappa-1}e^{ \ell \psi(\frac{\kappa+1}{2})}, 
\end{align*}
As $\psi(\frac{\kappa+1}{2})<0$, we can find a constant $A$ such that for $\ell\geq A \log m$,  the above expression is $o(m^{-(\kappa-1)})$. This concludes the proof of the lemma.
\end{proof}

We give now a  control of the maximum of $V$ at the very first generations of the tree. Recall that $\P^*$ is the probability $\P$ conditioned on the survival of the tree.
\begin{lemm}\label{ctrlptgen}
	For any $\delta>0$, we can find two positive constants $\epsilon,b_1$ such that for $n$ large enough,
\begin{align*}
\P^*\Big(\max_{|u|\leq \epsilon \log n} |V(u)| \geq \delta \log n\Big) \leq n^{-b_1}\ . 
\end{align*}
\end{lemm}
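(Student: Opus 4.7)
The plan is a first-moment Chernoff argument applied generation by generation, combined with a union bound over the first $\epsilon\log n$ generations. Since the tree is supercritical, $\P(\text{survival})>0$, and so $\P^*(A)\leq \P(A)/\P(\text{survival})\leq C\,\P(A)$ for any event $A$; it therefore suffices to prove the bound under the annealed measure $\P$.

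For the upper tail, I would use that the many-to-one identity (applied with $f\equiv 1$) gives $\E\bigl[\sum_{|u|=m}e^{tV(u)}\bigr]=e^{\psi(-t)m}$ for any $t$ such that $-t$ lies in the domain of $\psi$. Pick $t\in(0,r_1]$, which by Assumption \ref{ass1} lies in the domain. Then Markov's inequality yields
\begin{align*}
\P\Big(\max_{|u|=m}V(u)\geq \delta\log n\Big)
\leq \E\Big[\sum_{|u|=m}\mathbf{1}_{V(u)\geq \delta\log n}\Big]
\leq e^{-t\delta\log n+\psi(-t)m}.
\end{align*}
Symmetrically, for the lower tail, pick some $s\in(0,1+r_2]$ (in the domain of $\psi$ by Assumption \ref{ass1}) to get
\begin{align*}
\P\Big(\min_{|u|=m}V(u)\leq -\delta\log n\Big)
\leq e^{-s\delta\log n+\psi(s)m}.
\end{align*}

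Now fix $t$ and $s$ as above; then $\psi(-t)$ and $\psi(s)$ are finite constants. Choose $\epsilon>0$ small enough that
\begin{align*}
\epsilon\,\psi(-t)\leq \tfrac{1}{2}t\delta \quad\text{and}\quad \epsilon\,\psi(s)\leq \tfrac{1}{2}s\delta.
\end{align*}
Then for every $m\leq \epsilon\log n$, both bounds above are at most $n^{-c}$ with $c:=\tfrac{1}{2}\min(t\delta,s\delta)>0$. A union bound over the $\lfloor \epsilon\log n\rfloor+1$ generations gives
\begin{align*}
\P\Big(\max_{|u|\leq \epsilon\log n}|V(u)|\geq \delta\log n\Big)
\leq 2(\epsilon\log n+1)\,n^{-c},
\end{align*}
which is bounded by $n^{-b_1}$ for any $b_1\in(0,c)$ and $n$ sufficiently large. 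Passing to $\P^*$ only multiplies by a constant, concluding the proof.

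There is essentially no serious obstacle here; the argument is a routine exponential Markov estimate. The two points requiring care are (i) checking that the Chernoff parameters $t,s$ can be taken in the interior of the domain of $\psi$, which is exactly what Assumption \ref{ass1}~\eqref{hyp0} provides, and (ii) choosing $\epsilon$ small enough to absorb the potentially positive values of $\psi(-t)$ and $\psi(s)$ so that the linear-in-$\delta\log n$ gain dominates. The logarithmic union-bound factor is harmless since we can always shrink $b_1$ slightly below $c$.
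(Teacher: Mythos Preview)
Your argument is correct and follows essentially the same route as the paper's proof: a first-moment exponential Markov bound on $\sum_{|u|=m}\mathbf{1}_{\pm V(u)\geq \delta\log n}$, summed over the first $O(\log n)$ generations, and then passage from $\P$ to $\P^*$ via the positivity of the survival probability. The only cosmetic difference is that the paper fixes the Chernoff exponents to $r_1$ (upper tail) and $t_0$ (lower tail, where $\psi(t_0)=0$ so the generation term vanishes), whereas you allow general $t,s$ and absorb $\psi(-t),\psi(s)$ by taking $\epsilon$ small enough; both lead to the same conclusion.
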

Note that the optimal bound, that is to say the one that leads to the almost-sure behavior of $\max_{|u| \leq  \epsilon \log n} {V}(u)$, is well known (see for example \cite{AndDeb1}) but its exact value has no importance here.

\begin{proof}
	Recall the definition of $r_1$ in \eqref{hyp0}, for any $\epsilon>0$ and $a>0$,
\begin{align*}
\P\Big(\max_{|u|\leq \epsilon \log n} V(u) \geq a \epsilon\log n\Big) &\leq \sum_{j \leq \epsilon \log n} \E\Big[ \sum_{|v|=j} \1{V(v) \geq a \epsilon\log n}\Big]\leq \sum_{j \leq \epsilon \log n}\E\Big[\sum_{|v|=j} e^{r_1V(v)-r_1a \epsilon\log n}\Big]\\
&\leq\sum_{j \leq \epsilon \log n} e^{-r_1 a \epsilon\log n+j\psi(-r_1)}\leq C e^{- \epsilon\log n(r_1 a-  \psi(-r_1) )}
\end{align*}
for some constant $C>0$. Then choosing $a$ large enough so that $r_1 a> \psi(-r_1)$  and taking $\epsilon=\delta/a$, we obtain the bound for $\max V$. 
For $\min V$, the proof is the same except we work with $t_0$ instead of $r_1$:
\begin{align*}
&\P\Big(\min_{|u|\leq \epsilon \log n} V(u) \leq -a\epsilon \log n \Big) \leq \sum_{j \leq \epsilon \log n} \E \Big [ \sum_{|v|=j} \1{V(v) \leq -a \epsilon\log n} \Big ]\\
\leq& \sum_{j \leq \epsilon \log n}\E \Big [\sum_{|v|=j} e^{-t_0V(v)-t_0a \epsilon\log n} \Big]\leq\sum_{j \leq \epsilon \log n} e^{-t_0 a \epsilon\log n}\leq C n^{-a\epsilon t_0}\log n .
\end{align*}
 As the non-extinction probability is positive, there is a constant $C>0$ such that for any $n\geq1$, 
 $$\P^*\Big(\max_{|u|\leq \epsilon \log n} |V(u)| \geq \delta \log n\Big)\leq C\P\Big(\max_{|u|\leq \epsilon \log n} |V(u)| \geq \delta \log n\Big).$$
 which concludes the proof.
\end{proof}

The above lemma is used in the following section to obtain independence between the different branches of the tree. It will be used together with the following concentration lemma.

\begin{lemm} \label{lemconcen}
Consider two integers $\ell$ and $L$ and $L+1$  sets $A_1\in\B(\R),\dots, A_{L+1}\in\B(\R^{L+1})$. Define, for any $u\in\T$ such that $|u|=\ell$, the variable
$$
Z_u:=\sum_{u\leq x,|x|\leq L+\ell}e^{-t_0V_u(x)}\1{ \paren{V_u(y),\ u\leq y\leq x}\in A_{|x|-\ell+1}}\ .
$$
 There exists a constant $a>1$ depending only on the reproduction distribution $\nu$ such that, for $\ell$ large enough,
\begin{align*}
\P^*\Big( \sum_{|u|= \ell}  Z_{u}< \e{Z}\Big)
\leq  \frac{\e{Z^2}}{\e{Z}^2 }a^{-\ell}
\end{align*}
where 
$$Z=\sum_{|x|\leq L}e^{-t_0V(x)}\1{\paren{V(y),\ y\leq x}\in A_{|x|}}\ .$$
\end{lemm}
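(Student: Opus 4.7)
The plan is a conditional second-moment argument on $W_\ell:=\sum_{|u|=\ell}Z_u$ combined with the supercritical growth of the population $N_\ell:=\#\{u\in\T:|u|=\ell\}$. The starting point is the branching property of the environment: conditionally on $\mathcal{E}_\ell=\{\T_\ell,(V(y),y\in\T_\ell)\}$, the shifted potentials $(V_u(y))_{y\geq u}$ indexed by the vertices $u$ with $|u|=\ell$ are i.i.d.\ copies of $(V(y))_{y\in\T}$ under $\P$. Since $Z_u$ is the same functional of $V_u$ as $Z$ is of $V$, the random variables $(Z_u)_{|u|=\ell}$ are i.i.d.\ conditionally on $\mathcal{E}_\ell$, each distributed as $Z$ under $\P$; hence $\E[W_\ell\mid\mathcal{E}_\ell]=N_\ell\,\e{Z}$ and $\Var(W_\ell\mid\mathcal{E}_\ell)\leq N_\ell\,\e{Z^2}$.

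Applying Chebyshev conditionally on $\mathcal{E}_\ell$ on the event $\{N_\ell\geq 2\}$ yields
$$\P(W_\ell<\e{Z}\mid\mathcal{E}_\ell)\leq\frac{N_\ell\,\e{Z^2}}{(N_\ell-1)^2\,\e{Z}^2}\leq\frac{2\,\e{Z^2}}{N_\ell\,\e{Z}^2},$$
so that for any threshold $M_\ell\geq 2$,
$$\P^*(W_\ell<\e{Z})\leq \P^*(N_\ell<M_\ell)+\frac{2\,\e{Z^2}}{M_\ell\,\e{Z}^2}.$$
(The degenerate case $\e{Z}=0$ forces $Z\equiv 0$ and makes the claim trivial.) To close the argument I would choose $M_\ell=4a^\ell$ for some $a\in(1,\e{\nu})$ and invoke the classical fact that, in the supercritical regime, the lower tail of $N_\ell$ decays exponentially: there exists $\gamma=\gamma(a,\nu)\in(0,1)$ and $C>0$ such that $\P^*(N_\ell<4a^\ell)\leq C\gamma^\ell$ (for instance from Kesten--Stigum together with an exponential control of $W_\infty:=\lim N_\ell/\e{\nu}^\ell$ near $0$, or from standard large-deviation bounds for supercritical Galton--Watson processes). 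Using $\e{Z^2}/\e{Z}^2\geq 1$, the display above then gives the announced upper bound $\tfrac{\e{Z^2}}{\e{Z}^2}a^{-\ell}$ as soon as $a\gamma<1$ and $\ell$ is sufficiently large.

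The main obstacle is precisely this calibration: one has to pick $a>1$ close enough to $1$ so that the decay rate $\gamma(a,\nu)$ of the tree-growth term satisfies $a\gamma<1$, which is exactly why the constant $a$ is allowed to depend only on the reproduction distribution $\nu$ and is not explicit. The harmless multiplicative factor $1/\P(\text{survival})$ incurred when passing from $\P$ to $\P^*$ in the Chebyshev step is easily absorbed into the "for $\ell$ large enough" qualification.
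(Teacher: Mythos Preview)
Your argument is correct and follows essentially the same route as the paper: condition on the tree up to generation $\ell$ so that the $(Z_u)_{|u|=\ell}$ become i.i.d.\ copies of $Z$, apply Chebyshev, and then invoke exponential growth of the population $N_\ell$ under $\P^*$. Two small remarks: your inequality $\tfrac{N_\ell}{(N_\ell-1)^2}\leq \tfrac{2}{N_\ell}$ actually fails for $N_\ell=2,3$ (the harmless fix is to use the constant $4$, or to work on $\{N_\ell\geq 4\}$); and the paper makes the lower-tail bound for $N_\ell$ explicit by splitting into the B\"ottcher case ($\nu(\{0,1\})=0$, where $N_\ell\geq 2^\ell$ deterministically) and the Schr\"oder case, citing Dubuc, Fleischmann--Wachtel and Biggins--Bingham for the bound $\P^*(N_\ell\leq c_1^\ell)\leq c_2^{-\ell}$, which is precisely the ``classical fact'' you appeal to.
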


\begin{proof}
We first work with the conditional probability $\P(\cdot|\T_{\ell})$ so that random variables $(Z^n_{u},|u|=\ell)$ are independent and identically distributed as $Z$ under $\P$. Denote by $D_\ell$ the number of vertices at generation $\ell$. Tchebychev's inequality gives: 
\begin{align*}
 \P\Big( \Big|\sum_{|u|= \ell}  Z_{u}-D_\ell\e{Z}\Big|> \frac{D_\ell}{2}\e{Z}\Big|\T_{\ell}\Big)
\leq \frac{4}{D_\ell}\frac{\e{Z^2}}{\e{Z}^2}\ .
\end{align*}
 Now, we have to control $D_\ell$. In the B\"ottcher case, i.e. $\nu(\set{0,1})=0$, $D_\ell\geq 2^\ell$ $\P^*$-a.s.($=\P$-a.s.) and the result of the lemma follows. 
 
 In the Schr\"oder case, i.e. $\nu(\set{0,1})>0$, Corollary 5 in \cite{Fleiwach} and Theorem 1 in \cite{Dubuc71} (see also Theorem 4 in \cite{BigBin93}) tell us that we can find two constants $c_1,c_2>1$ such that for $\ell$ large enough,
 $\P^*(D_\ell\leq c_1^\ell)\leq c_2^{-\ell}.$
 Therefore, in this case denoting by $P_{ne}>0$ the non-extinction probability 
 \begin{align*}
\P^*\Big( \sum_{|u|= \ell}  Z_{u}< \e{Z}\Big) &\leq \frac{\P\Big( \sum_{|u|= \ell}  Z_{u}< \e{Z},\ D_\ell > c_1^\ell\Big)}{P_{ne}}+c_2^{-\ell}\\
&\leq \frac{\E\Big[\P\Big(\sum_{|u|= \ell}  Z_{u}< D_\ell \e{Z}/2\Big|\T_\ell\Big) \un_{D_\ell > c_1^\ell}\Big]}{P_{ne}}+c_2^{-\ell}\\
&\leq \frac{4}{P_{ne}}\frac{\e{Z^2}}{\e{Z}^2}c_1^{-\ell}+c_2^{-\ell}\leq \frac{4}{P_{ne}}\frac{\e{Z^2}}{\e{Z}^2 }(c_1^{-\ell}+c_2^{-\ell}),
\end{align*}
and we only have to take $a<c_1\wedge c_2$ to conclude the proof.
\end{proof}

\subsection{Number of vertices with low potential \label{SecLP}}
Fix some constant $c>0$. In this subsection we are interested in a lower bound for the random variable   \[ \sum_{|x|  \leq (\log n)^3 } \1{\overline{V}(x) \leq c \log n}.  \] 
where $\VM$ is defined in \eqref{VmVM}. It counts the number of sites $x$ with generation smaller than $(\log n)^3$ such that the potential in the path from the root to $x$ remains below $c \log n$. We will see in the next section that this random variable is naturally related to the heavy range.

The proof we propose here is based on Lemma \ref{ctrlptgen} which implies that the very first generations of the tree have no important impact on the value of $\overline V$. This point can be used to obtain independence and then apply Lemma \ref{lemconcen}.

\begin{prop} \label{propenv1}Assume \ref{ass1}. Let $c>0$, For any $ \delta \in(0,1\wedge c)$, there exists $a>0$ such that for large $n$,
\begin{align*} 
{\P^*}\Big( \sum_{|x|\leq (\log n)^3} \1{\overline V(x) \leq c \log n}  \leq n^{t_0 (c-\delta) }\Big) \leq n^{- a}\ .
\end{align*}
\end{prop}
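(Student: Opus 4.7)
The plan is to decouple the first generations of the tree from the rest using Lemma~\ref{ctrlptgen}, and then apply the concentration Lemma~\ref{lemconcen} to a weighted count aggregated over independent subtrees rooted at an intermediate generation. I would fix $\eta\in(0,\delta)$ and a small $\varepsilon>0$ (both to be tuned in terms of $\delta$) and set $\ell:=\lfloor\varepsilon\log n\rfloor$. Lemma~\ref{ctrlptgen}, applied with $\eta$ in place of $\delta$, produces an event $G$ of $\P^*$-probability at least $1-n^{-b_1}$ on which every $u$ with $|u|\leq\ell$ satisfies $|V(u)|\leq\eta\log n$. Then, on $G$, for any $u$ with $|u|=\ell$ and descendant $x$ with $|x|\leq(\log n)^3$, the inequality $\max_{u\leq y\leq x}(V(y)-V(u))\leq(c-\eta)\log n$ already forces $\VM(x)\leq c\log n$, since the potential along the path from $e$ to $u$ is bounded by $\eta\log n$.

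Second, for $|u|=\ell$ I would introduce the weighted sum
\begin{equation*}
Z_u:=\sum_{\substack{u\leq x\\ \ell\leq|x|\leq(\log n)^3}}e^{-t_0 V_u(x)}\,\mathbf{1}\bigl\{\textstyle\max_{u\leq y\leq x}(V(y)-V(u))\leq(c-\eta)\log n,\ V_u(x)\in I\bigr\},
\end{equation*}
with the narrow window $I:=[(c-\eta)\log n-1,\,(c-\eta)\log n]$ chosen so that every contributing $x$ has $e^{-t_0 V_u(x)}\geq e^{-t_0(c-\eta)\log n}$. This gives the comparison
\begin{equation*}
\sum_{|x|\leq(\log n)^3}\mathbf{1}\{\VM(x)\leq c\log n\}\ \geq\ e^{t_0((c-\eta)\log n-1)}\sum_{|u|=\ell}Z_u\quad\text{on }G.
\end{equation*}
Applying Lemma~\ref{lemconcen} with $L=\lfloor(\log n)^3\rfloor-\ell$ and the sets $A_k$ encoding the two constraints defining $Z_u$ then bounds $\P^*(\sum_{|u|=\ell}Z_u<\E[Z])$ by $(\E[Z^2]/\E[Z]^2)\,a^{-\ell}$ for some $a>1$ depending only on the reproduction law.

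Third, the first moment is cheap: by the many-to-one Lemma~\ref{manytoone} at $t=t_0$ (where the weights $e^{-t_0 V}$ cancel since $\psi(t_0)=0$), $\E[Z]=\sum_{k=0}^{L}\P(\overline S_k\leq(c-\eta)\log n,\ S_k\in I)$, with $(S_k)$ a random walk of step mean $-\psi'(t_0)\geq0$. If $\psi'(t_0)<0$, the walk has positive drift and, choosing $k^\star\asymp\log n$ so that $\E[S_{k^\star}]$ sits inside $I$, the CLT plus the positive probability that a walk with drift attains its maximum at its endpoint gives $\E[Z]\gtrsim 1/\sqrt{\log n}$. In the boundary case $t_0=1,\,\psi'(1)=0$, the walk is centered and a classical ballot-type estimate at $k^\star\asymp(\log n)^2$ gives $\E[Z]\geq(\log n)^{-C}$. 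In either regime, $\E[Z]\geq n^{-o(1)}$.

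The hard part is bounding $\E[Z^2]$. The plan is a many-to-two analysis: decompose pairs $(x_1,x_2)$ by their common ancestor $z$, use the conditional independence of the subtrees rooted at distinct children of $z$, and exploit the crucial feature that the narrow window $I$ on $V(x)$ keeps every weight $e^{-t_0 V(x)}$ of order $n^{-t_0(c-\eta)}$. This should yield a bound $\E[Z^2]/\E[Z]^2\leq(\log n)^{C'}$ uniform in the truncation $L\leq(\log n)^3$. Combined with $a^{-\ell}=n^{-\varepsilon\log a}$, Lemma~\ref{lemconcen} then gives $\P^*(\sum_{|u|=\ell}Z_u<\E[Z])\leq n^{-a'}$ for some $a'>0$, and on the intersection of $G$ with this event
\begin{equation*}
\sum_{|x|\leq(\log n)^3}\mathbf{1}\{\VM(x)\leq c\log n\}\ \geq\ n^{t_0(c-\eta)}e^{-t_0}\cdot\E[Z]\ \geq\ n^{t_0(c-\delta)}
\end{equation*}
for $n$ large, since $\eta<\delta$; the exceptional probability is at most $n^{-b_1}+n^{-a'}\leq n^{-a}$ for $a:=\min(b_1,a')/2$. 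The main obstacle I foresee is making the many-to-two estimate uniform in $k\leq(\log n)^3$, and the window $I$ is what prevents the $x_1=x_2$ diagonal (where the tilted rate $\psi(2t_0)>0$ could otherwise blow up the contribution) from dominating.
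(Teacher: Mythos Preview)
Your global architecture is exactly the one the paper uses: cut at generation $\ell=\lfloor\varepsilon\log n\rfloor$, invoke Lemma~\ref{ctrlptgen} to make the first $\ell$ generations harmless, define weighted sums $Z_u$ over the independent subtrees, and feed $\E[Z]$ and $\E[Z^2]$ into Lemma~\ref{lemconcen}. The difference is in how you set up $Z_u$, and there the proposal has a real gap in the second-moment step.

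The problematic point is that your window $I$ constrains only the \emph{terminal} value $V_u(x)$, while the many-to-two decomposition produces, at every common ancestor $z$ of a pair $(x_1,x_2)$, a factor $e^{-2t_0V_u(z)}$. Your constraint says nothing about $V_u(z)$ for $z$ strictly between $u$ and $x$, so nothing prevents the path from dipping very low before landing in $I$. Concretely, after many-to-one at $t_0$ the branching term at level $j$ contributes $\E[e^{-t_0S_j}\mathbf{1}\{\overline S_j\le (c-\eta)\log n\}]=e^{j\psi(2t_0)}\,\P^{(2t_0)}(\overline S_j\le (c-\eta)\log n)$; whenever $\psi(2t_0)>0$ and $\psi'(2t_0)>0$ (which does occur under \ref{ass1}, e.g.\ when $t_0<1$ and the two roots of $\psi$ are close), the $\P^{(2t_0)}$-walk has negative drift, the barrier is ineffective, and the sum over $j\le(\log n)^3$ is super-polynomial in $n$. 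In that regime $\E[Z^2]/\E[Z]^2$ is not $(\log n)^{C'}$ and Lemma~\ref{lemconcen} gives nothing. Your remark that $I$ tames the diagonal $x_1=x_2$ is correct, but the same $\psi(2t_0)$ issue lives in the off-diagonal and is untouched by $I$.

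The paper's fix is to add a \emph{lower} barrier $\underline V_u(x)\ge -B$ to the definition of $Z_u$ (with a window of width $\Theta(\log n)$ rather than $1$). This forces $V_u(z)\ge -B$ at every ancestor $z$, hence $e^{-2t_0V_u(z)}\le e^{t_0B}e^{-t_0V_u(z)}$, and then a one-line recursion gives $\E[(Z^n)^2]\le C\ell_n$ uniformly, without any many-to-two analysis. The wider window also makes the first-moment lower bound elementary (no local CLT needed): with $(c-\delta/2)\log n\le V_u(x)\le(c-\delta/4)\log n$ one gets $\E[Z^n]\ge c>0$ when $\psi'(t_0)<0$, and $\E[Z^n]\gtrsim 1/\log n$ in the boundary case $\psi'(t_0)=0$. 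Your width-$1$ window would require a local limit theorem for $S$, which is not provided by \ref{ass1}. Both issues disappear once you enlarge the window and, crucially, add the constraint $\underline V_u(x)\ge -B$.
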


\begin{proof}

We first use Lemma \ref{ctrlptgen} to control $\overline{V}(u)$ on the very first generations of the tree:  
 we can choose $\epsilon>0$ and $b_1>0$ such that for $n$ large enough $\P^*\big(\overline{\mathcal{A}}_n\big) \leq n^{-b_1}$ with $\mathcal{A}_n:= \{\max_{|u|= \epsilon_n} \overline{V}(u) < \frac\delta4\log n\}$ and $\epsilon_n=\lfloor \epsilon\log n\rfloor$. 

The strategy is then similar for the different cases (depending on $\psi$), the only difference is the generation we have to work with. 
Let us take some constant $B \geq 0$ and  a sequence of integers $\seq{\ell}$ such that, for $n$ large enough, $\ell_n+\epsilon_n$ is smaller than $(\log n)^3$; the exact choice of the sequence depends on $\psi$ and will be explicitly given later. Consider now the collection of random variables: 
$$ \forall n\geq 1,\ \forall u\in\T,\quad Z_u^n:=\sum_{\substack{ x\geq u \\ |x|=\ell_n+\epsilon_n}} e^{-t_0 V_u(x)} \1{(c-\delta/2)\log n\leq V_u(x),\overline{V}_u(x) \leq (c-\delta/4)\log n,\  \underline{V}_u(x)\geq -B}\ .$$
For $n$ large enough, on the event $\mathcal{A}_n$,
\begin{align*}
 & \sum_{|x|\leq (\log n)^3}  \1{\overline{V}(x) \leq c\log n }  \geq \sum_{|u|=\epsilon_n} \sum_{\substack{ x\geq u \\ |x|=\ell_n+\epsilon_n}}  \1{\overline{V}_u(x) \leq (c-\delta/4)\log n}\geq n^{t_0(c-\delta/2)}  \sum_{|u|=\epsilon_n}  Z_u^n.
\end{align*}
Hence, concentration Lemma \ref{lemconcen} implies that there exists a constant $b_2>0$ such that for $n$ large enough,
\begin{align}\label{eqconc}
\P^* \Big( \sum_{|x|\leq (\log n)^3}  \1{\overline{V}(x) \leq c\log n }\leq n^{t_0(c-\delta/2)} \e{Z^n} \Big)
\leq  \frac{\e{\paren{Z^n}^2}}{ \e{Z^n}^2}n^{-b_2}+\p[*]{\overline{\mathcal{A}}_n}
\end{align}
where
$$\forall n\geq 1,\quad Z^n:=\sum_{|x|=\ell_n} e^{-t_0 V(x)} \1{(c-\delta/2)\log n\leq V(x),\overline{V}(x) \leq (c-\delta/4)\log n,\  \underline{V}(x)\geq -B}.$$
So we only have to control $\e{Z^n}$ and $\e{\paren{Z^n}^2}$ to prove the proposition. 

\smallskip

\noindent $\bullet$ First for the mean $\e{Z^n}$ the proof is different depending on the value of $\psi'(t_0)$:

 \textbf{Case 1: $\psi'(t_0)<0$}. Recall that in this case either $t_0=1$ and $\psi(1)=\inf_{s \in [0,1]} \psi(s)=0$, or  $t_0<1$ and $\inf_{s \in [0,1]} \psi(s)<0$. We choose $\ell_n=\lfloor\ell\log n\rfloor$ with $\ell=(c-3\delta/8)/|\psi'(t_0)|$. Many-to-one formula of Lemma \ref{manytoone} with $t=t_0$ gives
\begin{align*}
\e{Z^n}& = \p{(c-\delta/2)\log n\leq S_{\ell_n},\  \overline{S}_{\ell_n} \leq (c-\delta/4)\log n,\ \underline{S}_{\ell_n}\geq -B} \\
& \geq  1-\p{(c-\delta/2)\log n> S_{\ell_n}}-\p{ \overline{ S}_{\ell_n} >  (c-\delta/4)\log n}-\p{\underline{S}_{\ell_n}< -B}.
\end{align*}
For any $B \geq 0$ and $\lambda>0$ such that $\psi(t_0+ \lambda)<0$, Markov inequality yields 
\begin{align*}
\p{\underline{S}_{\ell_n}<-B}&\leq \sum_{k \leq \ell_n} \p{S_{k}<-B}\leq \sum_{k \leq \ell_n} e^{- \lambda B} e^{ \psi(t_0+ \lambda) k}\leq\frac{e^{-\lambda B}}{1-e^{ \psi(t_0+ \lambda)}}\ ,
\end{align*}
 so taking $B$ large enough, for any $n$,  $\P(\underline{S}_{\ell_n}<-B)\leq C <1$. And, as the process $S$ is the sum of i.i.d. random variables with mean $|\psi'(t_0)|$ and $c-\delta/2 <|\psi'(t_0)| \ell < c-\delta/4$, using exponential Markov inequality we can also prove that for some constant $b_3>0$,
$$\p{(c-\delta/2)\log n> S_{\ell_n}} \leq n^{-b_3}\quad\text{ and }\quad\p{ \overline{ S}_{\ell_n} >  (c-\delta/4)\log n} \leq n^{-b_3}\ .$$   
Combining these bounds, we obtain that for any $n\geq1$,
\begin{align}
\e{Z^n} \geq \frac1C>0\ . \label{38b}
\end{align}

  \textbf{Case 2: $\psi'(t_0) = 0$}. In this case either $t_0<1$ or $\psi(1)=\psi'(1)=0$.  We can basically use the same method as in the case $\psi'(t_0)<0$ except that important generations (see \cite{AndDeb2}, \cite{AndChen}) are now the ones of the order $(\log n)^2$. So we take this time $\ell_n= \lfloor( \ell \log n)^2\rfloor$ for some $\ell>0$ and we can choose $B=0$.  Many-to-one Lemma yields like above
\begin{align*}
\e{Z^n} 
 & = \p{(c-\delta/2)\log n\leq S_{\ell_n},\  \overline{S}_{\ell_n} \leq (c-\delta/4)\log n | \underline{S}_{\ell_n}\geq0} \p{ \underline{S}_{\ell_n}\geq0}.
\end{align*}
 We know (see \cite{AidShi}, {equation (2.8)}) that the limit  
 $\lim_{n \rightarrow + \infty} {\ell_n}^{1/2} \P (  \underline{S}_{\ell_n}   \geq  0)= d>0 $ exists, also 
 by invariance principle (see \cite{Bolth}), 
\begin{align*}
& \lim_{n\to\infty}\p{(c-\delta/2)\log n\leq S_{\ell_n},\  \overline{S}_{\ell_n} \leq (c-\delta/4)\log n\ \big|\ \underline{S}_{\ell_n}\geq0} \\
=& \lim_{n\to\infty}\p{(c-\delta/2) (\sigma \ell )^{-1}  \leq S_{\ell_n}/ \sigma \sqrt {\ell_n},\  \overline{S}_{\ell_n} / \sigma \sqrt{\ell_n} \leq (c-\delta/4) (\sigma \ell )^{-1}\ \big|\ \underline{S}_{\ell_n}\geq0} \\ 
=&\ \p{(c-\delta/2) (\sigma \ell )^{-1}  \leq \mathcal{M},\  \overline{\mathcal{M}} \leq (c-\delta/4) (\sigma \ell )^{-1} } \geq \frac1C >0.
\end{align*} 
 where $\mathcal{M}$ is the Brownian meander and $\sigma^2:=\Var(S_1)=\E(\sum_{|x|=1} V^2(x) e^{-V(x)})< + \infty$ by \eqref{hyp0}. So finally we obtain in this case:  for $n$ large enough,
\begin{align}
\e{Z^n}  \geq \frac{1}{C\log n}.
\end{align}

\smallskip
 
\noindent $\bullet $ For $\e{(Z^n)^2}$, we first introduce the sequence of variables
$$
\forall k\in\N,\ M_k:= \sum_{ |x|=|y|={k}} e^{-t_0 V(x)-t_0 V(y)} \1{\underline{V}(x)\wedge \underline{V}(y)\geq -B }\ .
$$ 
For any $k\in\N$,
\begin{align*}
M_{k+1}\leq&\sum_{ |u|=|v|=k} e^{-t_0V(u)-t_0V(v)} \1{\underline{V}(u)\wedge \underline{V}(v)\geq -B}\sum_{\substack{x\text{ s.t. }x^*=u\\y\text{ s.t.} y^*=v}}e^{-t_0V_u(x)-t_0V_v(y)}\\
=&\sum_{\substack{u\neq v\\|u|=|v|=k}} e^{-t_0 V(u)-t_0 V(v)} \1{\underline{V}(u)\wedge \underline{V}(v)\geq -B}\sum_{\substack{x\text{ s.t. }x^*=u\\y\text{ s.t.} y^*=v}}e^{-t_0 V_u(x)-t_0 V_v(y)}\\
&+ \sum_{|u|=k} e^{-2t_0 V(u)} \1{\underline{V}(u)\geq -B}\sum_{\substack{x,y\text{ s.t. }\\x^*=y^*=u}}e^{-t_0 V_u(x)-t_0V_u(y)}\ .
\end{align*}
Then taking conditional expectation with respect to $\mathcal{E}_k := \sigma\paren{\T_k,\ (V(x), |x| \leq k)}$ , the above expression gives
\begin{align*}
\e{M_{k+1}|\mathcal{E}_{k}}&\leq M_{k}\e{\sum_{|x|=1}e^{-t_0 V(x)}}^2+\sum_{|u|=k} e^{-2t_0 V(u)} \1{\underline{V}(u)\geq -B}\e{\sum_{|x|=|y|=1}e^{-t_0 V(x)-t_0 V(y)}}\\ 
&\leq M_{k}+C\sum_{|u|=k} e^{-t_0 V(u)}\ .
\end{align*}
Note that $\e{\sum_{|x|=|y|=1}e^{-t_0 V(x)-t_0 V(y)}}=\e{(\sum_{|x|=1}e^{-t_0 V(x)})^2}$ is finite thanks to \eqref{hyp1}. And we get recursively, as $M_0=1$,
\begin{align*}
\e{M_{k+1}}\leq 1+C\e{\sum_{|u|\leq k} e^{-t_0 V(u)}}=1+C(k+1). 
\end{align*}
This gives the bound for the second moment: $\e{(Z^n)^2}\leq \e{M_{\ell_n}}\leq C\ell_n$. Collecting this bound together with the one for $\e{Z^n}$ in \eqref{eqconc}, this concludes the proof.
\end{proof}

\section{Lower and Upper bound for the heavy range of recurrent walks \label{secSR}}

In this section, we prove Theorems \ref{thm1} and \ref{Cortpsdet}. We consider separately lower and upper bounds. The upper bounds are easier to obtain than the lower ones so they are treated for all cases at the beginning of this section in Proposition \ref{proprange}. For the lower bounds we treat separately the contribution coming from vertices with low potential in Proposition \ref{proprange0} and the contribution coming from vertices with high potential in Proposition \ref{proprange0bis}. It turns out that for slow random walks, that is to say random walks with logarithmic behavior, the contribution coming from vertices with low potential is sufficient to obtain the asymptotics for the $\log$-heavy range. Conversely, for fast but sub-diffusive cases, that is to say when $1<\kappa < 2$, then only vertices with high potential contribute. Finally for diffusive cases, that is to say for $\kappa\geq2$ then either vertices with low or high potential contribute depending on the value of $\theta$.

Remind that $\Pe_x$ stands for the probability where the environment is fixed and the index $x$ stands for the starting point of the random walk. To obtain the bounds, the following environment-related variable is essential:
\begin{align}
\forall x\in\T,\quad H_x:=\sum_{y\leq x}e^{V(y)-V(x)}. \label{Hx}
\end{align}
Indeed the important quenched probabilities below are related to $H_x$. For any $x \in \T$, let $T_x:= \inf\{k\geq0, \ X_k= x\}$ be the hitting time of vertex $x$. As the walk $\seq{X}$ is recurrent, the expressions of the probability of hitting $x$ before $e^*$ starting from the root and the probability of hitting $x$ before $e^*$ starting from $x^*$ are the same as for a one-dimensional walk: the restriction of $\seq{X}$ to the path $\llbracket e^{*},x\rrbracket$. So a standard result for one-dimensional random walks in random environment (see \cite{Golosov}) yields, for any $x\in\T$,
\begin{align}\label{defa}
a_x:=\Pe _{e}(T_x<T_{e^{*}})=1/ \sum_{z\in\llbracket e,x\rrbracket} e^{V(z)}=\frac{e^{-V(x)}}{H_x}
\end{align}
and
\begin{align}\label{defb}
b_x:=\Pe _{x^*}(T_x<T_{e^{*}})=\sum_{z\in\llbracket e,x^*\rrbracket} e^{V(z)}/ \sum_{z\in\llbracket e,x\rrbracket} e^{V(z)}=1-\frac{1}{H_x}\ .	
\end{align}

\subsection{Upper bounds}

The main results of this section is the following proposition, it gives the upper bounds for $\rang[n^{\theta}]$ depending on the value of $\theta$. The proof is quite short compared to the lower bounds and gives a good idea of what can be expected for the lower bounds.

\begin{prop} \label{proprange} Assume \ref{ass1} and fix $\theta\in[0,1)$. For any $\delta>0$, there is a constant $\epsilon>0$ such that, for $n$ large enough,
\begin{itemize}
\item  in all cases but $\psi(1)=0$ and $\psi'(1) < 0$,
\begin{align*}
\p{\rang[n^{\theta}]> n^{t_0(1-\theta)+\delta}} \leq n^{-\epsilon}\ , 
\end{align*}
\item  if  instead $\psi(1)=0$ and $\psi'(1) < 0$, 
\begin{itemize}
	\item if $ 1<\kappa \leq 2$, $\displaystyle\quad \p{\rang[n^{\theta}]  > n^{\kappa (1-\theta) +\delta}} \leq n^{-\epsilon}\ $,
\item if $\kappa\in(2,\infty]$, $\displaystyle\quad \p{\rang[n^{\theta}]  > n^{(2-\kappa\theta)\vee(1-\theta) +\delta}} \leq n^{-\epsilon}\ $. 
\end{itemize}
 \end{itemize}
\end{prop}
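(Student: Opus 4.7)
The proof proceeds via Markov's inequality applied to the first moment $\E[\rang[n^\theta]] = \sum_{x\in\T} \P(N_x^{T^n}\geq n^\theta)$: if one can show $\E[\rang[n^\theta]] \leq n^{\xi+\delta/2}$ for the target exponent $\xi$, then Markov's inequality yields $\P(\rang[n^\theta]>n^{\xi+\delta})\leq n^{-\delta/2}$, giving the announced tail bound. The core task therefore reduces to estimating $\Pe(N_x^{T^n}\geq n^\theta)$ for each $x$ and then integrating with respect to the environment.

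Under $\Pe$, the $n$ excursions from $e^*$ are i.i.d., and within one excursion the number $\xi_x := N_x^{T^1}$ of upward crossings of the edge $(x^*,x)$ is either $0$ (with probability $1-a_x$) or geometric-like on $\{\xi_x\geq 1\}$; the relevant parameters follow from the one-dimensional hitting-time formulas \eqref{defa}--\eqref{defb} applied to the walk restricted to the path $\llbracket e^*,x\rrbracket$, together with the fact that every descendant of $x$ must pass through $x$ to return to $e^*$. The first-moment identity $\Ee[N_x^{T^n}]\asymp n\,e^{-V(x)}$ suggests two natural mechanisms producing $\{N_x^{T^n}\geq n^\theta\}$: (i) the walk reaches $x$ during many different excursions, which requires $V(x)$ to be small; and (ii) a single excursion alone produces $\geq n^\theta$ visits, which by the geometric tail forces the trap height $H_x$ defined in \eqref{Hx} to satisfy $H_x\gtrsim n^\theta$.

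In the slow cases ($\psi(1)\neq 0$, or $\psi(1)=\psi'(1)=0$), the maximum generation $X^*_{T^n}$ reached by the walk is at most a power $L_n$ of $\log n$ with probability $1-n^{-\epsilon}$ (cf.\ Lemma \ref{asympbeh}), so only vertices with $|x|\leq L_n$ can contribute. Applying the Markov-type quenched bound $\Pe(N_x^{T^n}\geq n^\theta)\leq 1\wedge (n^{1-\theta}e^{-V(x)})$, splitting the sum according to whether $V(x)$ lies below or above $(1-\theta)\log n$, and invoking the many-to-one formula at the parameter $t_0$ (for which $\psi(t_0)=0$), each piece is bounded by a constant times
\begin{align*}
\sum_{\ell\leq L_n}\E\bigl[e^{t_0 S_\ell}\mathbf{1}_{S_\ell\leq (1-\theta)\log n}\bigr]\leq L_n\,n^{t_0(1-\theta)}.
\end{align*}
Summing absorbs the polylogarithmic factor into $\delta$ and produces the announced exponent $t_0(1-\theta)+\delta$.

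In the fast case $\psi(1)=0$, $\psi'(1)<0$, the walk may reach generations of order $n^p$ with $p=1-1/\min(\kappa,2)$, so the previous truncation is far too crude. The sum is decomposed as $\Sigma_{2,1}+\Sigma_{2,2}$ along mechanisms (i) and (ii). The spread piece $\Sigma_{2,1}$ is controlled by the Markov bound on $\Ee[N_x^{T^n}]$ combined with the many-to-one formula at $t=1$ and careful accounting of the generations involved, yielding $\Sigma_{2,1}\leq n^{1-\theta+\delta}$. For the trap piece $\Sigma_{2,2}$, the quenched geometric tail gives $\Pe(\xi_x\geq n^\theta)\lesssim a_x e^{-n^\theta/H_x}$, a quantity negligible unless $H_x\gtrsim n^\theta$; many-to-one at $t=1$ then reduces the estimate to the law of $H^S_\ell$, and the sharp tail bound $\P(H^S_\ell\geq m)\leq Cm^{-(\kappa-1)}$ of Lemma \ref{compHS}, summed over $\ell\leq n^p$, produces $\Sigma_{2,2}\leq n^{2-\kappa\theta+\delta}$. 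Taking the maximum yields $(2-\kappa\theta)\vee(1-\theta)$, the exponent announced for $\kappa>2$. For $1<\kappa\leq 2$ a finer joint analysis, exploiting that the spread mechanism itself requires $H_x$ not too small (a costly event when $\kappa$ is close to $1$), sharpens the bound to $\kappa(1-\theta)$. The principal technical obstacle is precisely this control of $\Sigma_{2,2}$: the slow polynomial decay of the tail of $H^S_\ell$ combined with the polynomial range of admissible generations requires the precise estimate of Lemma \ref{compHS}, and cannot be obtained from any crude moment bound.
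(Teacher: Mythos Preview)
Your overall strategy---Markov's inequality on the first moment, truncation at the maximal generation, and many-to-one combined with the tail of $H^S_\ell$---matches the paper's. But there is a concrete error in the fast case that undermines the argument as written.

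You state that the walk reaches generations of order $n^p$ with $p=1-1/\min(\kappa,2)$. This is the exponent for $X^*_n$ at \emph{deterministic} time $n$, not for $X^*_{T^n}$. Since $T^n\sim n^{\min(\kappa,2)}$, the maximal generation reached by time $T^n$ is of order $n^{\min(\kappa-1,1)}$, which is exactly the truncation level $L_n=n^{\min(\kappa-1,1)+\epsilon}$ supplied by Lemma~\ref{asympbeh}. Your $p$ is strictly smaller (e.g.\ $p=1/2$ when $\kappa>2$, versus the correct $\min(\kappa-1,1)=1$), so truncating at $n^p$ is not permissible: the walk does go further, and the event $\{X^*_{T^n}\leq n^p\}$ does not have probability $1-n^{-\epsilon}$.

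This error propagates inconsistently through your bounds. With your $p=1/2$ for $\kappa>2$, summing the trap piece over $\ell\leq n^{1/2}$ would give $\Sigma_{2,2}\lesssim n^{3/2-\kappa\theta}$, not the $n^{2-\kappa\theta}$ you claim. And with the \emph{correct} $L_n=n^{\min(\kappa-1,1)+\epsilon}$, the bound $\Sigma_{2,2}\leq C(\log n)^\kappa L_n\,n^{1-\kappa\theta}$ yields directly $n^{\kappa(1-\theta)+\epsilon}$ when $1<\kappa\leq 2$ and $n^{2-\kappa\theta+\epsilon}$ when $\kappa>2$. In particular, no ``finer joint analysis'' is needed to get $\kappa(1-\theta)$ in the sub-diffusive range: it falls out immediately from the correct truncation, together with $\Sigma_1\leq Cn^{1-\theta}\log n\leq n^{\kappa(1-\theta)}$. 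Your belief that extra work is required there is a symptom of the wrong $p$.

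A minor point: in the paper the decomposition in Case~2 is first on $\{V(x)\leq d\log n\}$ versus $\{V(x)>d\log n\}$ (producing the $n^{1-\theta}$ term from $\Sigma_1$), and only then on the number of contributing excursions $E^n_x$. Your ``spread vs.\ trap'' split is morally the same, but writing it this way makes transparent why $\Sigma_{2,1}$ (high $V$, at least two excursions) is in fact $\leq 1$ rather than $n^{1-\theta}$.
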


To prove the proposition, we have to control the largest generations visited by the walk before $n$ returns to $e^*$: $ X^*_{\Tn}= \max_{ 0 \leq k \leq \Tn }|X_k|$. 
The different behaviors of this random variable, which depend on $\psi$, have been studied in  \cite{HuShi10a}, \cite{HuShi10} and \cite{HuShi10b}. In the following lemma we present a simpler version of their results adapted to our purpose:
\begin{lemm} \label{asympbeh}
 If  $\inf_{s \in [0,1]} \psi(s)=0$ and $\psi'(1) \geq 0$ there exist $B,b>0$ such that for $n$ large enough,
\begin{align*}
\p{ X^*_{\Tn}  \geq L_n} \leq  n^{-b}\  \textit{with } L_n=B (\log n)^{3}.
\end{align*}
Otherwise if $\psi'(1) < 0$ for any $\epsilon>0$, there exists $b>0$ such that, for $n$ large enough,
\begin{align*}
\p{ X^*_{\Tn}  \geq   L_n}  \leq n^{-b}\ \textit{with } L_n=n^{\min(\kappa-1,1) + \epsilon} .
\end{align*}
Finally  if $\inf_{s \in [0,1]} \psi(s)<0$ there exist $B>0$ and $b>0$ such that for $n$ large enough, 
\begin{align*}
\p{X^*_{\Tn}  \geq L_n } \leq  n^{-b}\ \textit{with } L_n=B\log n.
\end{align*}
\end{lemm}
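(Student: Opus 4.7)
My plan is to bound $\P(X^*_{T^n}\geq L_n)$ by first reducing to a one-excursion estimate and then using case-specific inputs.

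Since, under $\Pe$, the successive excursions of $\bf X$ from $e^*$ are i.i.d., and by \eqref{defa} the quenched probability of reaching a given vertex $x$ in one excursion is $a_x=e^{-V(x)}/H_x$, a union bound over $\{|x|=m\}$ combined with $H_x\geq e^{\overline V(x)-V(x)}$ yields
\begin{align*}
\P(X^*_{T^n}\geq m)\leq n\,\P(X^*_{T^1}\geq m)\leq n\,\E\Big[\sum_{|x|=m}a_x\Big]\leq n\,\E\Big[\sum_{|x|=m}e^{-\overline V(x)}\Big].
\end{align*}

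For the third case ($\inf_{[0,1]}\psi<0$), I would pick $t\in[0,1]$ with $\psi(t)<0$, which exists by hypothesis and is admissible by \eqref{hyp0}. Since $\overline V(x)\geq 0$ and $\overline V(x)\geq V(x)$, we have $\overline V(x)\geq tV(x)$ pointwise for $t\in[0,1]$, so $e^{-\overline V(x)}\leq e^{-tV(x)}$. The many-to-one formula (Lemma~\ref{manytoone} with parameter $t$) then gives $\E[\sum_{|x|=m}e^{-tV(x)}]=e^{\psi(t)m}$, so with $m=B\log n$ and $B>1/|\psi(t)|$,
\begin{align*}
\P(X^*_{T^n}\geq m)\leq n\,e^{\psi(t)B\log n}=n^{1+B\psi(t)}\leq n^{-b}
\end{align*}
for some $b>0$.

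For the other two cases the direct route fails because $\psi(1)=0$: many-to-one with $t=1$ yields only $\E[\sum e^{-\overline V(x)}]=\E[e^{S_m-\overline S_m}]$, which is $\Theta(1/\sqrt m)$ in the boundary case and $\Theta(1)$ in the sub/diffusive case, hence not polynomially small in $n$. I would therefore split
\begin{align*}
\P(X^*_{T^n}\geq m)\leq \P(T^n> K_n)+\P(X^*_{K_n}\geq m)
\end{align*}
with $K_n$ slightly above the typical size of $T^n$ given in Remark~\ref{localtimeroot}: $K_n=n^{1+\epsilon/4}$ in case 1 (where $T^n\asymp n\log n$), and $K_n=n^{\min(\kappa,2)+\epsilon/4}$ in case 2. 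Viewing $T^n=\sum_{k=1}^n(T^k-T^{k-1})$ as a sum of i.i.d.\ excursion lengths allows one to promote the convergences of Remark~\ref{localtimeroot} to a polynomial tail on $T^n$ by a truncation/Chernoff argument. For the second term one invokes the deterministic-time asymptotics of $X^*_k$: in case 1, $X^*_k/(\log k)^3\to c_1>0$ from \cite{HuShi10a}; in case 2, $X^*_k\asymp k^{1-1/\min(\kappa,2)}$ from \cite{HuShi10} and \cite{Faraud}. The stated $L_n$ strictly exceeds $K_n^{1-1/\min(\kappa,2)}$ or $(\log K_n)^3$ respectively, so each term is bounded by $n^{-b}$.

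The main obstacle is that the references above only give convergence in probability or in law, whereas the lemma demands a polynomial tail bound. For $T^n$ this is routine via the i.i.d.\ excursion decomposition. For $X^*_k$ in the sub-diffusive case $1<\kappa<2$, the situation is more delicate because deep traps govern the behavior; using the heavy-tail estimate $\P(H^S_\ell\geq m)\leq Cm^{-(\kappa-1)}$ from Lemma~\ref{compHS}, one must show that with probability at least $1-n^{-b}$ no atypically deep trap appears within the first $L_n$ generations, so that the walk cannot reach generation $L_n$ within time $K_n$. This truncation/many-to-one argument on the environment is where the quantitative estimates on $H^S_\ell$ developed in Section~\ref{secEnv} are essential.
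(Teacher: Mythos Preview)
Your initial reduction $\P(X^*_{T^n}\geq m)\leq n\,\P(T_{|m|}<T_{e^*})$ is exactly the paper's first step, and your self-contained treatment of the third case ($\inf_{[0,1]}\psi<0$) via $e^{-\overline V(x)}\leq e^{-tV(x)}$ and many-to-one is correct and clean.

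The gap is in cases 1 and 2. You abandon the one-excursion estimate $\P(T_{|m|}<T_{e^*})$ too early: you only observe that the crude bound $\E[\sum_{|x|=m}e^{-\overline V(x)}]$ is not polynomially small, and then switch to the split $\P(T^n>K_n)+\P(X^*_{K_n}\geq m)$. But the paper stays with $\P(T_{|L_n|}<T_{e^*})$ and simply invokes sharper quantitative bounds already available in the literature: equation~(5.4) of \cite{HuShi10b} gives $\limsup_n (\log n)^{-1}\log\P(T_{|L_n|}<T_{e^*})\leq -c_1<-1$ for $L_n=B(\log n)^3$ with $B$ large in the boundary case, and Proposition~4.2 of \cite{HuShi10} gives the analogous bound for $L_n=n^{\min(\kappa-1,1)+\epsilon}$ when $\psi'(1)<0$. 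Multiplying by $n$ immediately yields $n^{-b}$.

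Your alternative route has genuine obstacles you do not resolve. For $\P(T^n>K_n)$, a ``truncation/Chernoff'' argument is not routine: in the null-recurrent boundary case $\E[T^1]=\infty$, and in the sub-diffusive case $T^1$ has heavy power-law tails, so one cannot apply exponential concentration to the i.i.d.\ sum without first establishing the very polynomial tail estimates you are trying to prove. For $\P(X^*_{K_n}\geq L_n)$, you correctly note that the cited references give only convergence in probability or in law, not polynomial deviation bounds; the sketch you offer via Lemma~\ref{compHS} would essentially reprove the results of \cite{HuShi10}. The paper avoids all of this by citing the one-excursion tail directly.
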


\begin{proof} By the strong Markov property, $\pe{X_{\Tn}^*  \geq L_n}=1-\paren{1-\pe{T_{|L_n|}<T_{e^*}}}^n$ where $T_{|L_n|}$ is the hitting time of generation $L_n$:
$$
T_{|L_n|}=\inf\set{t\geq0\ /\ \exists x\in\T, |x|=L_n \text{ and } X_t=x}
$$
where $\inf\emptyset=\infty$.
As for any $x\in[0,1]$, $1-(1-x)^n\leq nx$, integrating the previous equality with respect to the distribution of $\mathcal{E}$ gives
\begin{align*}
	\p{X_{\Tn}^*  > L_n}\leq n\p{T_{|L_n|}<T_{e^*}}.
\end{align*}
This probability has been intensively studied in \cite{HuShi10a}, \cite{HuShi10}, \cite{HuShi10b} and \cite{Hu2017}. More precisely, when $\inf_{s \in [0,1]} \psi(s)=0$: if  $\psi'(1) \geq 0$ then Equation (5.4) in \cite{HuShi10b} gives, for $B$ large enough, the existence of a constant $c_1>1$ such that
\begin{align}\label{eq441}
	 \limsup_{n \rightarrow +\infty} \frac{1}{\log n }\log \p{T_{|L_n|}<T_{e^*}} \leq - c_1,\ 
\end{align}
  If instead $\psi'(1) < 0$ then Proposition 4.2 (ii and iii) in \cite{HuShi10} implies 
\begin{align}\label{eq442}
\limsup_{n \rightarrow +\infty} \frac{1}{\log n } \log \p{T_{|L_n|}<T_{e^*}} \leq -1-\epsilon/2, 
\end{align}
(note that Proposition 4.2 is written for regular trees and the return time in $e^*$ is replaced by the return time in $e$ but this does not change the normalization rate). Finally, if $\inf_{s \in [0,1]} \psi(s)<0$, according to the proof of Theorem 1.1 in \cite{HuShi10},  for $B$ large enough there exists $c_2>1$ such that, 
\begin{align}\label{eq443}
\limsup_{n \rightarrow +\infty} \frac{1}{\log n } \log \p{T_{|L_n|}<T_{e^*}} \leq -c_2.
\end{align}
The different results of the lemma are now direct consequences of \eqref{eq441}, \eqref{eq442} and \eqref{eq443}.
\end{proof}

We are now ready to prove the proposition.

\begin{proof}[Proof of Proposition \ref{proprange}]

We first restrict the sum over the whole tree to the first $L_n$ generations where $L_n$ is the sequence introduced in Lemma \ref{asympbeh} corresponding to the assumptions about $\psi$:
\begin{align}\label{LaProb}
\p{\rang[n^{\theta}]> n^{{t_0 (1-\theta)+ \delta}}}\leq \p{ X^*_{\Tn}  \geq L_n}+n^{-t_0 (1-\theta)- \delta} \E{\sum_{ |x|  \leq L_n}\pe{\NT\geq  n^{\theta}}}
\end{align}
where the second term in the above upper bound comes from Markov inequality. Thanks to  Lemma \ref{asympbeh}, we only have to bound the last expectation $\e{\sum_{ |x|  \leq L_n}\Pe(\NT \geq  n^{\theta})}$. The proof is different whether $L_n$ is a power of $n$ or a power of $\log n$.\\
\textit{ {\bf Case 1:} $L_n$ is of the order $(\log n)^p$, $p>0$.}\\
We split the sum into two terms depending on the value of $V(x)$: define 
$$
\Sigma_1:=\E\Big[\sum_{ |x|  \leq L_n}\Pe(\NT \geq  n^{\theta})\1{V(x) \leq (1-\theta) \log n}\Big]$$
and
$$\Sigma_2:=\E\Big[\sum_{ |x|  \leq L_n}\Pe(\NT \geq  n^{\theta})\1{V(x) > (1-\theta) \log n}\Big].
$$
For $\Sigma_1$, we bound $\Pe(\NT \geq  n^{\theta})$ by 1 and use the many-to-one Formula, Lemma \ref{manytoone}, with $t=t_0$:
\begin{align*}
\Sigma_1&\leq \E\Big[\sum_{ |x|  \leq L_n}\1{V(x) \leq (1-\theta)\log n}\Big]\leq \E\Big[\sum_{ |x|  \leq L_n}e^{t_0((1-\theta)\log n -V(x))}\Big]\\
&=n^{t_0(1-\theta)}\sum_{i=0}^{L_n}\e{e^{t_0(S_i -S_i)}}=n^{t_0(1-\theta)}(L_n+1).
\end{align*}
For $\Sigma_2$, we first compute the expectation of $\NT$ at fixed environment:
\begin{align} \label{Lamoy}
 \ee{\NT}&=\sum_{i=1}^n\ee{\NTi{x}{i}-\NTi{x}{i-1}}=n\ee{\NTi{x}{1}}=n\frac{a_x}{1-b_x}=ne^{-V(x)}
\end{align}
where $a_x$ and $b_x$ have been defined in \eqref{defa} and \eqref{defb}. Now we can apply Markov inequality to $\Pe(\NT \geq  n^{\theta})$ and use many-to-one formula with $t=t_0\leq1$:
\begin{align*}
\Sigma_2 & \leq n^{(1-\theta)} \E\Big[\sum_{|x| \leq L_n} e^{- V(x)} \1{V(x) > (1-\theta)\log n}\Big] \\
 & \leq n^{(1-\theta)} e^{(t_0-1) (1-\theta) \log n} \E\Big[\sum_{|x| \leq L_n} e^{- t_0 V(x)}  \1{V(x) > (1-\theta) \log n} \Big] \leq n^{t_0 (1-\theta) } (L_n+1).
\end{align*}
Inserting the upper bounds of $\Sigma_1$ and $\Sigma_2$ in \eqref{LaProb} concludes the proof in this first case.

\smallskip

\noindent \textit{{\bf Case 2:} $L_n$ is of the order $n^{\min(\kappa-1,1)+\epsilon}$.}\\ 
 Note that in this case, $\psi$ satisfies $\inf_{s \in [0,1]} \psi(s)=0$ with  $\psi'(1) < 0$, so $t_0=1$. We assume in the definition of $L_n$ that  $\epsilon=\delta/2$ and take some real $d>0$ which value will be fixed later. First we split the expectation $\e{ \sum_{ |x|\leq L_n}\1{ \NT \geq n^{\theta}} }$ into  two parts $\Sigma_1$ and $\Sigma_2$, depending on the values of $\red{V}(x)$:
$$
 \Sigma_1:=\E\Big[  \sum_{ |x| \leq L_n} \pe{{ \NT \geq n^{\theta}}}\1{{V}(x) \leq d \log n}\Big]
 $$
  and
  $$\Sigma_2:=\E\Big[  \sum_{ |x| \leq L_n} \pe{{ \NT \geq n^{\theta}}}\1{{V}(x) > d \log n}\Big]\ .
 $$
 We first deal with $\Sigma_1$. Equality \eqref{Lamoy}, Markov inequality  and many-to-one Formula, Lemma \ref{manytoone}, with $t=1$ yield 
\begin{align*}
\Sigma_1& \leq \E\Big[\sum_{|x| \leq L_n}  ne^{-V(x)}n^{-\theta} \1{{V}(x) \leq d \log n}\Big]= n^{1-\theta}   \sum_{j=1}^{L_n} \P\Big( {{S}_j \leq d \log n}\Big).
\end{align*}
 As $\psi'(1)< 0$ we can choose a $r>0$ in such way that $\psi(1+r)<0$. Let $A>0$. When  $L_n$ is larger than $A\log n$,
\begin{align}
\sum_{j=1}^{L_n} \p{ {S}_j \leq d \log n } 
 & \leq A \log n+\sum_{j=A \log n} ^{L_n} \p{ e^{-r{S_j}} \geq e^{-r d \log n} }\label{evSj} \\
 & \leq A \log n+\sum_{j=A \log n} ^{L_n} e^{r d \log n} e^{ \psi(1+r)j},\nonumber \end{align}
where we have used the equality $\e{e^{-r S_1}}=e^{ \psi(1+r)}$ obtained from Lemma \ref{manytoone}. So the last sum is smaller than $e^{r d \log n} e^{ A \psi(1+r) \log n} $, and choosing $A$ large enough, this converges to $0$. 
Therefore when $\psi'(1)<0$, the following bound holds for some constant $C>0$:
\begin{align}\label{sig1}
\Sigma_1 \leq C  n^{1-\theta}\log n\ .
\end{align}

We now have to deal with $\Sigma_2$. For $x\in\T$, consider $E^{n}_x :=\sum_{i=1}^n \1{  \exists k\in [T^{i-1},T^{i}) ,\ X_k=x }$ the number of excursions where $x$ has been hit. We split once again $\Sigma_2$ into two other sums depending on the value of $E^{n}_x$:  
$$ 
 \Sigma_{2,1}=\E\Big[\sum_{ |x| \leq L_n} \1{ \NT \geq n^{\theta}}  \1{{V}(x)>d \log n} \1{E^{n}_x  \geq 2}\Big]
 $$
 and
$$\Sigma_{2,2}=\E\Big[\sum_{ |x| \leq L_n} \1{ \NT \geq n^{\theta}}  \1{{V}(x)>d \log n} \1{E^{n}_x  =1}  \Big]\ . 
 $$
 We first prove that, for $d$ large enough, the walk will be able to reach a vertex $x$ satisfying $V(x) > d \log n$ during a unique excursion $[T^{i-1},T^{i})$. Under $\Pe$, $E^{n}_x$ follows the binomial distribution $\mathcal{B}(n,a_x)$, thus
 \begin{align*}
 \pe{E^{n}_x\geq2}\leq\ee{E^{n}_x}-\pe{E^{n}_x=1}&=na_x(1-(1-a_x)^{n-1})\leq n^2a_x^2\leq n^2e^{-2 {V}(x)}
 \end{align*}
 and many-to-one Formula yields 
 \begin{align} \label{sig21}
\Sigma_{2,1}  \leq n^2 \sum_{j=1}^{L_n} \e{e^{S_j-2 {S}_j}\1{ {S}_j >d \log n}}\leq L_n n^{2-d}\leq 1
 \end{align}
 for $d$ large enough. For $\Sigma_{2,2}$, we first notice that 
\begin{align*}
\pe{\NT \geq n^{\theta}, E^{n}_x =1}= \sum_{i=1}^n \Pe(\NTi{x}{i}-\NTi{x}{i-1} \geq n^{\theta},\ \forall j \neq i,\ N^{(j)}_x-N^{(j-1)}_x=0).
\end{align*} 

In particular,  $\Pe(\NT \geq n^{\theta}, E^{n}_x =1) \leq n a_x (b_x)^{n^{\theta}}$ and thanks to many-to-one Formula, we get
\begin{align*} 
\Sigma_{2,2} &\leq n \E\Big[\sum_{ |x| \leq L_n} a_x (b_x)^{n^{\theta}}  \1{{V}(x)>d \log n}\Big]\leq n \sum_{j=1}^{L_n} \e{ \frac{1}{H_j^S} \bigg(1-\frac{1}{H_j^S}\bigg)^{n^{\theta}}}
 \end{align*}
 where $H_j^S={\sum_{m=1}^j e^{S_m-S_j}}$ is the random variable defined in Lemma \ref{compHS}. Now remark that if $H_j^S \leq  n^{\theta} / (3 \log n)  $, then $\Big(1-1/H_j^S  \Big)^{n^{\theta}} \leq 1/n^3$ and so 
$$\sum_{j=1}^{L_n} \e{ \frac{1}{H_j^S} \bigg(1-\frac{1}{H_j^S}\bigg)^{n^{\theta}} \1{H_j^S \leq   n^{\theta}/  (3 \log n) } } \leq L_n/n^3$$  
and according to Lemma \ref{compHS},
\begin{align*} 
 \e{ \frac{1}{H_j^S} \bigg(1-\frac{1}{H_j^S}\bigg)^{n^{\theta}} \1{H_j^S >   n^{\theta}/3 \log n) } }  
 & \leq  \frac{3  \log n}{n^{\theta}} \p{{H^S_{j} >   \frac{n^{\theta}}{3 \log n} }} \leq C\frac{(\log n)^{\kappa}}{n^{\kappa\theta}}.
 \end{align*}
Finally, 
\begin{align}\label{sig22n}
\Sigma_{2,2} &\leq C (\log n)^{\kappa} L_n n^{1-\kappa \theta}.
\end{align} 
The bounds obtained for $\Sigma_1$, $\Sigma_{2,1}$ and $\Sigma_{2,2}$ in \eqref{sig1}, \eqref{sig21} and \eqref{sig22n} give the result in this second case.
\end{proof}
 \subsection{Lower bounds}
 In this section, we prove two propositions: the first one gives a lower bound for the heavy range $\rang[n^{\theta}]$ for any cases but is only optimal for the slowest cases, that is to say for random walks with logarithmic behavior. This first proposition is obtained by  considering vertices with low potential (see definition of set $A_n$ below). The second one, Proposition \ref{proprange0bis}, which is more technical to obtain, deals only with the fast cases, that is to say when $\psi(1)=0$ and $\psi'(1)<0$ and focuses on vertices with high potential.

\begin{prop}  \label{proprange0}
 Assume \ref{ass1} and fix $\theta\in[0,1)$. For any $ 0 <\delta <t_0(1-\theta)$, there exists a constant $\epsilon>0$ such that for $n$ large enough,
\begin{align*}
\quad \P^*\Big(\sum_{|x|\leq(\log n)^3}  \1{ \NT \geq n^\theta} < n^{t_0(1-\theta)-\delta}\Big) \leq n^{-\epsilon}\ .
\end{align*}
\end{prop}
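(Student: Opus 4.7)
The plan is to extract from the environment, which is already controlled by Proposition \ref{propenv1}, a family of low-potential vertices that the walk will visit at least $n^\theta$ times with overwhelming quenched probability. Fix $\delta \in (0, t_0(1-\theta))$, set $c := 1-\theta-\delta/(4t_0) > 0$ and $\delta_0 := \delta/(4t_0) \in (0,1\wedge c)$, and introduce
$$A_n := \{x \in \T \,:\, |x| \leq (\log n)^3,\ \overline V(x) \leq c\log n\}.$$
Proposition \ref{propenv1} applied with parameters $c$ and $\delta_0$ yields $a>0$ such that $\P^*(|A_n| \leq n^{t_0(1-\theta)-\delta/2}) \leq n^{-a}$ for $n$ large enough.

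For a fixed $x \in A_n$, the variable $\NT$ is, under $\pe{}$, the sum of $n$ i.i.d.\ excursion contributions, one per return to $e^*$. Decomposing an excursion at the first hitting time of $x$ and then at successive returns to $x^*$ shows that each contribution vanishes with probability $1-a_x$ and otherwise has the law of $1+G$ with $G$ geometric of parameter $1-b_x$. Using \eqref{defa}, \eqref{defb} and \eqref{Lamoy}, this yields $\ee{\NT}=ne^{-V(x)}\geq n^{\theta+\delta/(4t_0)}$ together with
$$\Var^{\mathcal E}(\NT) \leq n\,\ee{(\NTi{x}{1})^2} \leq \frac{2n\,a_x}{(1-b_x)^2} = 2n\,H_x\,e^{-V(x)}.$$
Since $H_x\,e^{V(x)}=\sum_{y\leq x}e^{V(y)}\leq (|x|+1)\,e^{\overline V(x)}\leq 2(\log n)^3 n^c$ on $A_n$, and $\ee{\NT}/2\geq n^\theta$ for $n$ large, Chebyshev's inequality gives, uniformly for $x \in A_n$,
$$\pe{\NT < n^\theta}\leq \frac{4\,\Var^{\mathcal E}(\NT)}{\ee{\NT}^2}\leq \phi_n := 16(\log n)^3\, n^{-\theta-\delta/(4t_0)}.$$

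The variables $\NT$ are not independent for distinct $x$, but independence is not needed here: applying Markov's inequality under $\pe{}$ to $\sum_{x\in A_n}\1{\NT<n^\theta}$,
$$\pe{\sum_{x\in A_n}\1{\NT<n^\theta}\geq |A_n|/2}\,\leq\, \frac{2}{|A_n|}\sum_{x\in A_n}\pe{\NT<n^\theta}\,\leq\, 2\phi_n,$$
a bound independent of $|A_n|$. On the complement of the two bad events, which has $\P^*$-probability at least $1-n^{-a}-2\phi_n$,
$$\sum_{|x|\leq (\log n)^3}\1{\NT\geq n^\theta}\,\geq\, |A_n|/2 \,\geq\, n^{t_0(1-\theta)-\delta}\quad\text{for $n$ large,}$$
and the proposition follows with any $\epsilon<\min(a, \delta/(8t_0))$.

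The substantive ingredient is the environmental estimate of Proposition \ref{propenv1}, which is already in place; the quenched variance bound then comes directly from the explicit geometric structure of $\NTi{x}{1}$, and the Markov-in-$\pe{}$ step neatly bypasses the lack of independence between $\NT$'s for different vertices of $A_n$.
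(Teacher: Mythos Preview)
Your proof is correct and follows the same overall architecture as the paper's: define the low-potential set $A_n$, invoke Proposition~\ref{propenv1} to bound $|A_n|$ from below, and show that under $\Pe$ most vertices of $A_n$ are visited at least $n^\theta$ times. The difference is in how this last step is carried out. The paper computes the Laplace transform of $\NTi{x}{1}$ and obtains the exponential bound $\pe{\NT\le n^\theta}\le \exp(-cn^{\theta+\delta'})$ for each $x\in A_n$, then takes a union bound over $A_n$ (controlling $\e{|A_n|}$ by the many-to-one formula). You instead use only the second moment of $\NTi{x}{1}$ to get the polynomial bound $\pe{\NT<n^\theta}\le\phi_n$, and compensate for the weaker tail by applying Markov's inequality to $\sum_{x\in A_n}\1{\NT<n^\theta}$ with threshold $|A_n|/2$ rather than $1$, so the $|A_n|$ factors cancel. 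Your route is more elementary (no Laplace transform, no separate bound on $\e{|A_n|}$), while the paper's argument yields the stronger intermediate fact that \emph{every} vertex of $A_n$ is visited $n^\theta$ times with overwhelming probability; for the stated proposition both are adequate.
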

  
Remark that the upper bound for $\rang[n^{\theta}]$ given in Proposition \ref{proprange} for the case $\inf_{s \in [0,1]} \psi(s) = 0$ and $\psi'(1) < 0$ can be larger than the lower bound obtained in the above proposition. This means that we ignore vertices with high potential that are of great importance in this case. They are treated in  Proposition \ref{proprange0bis}.

 \begin{proof} 
  Consider the subset 
\begin{align*}
\forall n\geq1,\quad A_n&:=\set{x\in\T,\ \overline{V}(x)\leq (1-\theta-\delta)\log n,\   \text{ and }\ |x|\leq (\log n)^3}.
\end{align*}
Let us assume for the moment that for $n$ large enough,
\begin{align}\label{eqAn}
\P\Big(\sum_{x\in A_n}\1{\ \NT<n^{\theta}}\geq 1 \Big) \leq e^{ -n^{\theta}}\ ,
\end{align}
 then
 $$\P\Big(\sum_{|x|\leq(\log n)^3}  \1{ \NT \geq n^\theta} < n^{t_0(1-\theta)-\delta}\Big)\leq \P\Big(|A_n|< n^{t_0(1-\theta)-\delta}\Big) +e^{ -n^{\theta}}
 $$
 and the control on the cardinal of $A_n$ given in Proposition \ref{propenv1}, taking $c=1- \theta-\delta$, concludes the proof the proposition.

So let us now prove \eqref{eqAn}. Fix an environment $\mathcal{E}=(\T,\boldsymbol{\omega})$ and consider a vertex $x\in\T$.
 By the strong Markov property, the sequence  $(\NTi{x}{i} -\NTi{x}{i-1} , i \geq 1)$ is, under $\Pe=\Pe_e $, an i.i.d. sequence  with law given by $\NTi{x}{1}$ (remark that $N^{(0)}_x =0$). Applying exponential Markov inequality, we have for any $\eta>0$ and $\alpha>0$, 
\begin{align}
\pe{\NT  \leq \alpha}=\P\Big(\sum_{i=1}^n  [\NTi{x}{i} -\NTi{x}{i-1}] < \alpha\Big)& =\pe{ e^{-\eta \sum_{i=1}^n  [\NTi{x}{i} -\NTi{x}{i-1}]} > e^{- \eta \alpha } } \nonumber \\
&  \leq \exp\paren{\eta \alpha +n\log \ee{e^{-\eta  \NTi{x}{1})}} } \label{upboundP}.
\end{align}
It is easy to see, by the strong Markov property,  that the distribution of $ \NTi{x}{1}$ under $\Pe_x$ is geometrical with parameter $1-b_x$, so that 
\begin{align*}
\log \ee{\exp(-\eta  \NTi{x}{1})}&=\log\paren{(1-a_x)+a_x\Ee_x\cro{\exp(-\eta  \NTi{x}{1})} }\\
&=\log\paren{1-a_x+ a_x(1-b_x)/(e^{\eta}- b_x)}\\
&\leq -a_x+ a_x(1-b_x)/(1+\eta- b_x)\ .
\end{align*}
Then coming back to \eqref{upboundP}  with $\eta=(1-b_x)$, we obtain
\begin{align}\label{TL}
\pe{\NT  \leq \alpha} \leq \exp\paren{-\frac{na_x}{2}+(1-b_x)\alpha}.
\end{align}
The equations of $a_x$ and $b_x$ given in \eqref{defa} and \eqref{defb} imply that $a_x/(1- b_x)=e^{-V(x)}$. Together with \eqref{TL}, this gives 
\[ \pe{\NT  \leq \alpha}  \leq e^{-\frac{a_x}{2}\paren{n-e^{V(x)}\alpha}}\ . \] 
 As for any $x \in A_n$, $e^{-V(x)}\geq n^{-(1-\theta-\delta)}$ and $a_x\geq 1/(n^{1-\theta-\delta}(\log n)^3)$, the above inequality implies that
\begin{align*}
\Pe\Big(\sum_{x\in A_n}\1{\ \NT<n^{\theta}}\geq 1\Big) &   \leq \sum_{x\in A_n} \pe{\NT  \leq n^{1-\delta}e^{-V(x)}}   \leq  |A_n| e^{ -{n^{\theta+\delta}(1-n^{-\delta})/(\log n)^3}}\ .
\end{align*}
 Integrating with respect to the distribution of $\mathcal{E}$, we obtain for $n$ large enough:
$$
\P\Big(\sum_{x\in A_n}\1{\ \NT<n^{\theta}}\geq 1 \Big) \leq C\e{|A_n|}e^{-n^{\theta+\delta/2}}\ .
$$ 
 Then we only need an appropriate upper bound for $\e{|A_n|}$ to complete the proof. This is a direct consequence of many-to-one Formula. Indeed,  taking $t=t_0$ in Lemma \ref{manytoone}, we get 
\begin{align*}
\e{|A_n|}=&\E \Big [ \sum_{|x| \leq (\log n)^3} \1{\overline{V}(x) \leq (1-\theta-\delta)\log n }  \Big ] \\
 =&\sum_{k=1}^{\lfloor(\log n)^3\rfloor}\E\Big [ e^{t_0 S_{k} +k \psi(t_0)} \1{\overline{S}_{k} \leq  (1-\theta-\delta)\log n} \Big ]\leq  (\log n)^3 n^{t_0 (1-\theta-\delta)}\ .
\end{align*}
\end{proof}

The following proposition deals with the rapid cases $(\psi(1)=0,\psi'(1)<0)$: we treat the vertices with large potential left aside in the previous proposition. Quite technical, the proof is decomposed in essentially four Lemmata.

 \begin{prop}  \label{proprange0bis}
 Assume \ref{ass1} and suppose we are in the case 
 $$\inf_{s \in [0,1]} \psi(s)=\psi(1)=0\ ,\ \psi'(1)<0\text{ and }\kappa=\inf\set{s>1,\ \psi(s)=0}<\infty\ .$$
 Define $\zeta:=(\kappa-1)^{-1}\wedge 1$ and  consider a real $\theta\in[0,\zeta)$. For any constant $ 0 <\delta <(\kappa-1)(\zeta-\theta)$, there exists a positive number $\epsilon$ such that for $n$ large enough,
\begin{align*}
&\p[*]{\rang[n^{\theta}]  \leq n^{2\wedge\kappa -\kappa\theta-\delta}} \leq n^{-\epsilon}. 
\end{align*}
\end{prop}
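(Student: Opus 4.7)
The strategy is to exhibit many vertices $x$ such that the walk visits $x$ at least once before the $n$-th return to $e^{*}$, and that, being trapped at $x$, accumulates at least $n^{\theta}$ visits during that excursion. Recall from Section~\ref{secSR} that $a_x = e^{-V(x)}/H_x$ and $b_x = 1 - 1/H_x$, so that conditional on the walk reaching $x$ during a given excursion, the local time at $x$ within that excursion is geometric with parameter $1/H_x$. Thus the natural candidate set is
$$ G_n := \bigl\{ x\in\T : c_1 n^{\theta} \leq H_x \leq c_2 n^{\theta},\ V(x) \in (1-\theta)\log n + [-C,C],\ |x| = \ell_n \bigr\},$$
for constants $c_1,c_2,C$ and a generation $\ell_n$ to be chosen: $\ell_n \asymp \log n$ when $\kappa\leq 2$ and a power of $n$ when $\kappa>2$. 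For $x\in G_n$, a single excursion already achieves $N^{\Tn}_x \geq n^{\theta}$ with probability bounded below by $a_x \cdot (1 - 1/H_x)^{n^{\theta}} \gtrsim e^{-V(x)}/n^{\theta}$.

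The first step (corresponding to Lemma~\ref{eqindep} announced in the overview) is to observe that because $e^{-V(x)} \lesssim n^{\theta-1}$ on $G_n$, the probability that $x$ is visited in two distinct excursions is, by the binomial argument already used for $\Sigma_{2,1}$ in the proof of Proposition~\ref{proprange}, at most $n^2 a_x^2 = o(1)$ vertex by vertex. Hence up to a negligible event, the $n$ excursions pick disjoint subsets of $G_n$, which will let us treat the indicator variables $\un_{N_x^{\Tn} \geq n^{\theta}}$, $x\in G_n$, as essentially independent at fixed environment. A Tchebychev inequality at fixed environment (Lemma~\ref{lemtcheb}) then reduces the problem to a lower bound on the quenched expectation
$$ \sum_{x\in G_n} \Pe\bigl( N_x^{\Tn}\geq n^{\theta}\bigr) \;\gtrsim\; \frac{|G_n|}{n^{\theta}}\cdot n\cdot e^{-(1-\theta)\log n} = |G_n|\, n^{-\theta},$$
and on its variance, itself controlled by the same quantity; the target lower bound $n^{2\wedge\kappa - \kappa\theta-\delta}$ for $\rang[n^{\theta}]$ therefore translates into a lower bound $|G_n| \gtrsim n^{\theta + (2\wedge\kappa)-\kappa\theta-\delta/2}$ holding with $\P$-probability at least $1-n^{-\epsilon}$.

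To establish this annealed lower bound on $|G_n|$, I would apply the concentration estimate of Lemma~\ref{lemconcen} (after a short truncation of the very first generations via Lemma~\ref{ctrlptgen}, to recover independence between subtrees), which reduces the problem to computing $\e{|G_n|}$ and $\e{|G_n|^2}$. The first moment is handled by the many-to-one Formula (Lemma~\ref{manytoone}) with $t=\kappa$, using $\psi(\kappa)=0$: it reduces to
$\P\bigl( S_{\ell_n}\in (1-\theta)\log n + [-C,C],\ c_1 n^{\theta}\leq H_{\ell_n}^S\leq c_2 n^{\theta}\bigr)$,
whose lower bound of order $n^{-(\kappa-1)\theta}$ comes from the tail estimate of $H^S_\ell$ proved in Lemma~\ref{compHS} combined with a standard local central limit / Cramér-type estimate for $S_{\ell_n}$ at level $(1-\theta)\log n$. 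The second moment is estimated in the same spirit: the two-vertex sum splits, via branching at the common ancestor, into a diagonal contribution of order $\e{|G_n|}$ and a quadratic contribution which, for $\kappa\leq 2$, is of order $\e{|G_n|}^2$ (giving $\xi_\theta = \kappa(1-\theta)$), while for $\kappa>2$ it is dominated by a factor $e^{-\psi(2)\ell_n}$ that tunes the optimal value $\ell_n$ and produces the exponent $2-\kappa\theta$.

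The main obstacle will be the second-moment calculation in Step three: one must track precisely where the two many-to-one random walks separate, while constraining both the position $S_{|x|}$ and the trap depth $H_x$, and identify the generation $\ell_n$ that minimizes the ratio $\e{|G_n|^2}/\e{|G_n|}^2$. The threshold $\zeta=(\kappa-1)^{-1}\wedge 1$ and the dichotomy $\kappa\leq 2$ versus $\kappa>2$ reflect exactly whether the diagonal or the branching contribution dominates this second moment.
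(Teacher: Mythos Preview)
Your high-level architecture (single-excursion reduction, quenched Tchebychev, annealed concentration via Lemma~\ref{lemconcen}) matches the paper, but the candidate set $G_n$ you propose is too small, and this is a genuine gap, not a technicality.

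With your constraint $V(x)\in(1-\theta)\log n+[-C,C]$, the many-to-one formula with $t=1$ gives
\[
\E[|G_n|]=\E\Big[e^{S_{\ell_n}}\,\1{S_{\ell_n}\approx(1-\theta)\log n,\ H^S_{\ell_n}\asymp n^{\theta}}\Big]
\asymp n^{1-\theta}\,\P\!\bigl(H^S_{\ell_n}\asymp n^{\theta}\bigr)\asymp n^{1-\kappa\theta},
\]
since the $S$-walk has drift $|\psi'(1)|>0$ and hence $S_{\ell_n}\approx(1-\theta)\log n$ forces $\ell_n\asymp\log n$. (Your suggestion of $t=\kappa$ does not help: you have dropped the factor $e^{\kappa S_{\ell_n}}$, and under the $t=\kappa$ walk $S$ has \emph{negative} drift, so the positivity constraint on $S_{\ell_n}$ is again a rare event.) Thus your method yields at best $\rang[n^{\theta}]\gtrsim n^{1-\kappa\theta}$, which for every $\kappa>1$ is strictly weaker than the target $n^{(2\wedge\kappa)-\kappa\theta}$ and even weaker than the $n^{1-\theta}$ of Proposition~\ref{proprange0}. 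Taking $\ell_n$ a power of $n$ while keeping $V(x)\approx(1-\theta)\log n$ is not an option either: that event then has super-polynomially small probability.

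The paper's mechanism is different in two essential ways. First, it drops the window on $V(x)$ (only $V(x)\geq 4\log n$ is imposed) and sums over \emph{all} generations up to $\Ldel=n^{(\kappa-1)\wedge1-\delta/2}$; the relevant object is not $|G_n|$ but the weighted sum $\sum_{|x|\leq\Ldel}e^{-V(x)}\1{x\in\Gamma_n}$, whose mean picks up a factor $\Ldel$ from the number of generations and yields the correct exponent $(\kappa-1)(\zeta-\theta)$ (Lemma~\ref{binfmoy}). Each individual vertex is hit with only polynomially small probability, but there are polynomially many generations contributing. Second, the variance bound (Lemma~\ref{lemGn}) requires the additional path constraints of $\Gamma_n^2$, namely $\sum_{z\leq x}H_z\leq n^{1-7\delta/16}$ and $\max_{z\leq x}H_z\leq n^{\zeta}$; without them the term $\sum_{x,y}H_{x\wedge y}e^{-V(x)-V(y)+V(x\wedge y)}$ coming from Lemma~\ref{ConVar} is not controllable, and your claim that the variance is ``itself controlled by the same quantity'' does not go through. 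The dichotomy $\kappa\le2$ versus $\kappa>2$ in the paper enters through the definition of $\Ldel$ and the cap $n^{\zeta}$ on $H_z$, not through a diagonal/off-diagonal split of a second moment at a single generation.
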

 
 In the sequel it is enough to consider  generations slightly smaller than typical visited generations for this  cases (that is $n^{(\kappa-1)\wedge 1}$, as recalled in Lemma \ref{asympbeh}), so we define
$$
\Ldel:=n^{(\kappa-1)\wedge1-\delta/2}=n^{(\kappa-1)\zeta-\delta/2}\ .  
$$ 
  
First let us introduce the set $\Gamma_n=\Gamma_n^1\cap\Gamma_n^2$ where
\begin{align*}
\Gamma_n^1&=\set{x\in\T\ /\ n^{\theta}\leq H_x\leq n^{\theta+\delta/16},\ V(x)\geq 4\log n}\text{ and}\\
\Gamma_n^2&=\Big\{x\in\T\ /\ \sum_{z\leq x} H_z\leq n^{1-7\delta/16}\text{ and } \forall z\leq x,\ H_z\leq n^{\zeta}\Big\}	,
\end{align*}
with $H_x$ given in \eqref{Hx}.
The set $\Gamma_n$ contains the vertices which are the main contributors to the heavy range. Remark that, when $\kappa\leq 2$, the condition $\set{\forall z\leq x,\ H_z\leq n^{\zeta}}$ is implied by the condition on the sum $\sum_{z\leq x} H_z$ and is therefore useless in this case.
 
 The set $\Gamma_n$ is such that the walk visits most of these vertices more than $n^{\theta}$ times with a large probability.
Then, as
$$
\rang[n^{\theta}]\geq \sum_{|x|\leq \Ldel} \1{\NT\geq n^\theta}\1{x\in\Gamma_n},
$$ 
we only have to obtain a lower bound for the above sum.
   We first prove in the following lemma that, with a large probability, the walk reaches a given vertex of $\Gamma_n$ during a single excursion $[T^{i-1},T^{i})$.
\begin{lemm}\label{eqindep}
For any $x\in\T$, recall $E^{n}_x :=\sum_{i=1}^n \1{  \exists k\in [T^{i-1},T^{i}) ,\ X_k=x}$ the number of excursions where the walk hits vertex $x$. Then, for $n$ large enough,
	$$
	\P\Big(\sum_{|x|\leq \Ldel}\1{E^{n}_x\geq2}\1{x\in\Gamma_n}\geq 1\Big)\leq n^{-1}\ .
	$$
\end{lemm}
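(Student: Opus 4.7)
The plan is to apply Markov's inequality to the nonnegative integer-valued random variable $\sum_{|x| \leq \Ldel} \1{E^n_x \geq 2} \1{x \in \Gamma_n}$: since $\P(\mathrm{sum}\geq 1)\leq \E[\mathrm{sum}]$, it suffices to show that this expectation is $o(n^{-1})$ as $n\to\infty$. As the indicator $\1{x\in\Gamma_n}$ depends only on the environment, I first integrate the walk at fixed $\Ecal$, writing $\E[\1{E^n_x\geq 2}\1{x\in\Gamma_n}]=\E[\1{x\in\Gamma_n}\pe{E^n_x\geq 2}]$.

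Under $\Pe$, the strong Markov property applied at the successive returns $\Tn[i-1]$ to $e^*$ shows that $E^n_x$ is binomial with parameters $n$ and $a_x=e^{-V(x)}/H_x$. Therefore $\pe{E^n_x\geq 2}\leq \binom{n}{2}a_x^2\leq n^2 e^{-2V(x)}/H_x^2$, which is exactly the bound already used to control the term $\Sigma_{2,1}$ in the proof of Proposition~\ref{proprange}. On $\Gamma_n$, the defining conditions $H_x\geq n^{\theta}$ and $V(x)\geq 4\log n$ give respectively $H_x^{-2}\leq n^{-2\theta}$ and $e^{-2V(x)}\leq n^{-4}e^{-V(x)}$, so that
$$\1{x\in\Gamma_n}\pe{E^n_x\geq 2}\leq n^{-2-2\theta}\,e^{-V(x)}\1{V(x)\geq 4\log n}.$$

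Summing over $|x|\leq \Ldel$ and applying the many-to-one formula (Lemma~\ref{manytoone}) with $t=1$, for which $\psi(1)=0$, I obtain
$$\E\Big[\sum_{|x|\leq \Ldel} e^{-V(x)}\Big]=\sum_{k=0}^{\lfloor \Ldel\rfloor}\e{e^{S_k-S_k}}=\lfloor \Ldel\rfloor+1\leq 2\,\Ldel.$$
Since $\Ldel=n^{(\kappa-1)\wedge 1-\delta/2}\leq n^{1-\delta/2}$ whether $\kappa\leq 2$ or $\kappa>2$, the total expectation is bounded by $4\,n^{-1-2\theta-\delta/2}=o(n^{-1})$, which yields the announced estimate for $n$ large enough.

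There is no serious obstacle here; the only thing to check is that the definition of $\Gamma_n^1$ has been calibrated precisely for this step. The requirement $V(x)\geq 4\log n$ supplies the extra $n^{-4}$ factor that is needed to compensate both the $n^2$ coming from the binomial tail of $E^n_x$ and the polynomial number of generations $\Ldel$ produced by the many-to-one summation, while the condition $H_x\geq n^{\theta}$ contributes the $n^{-2\theta}$ factor ensuring the bound remains small uniformly in $\theta\in[0,\zeta)$.
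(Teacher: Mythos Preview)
Your proof is correct and follows essentially the same route as the paper: Markov's inequality, the binomial law of $E^n_x$ under $\Pe$, the bound $\pe{E^n_x\geq2}\leq n^2a_x^2$, and the many-to-one formula to sum $e^{-V(x)}$ over $|x|\leq\Ldel$. The only cosmetic difference is that you also exploit $H_x\geq n^{\theta}$ to gain an extra factor $n^{-2\theta}$, whereas the paper simply uses $H_x\geq1$ and obtains the cruder but sufficient bound $n^{-2}e^{-V(x)}$.
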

\begin{proof}
Under $\Pe$, $E^{n}_x$ follows the binomial distribution $\mathcal{B}(n,a_x)=\mathcal{B}(n,e^{-V(x)}/H_x)$ so 
 \begin{align*}
 \pe{E^{n}_x\geq2}\leq\ee{E^{n}_x}-\pe{E^{n}_x=1}&=na_x(1-(1-a_x)^{n-1})\leq n^2a_x^2\ .
 \end{align*}
Moreover, for $x\in\Gamma_n$, $n^2a_x^2=n^2e^{-2V(x)}H_x^{-2}\leq n^{-2}e^{-V(x)}$ and
\begin{align*}
	\E\Big[\sum_{|x|\leq \Ldel}\pe{E^{n}_x\geq2}\1{x\in\Gamma_n}\Big] \leq n^{-2} \E\Big[\sum_{ |x| \leq \Ldel} e^{-V(x)}\Big]\leq n^{-2} \Ldel\leq n^{-1}\ .
\end{align*}
And Markov inequality gives the result of the lemma.	
\end{proof}
 
Previous lemma shows that there is independence between the contributions of the different  excursions. This is an important fact to obtain  the lower bound in Lemma \ref{lemtcheb}. We also need  a control on the mean $\ee{\NTi{x}{1}\NTi{y}{1}}$ given in Lemma \ref{ConVar} below; this result is similar to Lemma 5.2 in \cite{HuShi15}, but we give here a short proof for completeness.
\begin{lemm}\label{ConVar}
For any $x, y\in\T$,
\begin{align}
\text{if $x\leq y$, }\quad \ee{\NTi{x}{1}\NTi{y}{1}}&=e^{-V(y)}\paren{2H_x-1}\label{mom22}\\
\text{ else }\quad \ee{\NTi{x}{1}\NTi{y}{1}}&=2e^{-V(x)-V(y)+V(x\wedge y)}H_{x\wedge y}.\label{mom23}
\end{align}
\end{lemm}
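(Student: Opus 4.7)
The plan is to use the strong Markov property together with the explicit geometric structure of $\NTi{x}{1}$ under the quenched measure $\Pe$. First I will compute $\ee{(\NTi{x}{1})^2}$ (which already gives the diagonal case $y=x$), then handle the ancestor case $x\leq y$ via a Wald-type one-level decomposition, and finally the generic case $x\wedge y\notin\{x,y\}$ via a nested two-level decomposition at the common ancestor.

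The variable $\NTi{x}{1}$ counts the number of transitions $x^*\to x$ during $[0,T^1)$, i.e. the number of successive visits of the walk to the subtree rooted at $x$. The first such visit occurs with probability $a_x$; by strong Markov, conditionally on the previous visit having ended (the walk being back at $x^*$), each subsequent one occurs with probability $b_x$. Hence $\NTi{x}{1}$ is $0$ with probability $1-a_x$ and otherwise geometric on $\{1,2,\dots\}$ with success parameter $1-b_x=1/H_x$, giving $\ee{(\NTi{x}{1})^2}=a_x(1+b_x)/(1-b_x)^2=e^{-V(x)}(2H_x-1)$. For $x\leq y$, every $y^*\to y$ transition takes place inside one of the $\NTi{x}{1}$ sub-excursions of the walk in the subtree of $x$. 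Writing $M_y^{(i)}$ for the number of such transitions in the $i$-th sub-excursion, the strong Markov property makes the $M_y^{(i)}$ i.i.d.\ and independent of $\NTi{x}{1}$, so $\NTi{y}{1}=\sum_{i=1}^{\NTi{x}{1}}M_y^{(i)}$. Matching means with the known $\ee{\NTi{y}{1}}=e^{-V(y)}$ yields $\ee{M_y}=e^{V(x)-V(y)}$, and hence
\[
\ee{\NTi{x}{1}\NTi{y}{1}}=\ee{(\NTi{x}{1})^2}\,\ee{M_y}=e^{-V(y)}(2H_x-1),
\]
which is \eqref{mom22}.

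For the generic case, set $w=x\wedge y$ and run the same decomposition at $w$: both $\NTi{x}{1}$ and $\NTi{y}{1}$ split as $\sum_{i=1}^{\NTi{w}{1}}M_x^{(i)}$ and $\sum_{i=1}^{\NTi{w}{1}}M_y^{(i)}$ with $(M_x^{(i)},M_y^{(i)})$ i.i.d.\ in $i$. Expanding the product and separating the diagonal $i=j$ from $i\neq j$ (where distinct sub-excursions are independent) gives
\[
\ee{\NTi{x}{1}\NTi{y}{1}}=\ee{\NTi{w}{1}}\,\ee{M_xM_y}+\ee{\NTi{w}{1}(\NTi{w}{1}-1)}\,\ee{M_x}\,\ee{M_y},
\]
in which only $\ee{M_xM_y}$ remains to be computed. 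Let $w_x,w_y$ be the distinct children of $w$ lying on the paths to $x$ and $y$. Inside one sub-excursion at $w$, the number $L$ of visits to $w$ is geometric with success probability $p_0:=\pe{X_1=w^*\mid X_0=w}$, and conditionally on $L$ the $L-1$ non-terminal steps leaving $w$ are i.i.d., each choosing a child $w_i$ with probability $\pe{X_1=w_i\mid X_0=w}/(1-p_0)$. The multinomial covariance formula combined with $\ee{(L-1)(L-2)}=2(1-p_0)^2/p_0^2$ then yields $\ee{M_{w_x}M_{w_y}}=2\,e^{2V(w)-V(w_x)-V(w_y)}$. A further (one-level) decomposition nested inside the subtrees of $w_x$ and $w_y$ (whose sub-sub-excursions are independent, as they occur at disjoint times) produces $\ee{M_xM_y}=\ee{M_{w_x}M_{w_y}}\,e^{V(w_x)-V(x)+V(w_y)-V(y)}=2\,e^{2V(w)-V(x)-V(y)}$. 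Substituting everything back, the factor $2$ from $\ee{M_xM_y}$ combines with the $-1$ in $\ee{(\NTi{w}{1})^2}=e^{-V(w)}(2H_w-1)$ to yield $\ee{\NTi{x}{1}\NTi{y}{1}}=2H_w\,e^{V(w)-V(x)-V(y)}$, which is \eqref{mom23}.

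The delicate point is the arithmetic behind $\ee{M_{w_x}M_{w_y}}$: the factor $2$ arising from the second falling factorial moment of a geometric random variable is precisely what makes the constant in the final formula simplify from $2H_w-1$ to the clean coefficient $2H_w$ in the non-ancestor case.
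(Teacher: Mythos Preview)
Your proof is correct, but it takes a genuinely different route from the paper. The paper argues by induction on the generation $m=|x|\vee|y|$, using the fact that, conditionally on the environment and on the edge local times up to generation $m$, the variable $\NTi{y}{1}$ for $|y|=m+1$ is negative binomial with parameters $\NTi{y^*}{1}$ and $1/(1+e^{-(V(y)-V(y^*))})$; this yields the one--step relations $\ee{\NTi{x}{1}\NTi{y}{1}\mid\mathcal{G}_m}=\NTi{x}{1}\NTi{y^*}{1}e^{-(V(y)-V(y^*))}$ (and their variants), from which both identities follow by taking expectations and iterating. Your approach instead performs a direct excursion decomposition at the common ancestor $w=x\wedge y$: you compute $\ee{(\NTi{x}{1})^2}$ from the geometric law, use a Wald identity for the ancestor case, and for the generic case exploit the multinomial structure of the children choices at $w$ together with the second falling factorial moment of a geometric variable. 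The paper's induction is shorter and avoids the delicate bookkeeping of nested sub-excursions; your method is more explicit and makes transparent the origin of the factor $2$ in \eqref{mom23}, namely the identity $\ee{(L-1)(L-2)}=2(1-p_0)^2/p_0^2$ for a geometric $L$. Both arguments rest on the same strong Markov property, applied either ``one generation at a time'' (paper) or ``all at once at the branching point'' (yours).
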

\begin{proof} A direct calculation leads to \eqref{mom22} when $x=y$. We proceed by induction on the generations for the general case. The result is obvious for $x=y=e$. Suppose now that Equations \eqref{mom22} and \eqref{mom23} are true for any $|x|\vee|y|\leq m$  for some integer $m$ and consider two vertices $x$ and $y$ such that $|x|\vee|y|\leq m+1$. If $|x|\vee|y|\leq m$, the result is direct. If $|x|\leq m$ and $|y|=m+1$, we consider the $\sigma$-algebra $\mathcal{G}_m=\sigma\paren{\mathcal{E},(\NTi{x}{1},|x|\leq m)}$. Given $\mathcal{G}_m$, $\NTi{y}{1}$ follows the negative binomial distribution $BN(\NTi{y^*}{1},1/(1+e^{-(V(y)-V(y^*))}))$. Then,
\begin{align*}
\ee{\NTi{x}{1}\NTi{y}{1}\big|\F_m}=\NTi{x}{1}\NTi{y^*}{1}e^{-(V(y)-V(y^*))}.
\end{align*}
 Taking the expectation leads to \eqref{mom22}. Finally, if $|x|=|y|=m+1$, same kinds of arguments show that
 $$
 \ee{\NTi{x}{1}\NTi{y}{1}\big|\F_m}=\NTi{x^*}{1}\NTi{y^*}{1}e^{-(V(x)-V(x^*))-(V(y)-V(y^*))},
 $$
  if $x^*\neq y^*$ and otherwise
 $$
 \ee{\NTi{x}{1}\NTi{y}{1}\big|\F_m}=\NTi{x^*}{1}(\NTi{x^*}{1}+1)e^{-(V(x)-V(x^*))-(V(y)-V(y^*))}
 $$
 and the result follows.
 \end{proof}

\begin{lemm}\label{lemtcheb} 
There is a constant $C$ such that for $n$ large enough,
\begin{align*}
&\pe{\sum_{i=1}^n\sum_{|x|\leq \Ldel}\1{\NTi{x}{i}-\NTi{x}{i-1}\geq n^{\theta}}\1{x\in\Gamma_n}\leq n^{1-\theta-\delta/8}\sum_{|x|\leq \Ldel}e^{-V(x)}\1{x\in\Gamma_n}}\\
\leq &\frac{C}{n^{1-\delta/8}} \frac{\sum_{|x|,|y|\leq \Ldel}H_{x\wedge y}e^{-V(x)-V(y)+V(x\wedge y)}\1{x,y\in\Gamma_n}}{\paren{\sum_{|x|\leq \Ldel}e^{-V(x)}\1{x\in\Gamma_n}}^2}, \quad \P\text{-a.s.}
\end{align*}
\end{lemm}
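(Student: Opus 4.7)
The plan is to apply Chebyshev's inequality to the quenched random variable
$$W:=\sum_{i=1}^n W_i,\qquad W_i:=\sum_{|x|\leq \Ldel}\1{\NTi{x}{i}-\NTi{x}{i-1}\geq n^{\theta}}\1{x\in\Gamma_n}.$$
By the strong Markov property of $\seq{X}$ at the successive returns $\Tn[i]$ to $e^*$, at fixed environment the variables $(W_i)_{1\leq i\leq n}$ are i.i.d.\ under $\Pe$ with the same law as $W_1$. Consequently $\Ee[W]=n\Ee[W_1]$ and the $\Pe$-variance of $W$ is bounded above by $n\Ee[W_1^2]$.

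The first step is to lower bound $\Ee[W_1]$. Recall from \eqref{defa}--\eqref{defb} that, conditionally on the environment, $\NTi{x}{1}$ vanishes with probability $1-a_x$ and, conditionally on $\NTi{x}{1}\geq 1$, is geometric with success parameter $1-b_x=1/H_x$, so
$$\pe{\NTi{x}{1}\geq n^\theta}=a_x\,b_x^{\lceil n^\theta\rceil-1}.$$
For $x\in\Gamma_n$ we have $H_x\leq n^{\theta+\delta/16}$, hence $n^\theta/H_x\leq n^{-\delta/16}$, which in turn gives $b_x^{\lceil n^\theta\rceil-1}\geq 1/2$ for $n$ large enough. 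Combining this with $a_x=e^{-V(x)}/H_x\geq e^{-V(x)}n^{-\theta-\delta/16}$ and summing over vertices $x\in\Gamma_n$ with $|x|\leq\Ldel$ yields
$$\Ee[W]\;\geq\;\tfrac{1}{2}\,n^{1-\theta-\delta/16}\sum_{|x|\leq \Ldel}e^{-V(x)}\1{x\in\Gamma_n}.$$

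For the second moment, expanding $W_1^2$ as a double sum over $x,y\in\Gamma_n$ and applying Markov's inequality gives $\pe{\NTi{x}{1}\geq n^\theta,\NTi{y}{1}\geq n^\theta}\leq n^{-2\theta}\,\ee{\NTi{x}{1}\NTi{y}{1}}$. In either case $x\leq y$ or $x,y$ incomparable, Lemma~\ref{ConVar} provides the uniform bound $\ee{\NTi{x}{1}\NTi{y}{1}}\leq 2\,e^{-V(x)-V(y)+V(x\wedge y)}H_{x\wedge y}$, whence
$$n\Ee[W_1^2]\;\leq\;2\,n^{1-2\theta}\sum_{|x|,|y|\leq \Ldel}H_{x\wedge y}\,e^{-V(x)-V(y)+V(x\wedge y)}\1{x,y\in\Gamma_n}.$$

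Since $\delta/8>\delta/16$, the threshold $n^{1-\theta-\delta/8}\sum_{|x|\leq\Ldel}e^{-V(x)}\1{x\in\Gamma_n}$ is at most $\Ee[W]/2$ for $n$ large enough (the case where this sum vanishes being trivial, as then $W=0$). Chebyshev's inequality therefore yields
$$\pe{W\leq n^{1-\theta-\delta/8}\sum_{|x|\leq \Ldel}e^{-V(x)}\1{x\in\Gamma_n}}\;\leq\;\frac{4\,n\Ee[W_1^2]}{\Ee[W]^2},$$
and injecting the two previous bounds produces the claimed inequality (with $C=32$, say). The only conceptually delicate point is the i.i.d.\ structure of $(W_i)_i$ at fixed environment, a direct consequence of the strong Markov property at returns to $e^*$; the remainder is a routine application of Chebyshev together with Lemma~\ref{ConVar}.
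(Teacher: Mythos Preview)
Your argument is essentially the same as the paper's: Chebyshev under $\Pe$ on the i.i.d.\ sum $W=\sum W_i$, a lower bound on $\Ee[W_1]$ via $a_x b_x^{\lceil n^\theta\rceil}$, and an upper bound on $\Ee[W_1^2]$ via Markov's inequality combined with Lemma~\ref{ConVar}. The structure is correct.

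There is, however, a small slip in your lower bound for $b_x^{\lceil n^\theta\rceil-1}$. You write ``$H_x\leq n^{\theta+\delta/16}$, hence $n^\theta/H_x\leq n^{-\delta/16}$'', but this inequality is reversed: the upper bound on $H_x$ gives a \emph{lower} bound on $n^\theta/H_x$, which goes the wrong way. The correct argument uses the \emph{lower} bound $H_x\geq n^\theta$ from $\Gamma_n^1$, yielding $(1-1/H_x)^{\lceil n^\theta\rceil}\geq (1-n^{-\theta})^{\lceil n^\theta\rceil}\to e^{-1}$, so the factor is bounded below by a positive constant (say $1/3$, not $1/2$) for $n$ large. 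This does not affect the conclusion, as the constant is absorbed into $C$.
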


\begin{proof}
As the excursions are i.i.d. under $\Pe$, we first use Tchebytchev inequality with probability measure $\Pe$ to obtain:
\begin{align}
&\pe{\sum_{i=1}^n\sum_{|x|\leq \Ldel}\1{\NTi{x}{i}-\NTi{x}{i-1}\geq n^{\theta}}\1{x\in\Gamma_n}\leq \frac{n}{2}\sum_{|x|\leq \Ldel}\pe{\NTi{x}{1}\geq n^{\theta}}\1{x\in\Gamma_n}} \nonumber \\
\leq& \frac{4}{n}\frac{\sum_{|x|,|y|\leq \Ldel}\pe{\NTi{x}{1}\wedge \NTi{y}{1}\geq n^{\theta}}\1{x,y\in\Gamma_n}}{\paren{\sum_{|x|\leq \Ldel}\pe{\NTi{x}{1}\geq n^{\theta}}\1{x\in\Gamma_n}}^2}\ , \label{BigPro}
\end{align}
where the numerator comes from the computation of the variance at fixed environment: 
\begin{align*} \Var^{\mathcal{E}}\Big( \sum_{i=1}^n\sum_{|x|\leq \Ldel}\1{\NTi{x}{i}-\NTi{x}{i-1}\geq n^{\theta}}\1{x\in\Gamma_n}\Big)& =n \Var^{\mathcal{E}}\Big(\sum_{|x|\leq \Ldel}\1{\NTi{x}{1} \geq n^{\theta}}\1{x\in\Gamma_n} \Big) \\
& \leq n\sum_{|x|,|y|\leq \Ldel}\pe{\NTi{x}{1}\wedge \NTi{y}{1}\geq n^{\theta}}\1{x,y\in\Gamma_n} .  \end{align*}
Remark now that for any $x\in\T$, $\Pe(\NTi{x}{1} \geq n^{\theta})=a_x( b_x)^{\lceil n^{\theta}\rceil} $, with $a_x$ and $b_x$ given in respectively \eqref{defa} and \eqref{defb}
(and $\lceil n^{\theta}\rceil$ stands for the smallest integer larger than $n^{\theta}$). As for $x\in\Gamma_n$, $H_x\in[n^{\theta},n^{\theta+\delta/16}]$, there is a constant $C$ such that for any $n\geq1$,
 \begin{align}\label{eqminmean}
\sum_{|x|\leq \Ldel}\Pe(\NTi{x}{1} \geq n^{\theta})\1{x\in\Gamma_n}\geq \frac{C}{n^{\theta+\delta/16}}\sum_{|x|\leq \Ldel}e^{-V(x)}\1{x\in\Gamma_n}.
 \end{align}
To bound the numerator in \eqref{BigPro}, we use Markov-type inequality:
 \begin{align*}
\sum_{|x|,|y|\leq \Ldel} \pe{\NTi{x}{1}\wedge \NTi{y}{1}\geq n^{\theta}} \1{x,y\in\Gamma_n}\leq&\frac{1}{n^{2\theta}}\sum_{x,y\leq \Ldel}\ee{\NTi{x}{1}\NTi{y}{1}}\1{x,y\in\Gamma_n}\\
\leq& \frac{2}{n^{2\theta}}\sum_{|x|,|y|\leq \Ldel}e^{-V(x)-V(y)+V(x\wedge y)}H_{x\wedge y}\1{x,y\in\Gamma_n}.
\end{align*}
The last bound of the previous equation is obtained by the control of $\ee{\NTi{x}{1}\NTi{y}{1}}$ given in Lemma \ref{ConVar}.
\end{proof}

 The next step is to control the numerator and the denominator of the bound obtained in the previous Lemma. We start with the easiest part, the upper bound for the mean of the numerator.

 \begin{lemm}\label{lemGn} For $n$ large enough,
$$
G_n:=\E\Big[\sum_{|x|,|y|\leq \Ldel}H_{x\wedge y}e^{-V(x)-V(y)+V(x\wedge y)}\1{x,y\in\Gamma_n}\Big]\leq Cn^{1+2(\kappa-1)(\zeta-\theta)-23\delta/16}. 
$$
\end{lemm}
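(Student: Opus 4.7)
\noindent\textbf{Proof plan for Lemma~\ref{lemGn}.} The plan is to decompose $G_n=J_n^{(0)}+J_n^{(1)}+J_n^{(2)}$ according to the relative position of $x$ and $y$ on the tree: diagonal ($x=y$), strict ancestor ($x<y$ or $y<x$), and both strict descendants of their common ancestor $w=x\wedge y$. Each piece is then bounded by combining a structural upper bound coming from the defining constraints of $\Gamma_n$, the many-to-one formula (Lemma~\ref{manytoone}) at $t=1$, and the tail estimate of Lemma~\ref{compHS}.

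For $J_n^{(0)}$, I would use $H_x\leq n^{\theta+\delta/16}$ on $\Gamma_n^1$ to reduce to $\E[\sum_{|x|\leq \Ldel}e^{-V(x)}\1{x\in\Gamma_n}]$. Many-to-one rewrites this as $\sum_{m\leq \Ldel}\P(H^S_m\in[n^\theta,n^{\theta+\delta/16}])\leq C\Ldel\,n^{-\theta(\kappa-1)}=Cn^{(\kappa-1)(\zeta-\theta)-\delta/2}$ thanks to Lemma~\ref{compHS}. The resulting exponent $\theta+(\kappa-1)(\zeta-\theta)-7\delta/16$ stays strictly below the target $1+2(\kappa-1)(\zeta-\theta)-23\delta/16$ using $\delta<(\kappa-1)(\zeta-\theta)$ and $\theta<\zeta\leq 1$. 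For $J_n^{(1)}$, the $\Gamma_n^2$ bound $\sum_{u\leq y}H_u\leq n^{1-7\delta/16}$ replaces $\sum_{x<y}H_x\1{x\in\Gamma_n}$ by the deterministic factor $n^{1-7\delta/16}$, and the remaining expectation is handled as in the previous step, giving $J_n^{(1)}\leq Cn^{1+(\kappa-1)(\zeta-\theta)-15\delta/16}$, again below the target.

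The main term is $J_n^{(2)}$. Fixing the common ancestor $w$, I would use the uniform $\Gamma_n^2$ bound $H_w\leq n^{1-7\delta/16}$ and split the weight as $e^{V(w)-V(x)-V(y)}=e^{-V(w)}\cdot e^{-(V(x)-V(w))}\cdot e^{-(V(y)-V(w))}$: many-to-one applied to $\sum_w e^{-V(w)}(\cdots)$ absorbs the first factor, and since the two excursions go through distinct children of $w$ into conditionally independent sibling subtrees, the inner double sum factorizes into the square of a single subtree sum. Writing $H_x=e^{-(V(x)-V(w))}H_w+\tilde H_x$ with $\tilde H_x:=\sum_{w<u\leq x}e^{V(u)-V(x)}$, the $\Gamma_n^1$ constraint implies in particular $\tilde H_x\leq n^{\theta+\delta/16}$; after reducing the remaining $\Gamma_n$-conditions to purely subtree conditions via a case analysis on whether $H_x$ is dominated by $\tilde H_x$ or by the ancestral contribution $e^{-(V(x)-V(w))}H_w$, each subtree contribution is controlled by Lemma~\ref{compHS} and produces a factor $n^{(\kappa-1)(\zeta-\theta)-\delta/2}$. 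Combining yields $J_n^{(2)}\leq Cn^{1-7\delta/16}\cdot n^{2(\kappa-1)(\zeta-\theta)-\delta}=Cn^{1+2(\kappa-1)(\zeta-\theta)-23\delta/16}$, exactly the target.

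The main obstacle is $J_n^{(2)}$: decoupling the two sibling-subtree sums while respecting the $\Gamma_n^1$ constraint is delicate, since $H_x$ depends on the ancestral path of $V$ through $H_w$, so the indicator $\1{x\in\Gamma_n}$ couples the subtree functional $\tilde H_x$ and the ancestral quantity $H_w$. Handling this coupling is what forces the case analysis above, after which Lemma~\ref{compHS} can be applied independently on each branch; the choice of the exponents $\theta+\delta/16$, $1-7\delta/16$ in the definition of $\Gamma_n$ is precisely tuned so that the losses in the case analysis add up to exactly $23\delta/16$ in the final bound.
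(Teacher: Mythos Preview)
Your decomposition into diagonal, strict-ancestor, and off-diagonal pieces is natural, and your treatment of $J_n^{(0)}$ and $J_n^{(1)}$ is fine. The gap is in $J_n^{(2)}$.

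You bound $H_w\leq n^{1-7\delta/16}$ pointwise and then claim ``many-to-one applied to $\sum_w e^{-V(w)}(\cdots)$ absorbs the first factor'', ending with $J_n^{(2)}\leq Cn^{1-7\delta/16}\cdot n^{2(\kappa-1)(\zeta-\theta)-\delta}$. But many-to-one at $t=1$ gives $\E\big[\sum_{|w|=i}e^{-V(w)}f(\cdots)\big]=\E[f(\cdots)]$ only for a \emph{fixed} generation $i$; you still have to sum over $i\leq \Ldel$. With your pointwise bound on $H_w$ there is nothing left to control that sum, so your estimate is really
\[
J_n^{(2)}\ \leq\ C\,n^{1-7\delta/16}\,\sum_{i=1}^{\Ldel}\Big(\sum_{k\leq \Ldel-i}\phi_{k,\zeta}\Big)^2
\ \leq\ C\,n^{1-7\delta/16}\,\Ldel\,n^{2(\kappa-1)(\zeta-\theta)-\delta},
\]
which carries an extra factor $\Ldel=n^{(\kappa-1)\zeta-\delta/2}$ and overshoots the target.

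The paper's fix is precisely \emph{not} to bound $H_w$ pointwise. It keeps the full $\Gamma_n^2$ sum constraint on the ancestral line, so after many-to-one the factor attached to generation $i$ is $\E\!\big[H_i^S\,\1{\sum_{j\leq i}H_j^S\leq n^{1-7\delta/16}}\big]$. The point is the deterministic inequality
\[
\sum_{i=1}^{\Ldel} H_i^S\,\1{\sum_{j\leq i}H_j^S\leq n^{1-7\delta/16}}\ \leq\ n^{1-7\delta/16},
\]
valid because the $H_i^S$ are nonnegative and one stops adding once the running sum exceeds the threshold. This simultaneously controls $H_w$ \emph{and} the sum over generations, yielding exactly $n^{1-7\delta/16}$ with no extra $\Ldel$. (Incidentally, the paper's decoupling of the two subtree sums is also simpler than your case analysis: it just uses the pointwise $\Gamma_n^2$ bound $H_v\leq n^{\zeta}$ to relax $\{H_x\geq n^\theta\}$ to the purely subtree event $\{e^{-V_v(x)}n^{\zeta}+H_x^{(v)}\geq n^\theta\}$, after which independence applies directly.)
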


\begin{proof}
Remark first that $G_n$ can be written in the following way
\begin{align*}
G_n&=  \sum_{i=1}^{\Ldel}\E\Big[ \sum_{|u|=i} e^{-V(u)}H_u \Big(\1{u\in\Gamma_n}+\sum_{\substack{v\neq\tilde{v}\\ v^*=\tilde{v}^*=u}}e^{-V_u(v)-V_u(\tilde{v})}\sum_{\substack{x\geq v\\|x|\leq\Ldel}}\sum_{\substack{y\geq \tilde{v}\\|y|\leq\Ldel}} e^{-V_v(x)} e^{-V_{\tilde{v}}(y)}  \1{x,y\in\Gamma_n}\Big)\Big]
\end{align*}
where we recall that for any $z \geq w$, $V_w(z)=V(z)-V(w)$ is the potential centered at $w$. Furthermore, for any $x\in\Gamma_n$ and any ancestor $v \leq x$, the variable $H_x$ can be decomposed as follows
$$H_x=e^{-V_v(x)} H_v + \sum_{v<z\leq x }  e^{V_v(z)-V_v(x)} \leq e^{-V_v(x)} n^{\zeta}+ H^{(v)}_x$$
where $H^{(v)}_x=\sum_{v\leq z\leq x }  e^{V_v(z)-V_v(x)}$.  Now remark that, for $u<v\leq x$ and $u< \tilde{v}\leq y$,
\begin{align*}
\set{u\in\Gamma_n}\subset&B_u\ \text{ and }\ 
 \set{x,y\in\Gamma_n}\subset A_{x} \cap A_{y} \cap B_u\ \text{ where}
\end{align*}
  $A_x:=\{n^{\theta}\leq e^{-V_v(x)} n^{\zeta}+ H^{(v)}_x  \},\ A_y:=\{n^{\theta}\leq e^{-V_{\tilde v}(y)} n^{\zeta}+ H^{({\tilde v})}_y\}$
 and $B_u:=\Big\{\sum_{z\leq u} H_z\leq n^{1-7\delta/16}\Big\}$. So 
\begin{align*}
&  \sum_{|u|=i} e^{-V(u)}H_u \Big(\1{u\in\Gamma_n}+\sum_{\substack{v\neq\tilde{v}\\ v^*=\tilde{v}^*=u}}e^{-V_u(v)-V_u(\tilde{v})}\sum_{\substack{x\geq v\\|x|\leq\Ldel}}\sum_{\substack{y\geq \tilde{v}\\|y|\leq\Ldel}} e^{-V_v(x)} e^{-V_{\tilde{v}}(y)}  \1{x,y\in\Gamma_n}\Big) \\
& \leq  \sum_{|u|=i} e^{-V(u)}H_u \un_{B_u} \Big(1+\sum_{\substack{v\neq\tilde{v}\\ v^*=\tilde{v}^*=u}}e^{-V_u(v)-V_u(\tilde{v})}\sum_{\substack{x\geq v\\|x|\leq\Ldel}} e^{-V_v(x)} \un_{A_{x}}  \sum_{\substack{y\geq \tilde{v}\\|y|\leq\Ldel}}  e^{-V_{\tilde{v}}(y)}  \un_{A_{y}}\Big).
\end{align*}
 The variables $H^{(v)}_x$ and $V_v(x)$ depend only on the  potential centered at $v$ and are independent of the other branches and of the potential of the vertices before $v$. The same is true for $H^{({\tilde v})}_y$ and $V_{\tilde v}(y)$. Therefore, conditioning to variables up to generation $i$ and using many-to-one Formula of Lemma \ref{manytoone} with $t=1$, we obtain
\begin{align}
G_n&\leq  \sum_{i=1}^{\Ldel} \e{H^S_i\1{\sum_{j\leq i}H^S_j\leq n^{1-7\delta/16}}}\Bigg(1+\E\Big[\sum_{\substack{v\neq\tilde{v}\\ v^*=\tilde{v}^*=e}}e^{-V(v)-V(\tilde{v})}\Big]\Big(\sum_{k=0}^{\Ldel-i-1}\phi_{k,\zeta}\Big)^2\Bigg) \label{31}
\end{align}
where  for any integer $k>0$ 
and any $b>\theta$, $\phi_{k,b}:=\p{e^{- S_k} n^{b}+H^{S}_{k}>n^{\theta}}$. The upper bound of $\phi_{k,b}$ will be used several times in the sequel so we start by giving a general estimation.  First,  $\phi_{k,b} \leq  \p{e^{- S_k} n^{b}>n^{\theta}/2}+\p{H^{S}_{k}>n^{\theta}/2}$. According to Lemma \ref{compHS}, there is a constant $C>0$ such that for any $n$ large enough, $\p{H^{S}_{k}>n^{\theta}/2}\leq {C}{n^{-(\kappa-1)\theta}}$. And, by Markov inequality, for any $0<\delta<(\kappa-1)(b-\theta)$,
 \begin{align*}
 	\p{e^{- S_k} >\frac{1}{2n^{b-\theta}}}\leq Cn^{(\kappa-1)(b-\theta)-\delta/2}\e{e^{-(\kappa-1-\frac{\delta}{2(b-\theta)})S_k}}=Ce^{k\psi(\kappa-\frac{\delta}{2(b-\theta)})}n^{(\kappa-1)(b-\theta)-\delta/2}.
 \end{align*} 
 Therefore, as $\psi\big(\kappa-\frac{\delta}{2(b-\theta)}\big)<0$, there exists $d>0$ such that 
 \begin{align}
 \phi_{k,b} \leq  {C}({n^{-(\kappa-1)\theta}} +  e^{- d k} n^{ (\kappa-1)(b-\theta)-\delta/2} ).  \label{phikb}
 \end{align}

 \noindent Then applying this inequality in our case that is to say when $b= \zeta$, this gives, as $\Ldel=n^{(\kappa-1)\zeta-\delta/2}\ $,  for $n$ large enough,
  \begin{align*}
 \forall i< L_n^\delta,\quad\sum_{k=0}^{\Ldel-i-1}\phi_{k, \zeta}\leq Cn^{(\kappa-1)(\zeta-\theta)-\delta/2}.
 \end{align*}
 Moreover, by Assumption \eqref{hyp1}, $\e{\sum_{v\neq\tilde{v}, |v^*|=|\tilde{v}^*|=1}e^{-V(v)-V(\tilde{v})}}$ is finite. Hence, to conclude the proof of the lemma, we only have to remark that $\sum_{i=1}^{\Ldel}H^S_i\1{\sum_{j\leq i}H^S_j\leq n^{1-7\delta/16}}\leq n^{1-7\delta/16}$. Indeed we add terms to the sum only while the sum stays smaller than $n^{1-7\delta/16}$ so the final result has to be smaller too.
\end{proof}

In the following lemma, we focus on the lower bound for  the denominator in Lemma \ref{lemtcheb}. 
Note that we also need a technical estimates which are gathered in Lemma \ref{techp}. Its proof is postponed at the end of this section.

 \begin{lemm}\label{binfmoy} There is a constant $\epsilon>0$, such that for $n$ large enough,
$$
\P^*\Big(\sum_{|x|\leq \Ldel}e^{-V(x)}\1{x\in\Gamma_n}< n^{(\kappa-1)(\zeta-\theta)-5\delta/8}\Big)\leq n^{-\epsilon}\ .
$$
\end{lemm}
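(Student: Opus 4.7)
The plan is to run a Paley--Zygmund type second moment argument on the sum $Z_n:=\sum_{|x|\leq\Ldel}e^{-V(x)}\1{x\in\Gamma_n}$, borrowing the architecture already used in the proof of Proposition~\ref{propenv1}. First, I would invoke Lemma~\ref{ctrlptgen} to localize at an early generation $\epsilon_n:=\lfloor \epsilon\log n\rfloor$ on which $\max_{|u|\leq\epsilon_n}|V(u)|\leq(\delta/16)\log n$ holds with $\P^*$-probability at least $1-n^{-b_1}$. On this event $\mathcal{A}_n$, for every descendant $x$ of a vertex $u$ with $|u|=\epsilon_n$, both $H_x$ and $\sum_{z\leq x}H_z$ are comparable to their $V_u$-centered analogues up to a multiplicative factor $n^{\pm\delta/16}$, so that $x\in\Gamma_n$ is implied by a slightly sharpened version of the same event written in terms of the subtree rooted at $u$. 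This allows one to write $Z_n\1{\mathcal{A}_n}\geq n^{-\delta/16}\sum_{|u|=\epsilon_n}Z^n_u$, where each $Z^n_u$ has precisely the form required by the concentration Lemma~\ref{lemconcen} (with $t_0=1$).

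Applying Lemma~\ref{lemconcen} then reduces the problem to producing a polynomial-in-$n$ lower bound for $\e{Z^n}$ together with a matching upper bound $\e{(Z^n)^2}\leq C\e{Z^n}^2 n^{o(1)}$, the remaining factor $a^{-\epsilon_n}=n^{-\epsilon\log a}$ being a strict negative power of $n$. For the first moment I would use the many-to-one Formula (Lemma~\ref{manytoone}) with $t=1$: because $\psi(1)=0$, the factor $e^{S_i}$ cancels with $e^{-V(x)}$, and one is reduced to bounding from below the sum over $i\leq\Ldel-\epsilon_n$ of
\begin{align*}
\P\Big(n^\theta\leq H^S_i\leq n^{\theta+\delta/32},\ S_i\geq 5\log n,\ \sum_{j\leq i}H^S_j\leq n^{1-\delta/2},\ \forall j\leq i,\ H^S_j\leq n^\zeta\Big).
\end{align*}
The lower bound in Lemma~\ref{compHS} gives $\P(H^S_i\geq n^\theta)\gtrsim n^{-(\kappa-1)\theta}$ as soon as $i\geq A\log n$, and the role of the announced technical Lemma~\ref{techp} is to show that the remaining path-constraints on $S_i$ and on $(H^S_j)_{j\leq i}$ cost only a subpolynomial factor; summing over $i\leq\Ldel=n^{(\kappa-1)\zeta-\delta/2}$ then yields $\e{Z^n}\gtrsim n^{(\kappa-1)(\zeta-\theta)-\delta/2-o(1)}$.

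For the second moment I would split $\e{(Z^n)^2}$ according to the common ancestor $x\wedge y$ of each pair, reproducing exactly the computation carried out in the proof of Lemma~\ref{lemGn}: applying many-to-one first at $x\wedge y$ and then independently on the two branches above it delivers a bound of the form
\begin{align*}
\e{(Z^n)^2}\leq C\sum_{k=0}^{\Ldel}\e{H^S_k\1{\sum_{j\leq k}H^S_j\leq n^{1-\delta/2}}}\Big(\sum_{j=0}^{\Ldel-k}\phi_{j,\zeta}\Big)^2,
\end{align*}
where $\phi_{j,\zeta}$ is the auxiliary probability already controlled in \eqref{phikb}. The geometric decay of $\phi_{j,\zeta}$ combined with the absorbing bound $\sum_k H^S_k\1{\cdots}\leq n^{1-\delta/2}$ used in the proof of Lemma~\ref{lemGn} give $\e{(Z^n)^2}\leq C n^{2(\kappa-1)(\zeta-\theta)-\delta+o(1)}$. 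Feeding these estimates into Lemma~\ref{lemconcen} and absorbing the $n^{-\delta/16}$ loss from the first step and the subpolynomial losses from the moment computations into the exponent $5\delta/8$ (the budget $\delta/2+\delta/16+o(1)<5\delta/8$ suffices) produces the announced probability bound.

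The main obstacle is the first moment estimate: one must verify that, conditionally on the rare event $\{H^S_i\approx n^\theta\}$, the additional constraints $S_i\geq 5\log n$, $\sum_{j\leq i}H^S_j\leq n^{1-\delta/2}$ and $H^S_j\leq n^\zeta$ for every $j\leq i$ hold with probability losing only a subpolynomial factor. This requires a fine analysis of the joint trajectory of $(S_j,H^S_j)_{j\leq i}$ on the atypical scale $H^S_i\sim n^\theta$, which is precisely the content of the postponed Lemma~\ref{techp}.
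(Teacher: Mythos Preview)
Your overall architecture matches the paper's: localize via Lemma~\ref{ctrlptgen}, apply Lemma~\ref{lemconcen}, and control the first and second moments of $Z^n$. The first moment treatment is essentially right. The gap is in the second moment.

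When you split $(Z^n)^2$ at the common ancestor $u=x\wedge y$, the weight you pick up is $e^{-2V(u)}$, not $H_u\,e^{-V(u)}$ as in $G_n$. After many-to-one with $t=1$ this produces the factor $\e{e^{-S_k}}$, not $\e{H^S_k}$. So the displayed bound with $H^S_k$ is not what the decomposition actually gives; it happens to be an upper bound (since $H^S_k\geq e^{-S_k}$), but then the absorbing trick $\sum_k H^S_k\,\1{\cdots}\leq n^{1-\delta/2}$ only yields $\e{(Z^n)^2}\leq C\,n^{1-\delta/2}\,n^{2(\kappa-1)(\zeta-\theta)-\delta}$, not $n^{2(\kappa-1)(\zeta-\theta)-\delta+o(1)}$. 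The resulting ratio $\e{(Z^n)^2}/\e{Z^n}^2$ is of order $n^{1-\delta/2}$, and the factor $a^{-\epsilon_n}=n^{-\epsilon\log a}$ from Lemma~\ref{lemconcen} (with $\epsilon$ forced small by Lemma~\ref{ctrlptgen}) cannot absorb it. Working directly with $e^{-S_k}$ does not help either: for $\kappa<2$ one has $\e{e^{-S_k}}=e^{k\psi(2)}$ with $\psi(2)>0$, so $\sum_k\e{e^{-S_k}}$ diverges.

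The paper's fix is to insert an additional harmless constraint $\forall\,u\le z\le x,\ V_u(z)\geq -B$ into $\Gamma_{n,u}^2$. In the first moment this costs nothing (this is $p_{3,i}$ in Lemma~\ref{techp}, bounded by $Ce^{-\lambda B}n^{-(\kappa-1)\theta}$, negligible for $B$ large). In the second moment it buys the crucial estimate
\[
\sum_{k\geq 1}\e{e^{-S_k}\1{S_k\geq -B}}\leq e^{(1-r)B}\sum_{k\geq 1}\e{e^{-rS_k}}=e^{(1-r)B}\sum_{k\geq 1}e^{k\psi(1+r)}<\infty
\]
for $r=(\kappa-1)\wedge 1-\delta$, since $\psi(1+r)<0$. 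This makes $\e{(Z^n)^2}\leq Cn^{2(\kappa-1)(\zeta-\theta)-\delta}$, so that $\e{(Z^n)^2}/\e{Z^n}^2$ is bounded and Lemma~\ref{lemconcen} applies. You need this extra ingredient; the Lemma~\ref{lemGn} computation cannot be reproduced verbatim here.
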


\begin{proof} 
We use the same strategy as in the proof of Proposition \ref{propenv1}: we cut the tree at an early generation to obtain independence and then use concentration Lemma \ref{lemconcen}. We first use Lemma \ref{ctrlptgen} to control $\overline{V}(u)$ on the very first generations of the tree: there exist $b,\epsilon>0$ such that $\p[*]{\overline {\mathcal{A}}_n }\leq n^{-b}$ where $\mathcal{A}_n=\set{\forall u\in\T_{\epsilon_n},|V(u)| \leq \frac{\delta}{8}\log n}$ and $\epsilon_n=\lfloor \epsilon\log n\rfloor$.
So for $n$ large enough, on the event $\mathcal{A}_n$,  
\begin{align} \forall u\in\T_{\epsilon_n},\ H_u\leq n^{2\delta/8} \epsilon_n \leq  n^{3\delta/8}\ \text{ and }\ \forall u\leq x,\ H_x^{(u)}\leq H_x  \leq  H_x^{(u)}+e^{-V_u(x)} n^{3\delta/8}. \label{set}
\end{align}
Recall that for any  $u\leq x$, $V_u(x)=V(x)-V(u)$ and $H_x^{(u)}=\sum_{u\leq y\leq x}e^{V_u(y)-V_u(x)}$. Fix now some constant $B>0$ and consider the collection of random variables : 
$$\forall n\geq 1,\ \forall u\in\T,\quad Z_u^n=\sum_{\substack{ x>u \\|x|\leq \Ldel}} e^{-V_u(x)} \1{x\in\Gamma_{n,u}}, \text{ with } \Gamma_{n,u}  =\Gamma_{n,u}^1\cap\Gamma_{n,u}^2$$
where the  $\Gamma_{n,u}^i$ are the following new sets of constraints on the environment
\begin{align*}
\Gamma_{n,u}^1&=\set{x\in\T\ /\ n^{\theta}\leq H^{u}_x\leq n^{\theta+\delta/16}/2,\ V_u(x)\geq 2\log n }\text{ and}\\
\Gamma_{n,u}^2&=\Big\{ x\in\T\ /\ \sum_{z\leq x} H^u_z\leq n^{1-3\delta/8}/2\text{ and }\forall u\leq z\leq x,\ V_u(z) \geq -B\text{ and }H^u_z\leq n^{\zeta}/2 \Big \} .	
\end{align*}
The definitions of $\Gamma_n$, $\Gamma_{n,u}$ and inequalities in $\eqref{set}$ imply that on $ \mathcal{A}_n$,
\begin{align*}
 & \sum_{|x|\leq \Ldel}e^{-V(x)}\1{x\in\Gamma_n}  \geq n^{-\delta/8}  \sum_{|u|=\epsilon_n}  Z_u^n\ ,
\end{align*}
so  concentration Lemma \ref{lemconcen} shows that, for some $\epsilon>0$ and $n$ large enough
\begin{align}\label{BorneZ}
\P^*\Big( \sum_{|x|\leq \Ldel}e^{-V(x)}\1{x\in\Gamma_n}\leq n^{-\delta/8}\e{Z^n}  \Big) 
\leq 8 \frac{ \e{\paren{Z^n}^2}}{ \e{Z^n}^2}n^{-\epsilon}+\P^*{(\overline {\mathcal{A}}_n)},
\end{align}
where $Z^n=\sum_{|x|\leq \Ldel-\epsilon_n} e^{-V(x)} \1{x\in \Gamma_{n,e}}.$
The next step is to control $\e{Z^n}$ and $\e{\paren{Z^n}^2}$. 
\smallskip

\noindent $\bullet$ {\it A lower bound for  the  mean  $\e{Z^n}$.} Lemma \ref{manytoone} with $t=1$, gives :
\begin{align*}
&\e{Z^n}\\
=&\sum_{i=0}^{\Ldel-\epsilon_n}\P\Big(n^{\theta}\leq H^S_i\leq n^{\theta+\delta/8}/2,S_i\geq 2\log n,\ \sum_{j\leq i} H^S_j\leq n^{1-3\delta/8}/2, \ \overline{H}^S_i\leq n^{\zeta}/2, \underline{S}_i\geq -B \Big)\\
\geq& \sum_{i=\lceil d \log n\rceil}^{\Ldel-\epsilon_n}\Big(\p{n^{\theta}\leq H^S_i}-\sum_{m=1}^5 p_{m,i}\Big),
\end{align*}
 where $\overline{r}_n:=\max_{j \leq n} r_j$ and $\underline{r}_n:=\min_{j \leq n} r_j $ for any sequence $\seq{r}$ and 
 $d$ is a constant which can be chosen as large as needed. The probabilities  
\begin{align*}
& p_{1,i}:=\p{S_i< 2\log n},\ p_{2,i}:=\p{n^{\theta+\delta/16}/2\leq H_i^S},\ p_{3,i}:=\p{n^{\theta}\leq H^S_i ,  \underline S_i<-B}\\
&	p_{4,i}:=\p{n^{\theta}\leq H^S_i \leq n^{\theta+ \delta}, \overline{H}^S_i> n^{\zeta}/2} \text{ and }\ p_{5,i}:= \P \Big ( n^{\theta}\leq H^S_i,\ \sum_{j\leq i} H^S_j\geq n^{1-3\delta/8}/2 \Big)
\end{align*}
are estimated in the following technical lemma which proof can be found at the end of the section.  
\begin{lemm}\label{techp} 
If we choose $d$ large enough, there is a constant $C>0$ such that for $n$ large enough and any $i\in\set{d\log n,\dots, n}$,
	\begin{align*}
		p_{1,i}\leq n^{-2(\kappa-1)\theta}&,\quad p_{2,i}\leq Cn^{-(\kappa-1)(\theta+\delta/16)},\quad p_{3,i}\leq C e^{- ((\kappa-1)\zeta-\delta) B} n^{-(\kappa-1) \theta}, \\
		p_{4,i}&\leq Cn^{-(\kappa-1)\theta-\delta/16)}\ \text{ and }\ p_{5,i}\leq Cn^{-(\kappa-1)\theta-\delta/16)}.
	\end{align*}
\end{lemm}

\noindent Collecting the different upper bounds given in the previous lemma, we obtain for large $n$, 
$$\sum_{m=1}^5p_{m,i} \leq  Cn^{- (\kappa-1) \theta}\paren{n^{-\delta/16}+e^{- \lambda B}}.$$ Therefore for $B$ and $n$ large enough,
\begin{align*}
&\e{Z^n} \geq \sum_{i=\left\lceil d \log n\right\rceil}^{\Ldel-\epsilon_n}\Big(\p{n^{\theta}\leq H^S_i}-\sum_{m=1}^5 p_{m,i}\Big) \geq C \Ldel n^{-(\kappa-1)\theta}=C  n^{(\kappa-1)(\zeta-\theta)-\delta/2}.
\end{align*}

\smallskip

\noindent $\bullet$ {\it An upper bound for $\e{(Z^n)^2}$.} We barely use the same arguments as in the proof of Lemma \ref{lemGn}: first  $\e{(Z^n)^2}$ can be written as
\begin{align*}
\sum_{i=1}^{\Ldel} \E\Big[\sum_{|u|=i} e^{-2V(u)}\Big(\1{u\in\Gamma_n}+\sum_{\substack{v\neq\tilde{v}\\ v^*=\tilde{v}^*=u}}e^{-V_u(v)-V_u(\tilde{v})}\sum_{x\geq v}\sum_{y\geq \tilde{v}} e^{-V_v(x)} e^{-V_{\tilde{v}}(y)}  \1{x,y\in\Gamma_n}\Big)\Big].
\end{align*}
Then in the same way as we have obtained \eqref{31} we get
\begin{align*}
\e{(Z^n)^2}&\leq  \sum_{i=1}^{\Ldel} \e{e^{-S_i}\1{S_i\geq -B}}\Bigg(1+\E\Big[\sum_{\substack{v\neq\tilde{v}\\ v^*=\tilde{v}^*=e}}e^{-V(v)-V(\tilde{v})}\Big]\Big(\sum_{k=0}^{\Ldel-i-1}\phi_{k, \zeta}\Big)^2\Bigg)
\end{align*}
and same arguments as in the proof of Proposition \ref{lemGn} (see Equation \eqref{phikb} and below) show that
$$\e{(Z^n)^2} \leq Cn^{2(\kappa-1)(\zeta-\theta)-\delta}\sum_{i=1}^{\Ldel} \e{e^{-S_i}\1{S_i\geq -B}}.$$
All that is left to do is to control the first part of the sum, again by a Markov inequality: as $\psi(2\wedge\kappa-\delta)<0$,
\begin{align*}
	\sum_{i=1}^{\Ldel} \e{e^{-S_i}\1{S_i\geq -B}}\leq e^{B} \sum_{i=1}^{\Ldel} \e{e^{-(1\wedge(\kappa-1)-\delta) S_i}} \leq e^{B} \sum_{i=1}^{\infty}e^{i\psi(2\wedge\kappa-\delta))}<\infty
\end{align*}
 and finally $\e{(Z^n)^2}\leq Cn^{2(\kappa-1)(\zeta-\theta)-\delta}$. The bounds obtained for  $\e{Z^n}$, $\e{(Z^n)^2}$ and Inequality \eqref{BorneZ} conclude the proof.
\end{proof}

We are now ready to prove the main result of this section, Proposition \ref{proprange0bis}.

\begin{proof}[Proof of Proposition \ref{proprange0bis}] 
We only have to collect all the previous results. Thanks to Lemma \ref{eqindep} we can separate the excursions 
and, as $\kappa\wedge2-\kappa\theta=1-\theta+(\kappa-1)(\zeta-\theta)$, thanks to Lemma \ref{binfmoy} we can introduce the variable $\widetilde{\Sigma}_n:=\sum_{|x|\leq \Ldel}e^{-V(x)}\1{x\in\Gamma_n}$. So these two Lemmata imply  
\begin{align*}
& \p[*]{\rang[n^{\theta}]  
 \leq n^{\kappa\wedge 2 -\kappa\theta-\delta}} \leq n^{-1}+n^{-\epsilon} \\
&+ \P^*\Big(\sum_{i=1}^n\sum_{|x|\leq \Ldel}\1{\NTi{x}{i}-\NTi{x}{i-1}\geq n^{\theta}}\1{x\in\Gamma_n}\leq n^{1-\theta-3\delta/8}\widetilde{\Sigma}_n\ ,\ \widetilde{\Sigma}_n \geq n^{(\kappa-1)(\zeta-\theta)-5\delta/8} \Big)
\end{align*}
 for some constant $\epsilon>0$.
Using now the quenched concentration Lemma \ref{lemtcheb}, {as the non-extinction probability is positive,} the probability of the right-hand side of the previous equation can be bounded by
\begin{align*}
&\frac{C}{n^{1-\delta/8}}\e{\paren{\widetilde{\Sigma}_n}^{-2}\sum_{|x|,|y|\leq \Ldel}H_{x\wedge y}e^{-V(x)-V(y)+V(x\wedge y)\1{x,y\in\Gamma_n}}\1{\widetilde{\Sigma}_n \geq n^{(\kappa-1)(\zeta-\theta)-5\delta/8}}}\\
\leq&Cn^{-1-2(\kappa-1)(\zeta-\theta)-11\delta/8}G_n\ .
\end{align*}
Finally, the bound of $G_n$ given in Lemma \ref{lemGn}  shows that 
$$
\p[*]{\rang[n^{\theta}]  \leq n^{\kappa\wedge 2 -\kappa\theta-\delta}}\leq \frac{1}{n}+\frac{1}{n^\epsilon}+\frac{1}{n^{\delta/16}}\ .
$$
This concludes the proof of the proposition.
\end{proof}

We finish the section by the proof of  Lemma \ref{techp} which is essentially based on simple estimates for sums of i.i.d. random variables.

\begin{proof}[Proof of Lemma \ref{techp}] 

The probability  $p_{1,i}$ has already been bounded in this paper, see \eqref{evSj}: as $i \geq \left\lceil d \log n\right\rceil$,  $p_{1,i} \leq n^{-D}$ with $D$ as large as needed if $d$ is large enough. The bound for $p_{2,i}$ is given in Lemma \ref{compHS}.

For the other $p_{.,i}$ we need some more work.  At the end, the calculus for $p_{3,i}$, $p_{4,i}$ and $p_{5,i}$ are very similar so we give the details for $p_{5,i}$ and then sketch the proofs for $p_{3,i}$ and $p_{4,i}$.

Fix some $A>0$ and let $i_n=\max(0,\lceil i-A\log n\rceil)$. Then $H^S_i=H^S_{i_n}e^{-S_{i_n,i}}+H^S_{i_n,i}$ where for $k\leq \ell$,
$$
S_{k,\ell}=S_\ell-S_k\text{ and } H^S_{k,\ell}:=\sum_{j=k+1}^\ell e^{S_{k,j}-S_{k,\ell}}\ .
$$
Note that $H^S_{i_n}$ is independent of $(S_{i_n,i},H^S_{i_n,i})$. First, we have that $p_{5,i}$ is smaller than
\begin{align*}
	  \P\Big(\widetilde{H}_{n,i}\geq n^{\theta} ,\ \sum_{j\leq i_n} H^S_j\geq n^{1-3\delta/8}/4,H^S_{i_n}\leq n^2 \Big) 
	 +\p{H^S_{i_n}\geq n^2}+\P\Big(\sum_{i_n< j\leq i} H^S_j\geq n^{1-3\delta/8}/4 \Big)
\end{align*}
with $\widetilde{H}_{n,i}:=n^2e^{-S_{i_n,i}}+H^S_{i_n,i} $. By independence, the first term above is smaller than   $$\p{\widetilde{H}_{n,i} \geq n^{\theta}} \P\Big(\sum_{j\leq i_n} H^S_j\geq n^{1-3\delta/8}/4\Big)\ .$$
As $\widetilde{H}_{n,i}$ has the same distribution as the random variable $n^2e^{-S_{i-i_n}}+H^S_{i-i_n} $, then $\p{\widetilde{H}_{n,i} \geq n^{\theta}} = \phi_{i-i_n,2}$ where $\phi_{k,b}$ was previously defined  and estimated in \eqref{phikb}.  So for $A$ large enough \begin{align*}
 \p{ \widetilde{H}_{n,i}\geq n^{\theta}} &\leq Cn^{-\theta(\kappa-1)}.
\end{align*}
Moreover, as $\zeta\leq1$, $\e{\paren{H^S_{j }}^{(\kappa-1)\zeta-\delta/16}}$ is bounded and as $(\kappa-1)\zeta\leq1$, 
\begin{align*}
	\E\Bigg[\Big(\sum_{j \leq  i_n }  H^S_{j }\Big)^{(\kappa-1)\zeta-\delta/16} \Bigg]\leq \sum_{j \leq  i_n }  \e{\paren{H^S_{j }}^{(\kappa-1)\zeta-\delta/16}}\leq C \Ldel\ .
\end{align*}
Markov inequality yields, for any $i\leq\Ldel$,
\begin{align*}
&\P\Big(\sum_{j\leq i_n} H^S_j\geq n^{1-3\delta/8}/4\Big)  \leq C\Ldel n^{- ((\kappa-1)\zeta-\delta/16)(1-3\delta/8)}\leq Cn^{-\delta/16}\ .
\end{align*}
Similarly $\P\big(\sum_{i_n< j\leq i} H^S_j\geq n^{1-3\delta/8}/4 \big) \leq  C  n^{- ((\kappa-1)\zeta-\delta/16)(1-3\delta/8)}\log n   $ and finally by Lemma \ref{compHS}, $\p{H^S_{i_n}\geq n^2} \leq C n^{-2 (\kappa-1)}$. As $\zeta>\theta$, for $\delta$ small enough, this yields
$$p_{5,i} \leq Cn^{- (\kappa-1)\theta-\delta/16}\ .$$
Same kind of arguments give the same bound for $p_{4,i}$. Finally, for $p_{3,i}$,  with the same decomposition as above we can write 
\begin{align*}
p_{3,i} & \leq \p{\widetilde{H}_{n,i}\geq n^{\theta} ,\underline S_{i_n}<-B}  +\p{H^S_{i_n}\geq n^2}+\P\big(  \min_{i_n \leq k \leq i }  S_{k} \leq -B \big) 
\end{align*}
 and by independence,  $\p{\widetilde{H}_{n,i}\geq n^{\theta} ,\underline S_{i_n}<-B}=\p{\widetilde{H}_{n,i}\geq n^{\theta}} \p{\underline S_{i_n}<-B}$. Let  $\lambda=(\kappa-1)\zeta-\delta$. As $\psi(1+ \lambda)<0$, for any $0 \leq l< i$,
 $$\P\Big( \min_{l \leq k \leq i }  S_{k} \leq -B \Big) \leq e^{- \lambda B} \frac{e^{ \psi(1+ \lambda) l}}{1-e^{ \psi(1+ \lambda)}}$$ 
 then
\begin{align*}
p_{3,i} \leq C \paren{e^{- \lambda B} n^{-(\kappa-1) \theta}+ n^{-2 \lambda}+e^{ \psi(1+ \lambda) A\log n }} \leq C e^{- \lambda B} n^{-(\kappa-1) \theta}\ . 
\end{align*}	
\end{proof}

 \subsection{Proof of Theorem \ref{Cortpsdet}}\label{sec3.3}
 
 This Theorem is a direct consequence of Theorem \ref{thm1} and of Remark \ref{localtimeroot}, we only give a proof for the case $\inf_{s \in [0,1]} \psi(s)=0$ with $\psi'(1) < 0$ and $\kappa\in(2,\infty]$ for $\theta>0$ such that $1-\kappa\theta> 1/2-\theta$.
  According to Remark \ref{localtimeroot}, 
for any $\epsilon>0$, 
\begin{align}\label{cvgT}
	\lim_{n\to\infty}\p[*]{\Tn[\lfloor n^{(1+\epsilon)/2}\rfloor]\geq n}=1.
\end{align}
Then fix some $\epsilon>0$ and denote $\theta_\epsilon=2\theta/(1+\epsilon)$. Suppose that $\epsilon$ is small enough to have $2-\kappa\theta_\epsilon\geq 1-\theta_\epsilon$. For $n\geq2$,
\begin{align*}
\p[*]{\frac{\log^+ R^{n}_{n^{\theta}}}{\log n}\geq 1-\kappa\theta+2\epsilon}&\leq  \p[*]{\frac{\log^+  \rangb[\lfloor n^{(1+\epsilon)/2}\rfloor]{n^{\theta}}}{\log n}\geq 1-\kappa\theta+2\epsilon}+\p[*]{\Tn[\lfloor n^{(1+\epsilon)/2}\rfloor]<n}.
\end{align*}
Then, as $\theta=\frac{1+\epsilon}{2}\theta_\epsilon$,
\begin{align*}
\p[*]{\frac{\log^+  \rangb[\lfloor n^{(1+\epsilon)/2}\rfloor]{n^{\theta}}}{\log n}\geq 1-\kappa\theta+2\epsilon}&=\p[*]{\frac{\log^+  \rangb[\lfloor n^{(1+\epsilon)/2}\rfloor]{\paren{n^{(1+\epsilon)/2}}^{\theta_\epsilon}}}{\log n^{(1+\epsilon)/2}}\geq \frac{2(1-\kappa((1+\epsilon)/2)\theta_\epsilon+2\epsilon)}{1+\epsilon}}\\
&\leq\p[*]{\frac{\log^+  \rangb[\lfloor n^{(1+\epsilon)/2}\rfloor]{\lfloor n^{(1+\epsilon)/2}\rfloor^{\theta_\epsilon}}}{\log n^{(1+\epsilon)/2}}\geq 2-\kappa\theta_\epsilon+\frac{2\epsilon}{1+\epsilon}}.
\end{align*}
Then \eqref{cvgT} and Theorem \ref{thm1} show that for any $\epsilon$ small enough, 
$$
\lim_{n\to\infty}\p[*]{\frac{\log^+  R^{n}_{n^{\theta}}}{\log n}\geq (1-\kappa\theta+2\epsilon)}=0\ .
$$
The lower bound and the other cases can be obtained with similar arguments.

  \section{Estimation of the c.d.f.}\label{sec:moments}

 In this section we prove Theorem \ref{thm2.3} and Theorem \ref{thm:ControlEstFModel}. For that purpose, we use the same global strategy as in \cite{DieLer:2017}, that is to say we begin by estimating the moments of $\rho$ (defined in \eqref{defrho}) and we use these estimators to build a family of estimators $\est{F}^\alpha$ of the c.d.f. We finally choose an estimator of the family which minimizes the risk bound.
 The important difference comparing to \cite{DieLer:2017} is that in our case the state space is now a Galton-Watson tree instead of $\Z$. Recall that in this part, we assume not only \ref{ass1} but also \ref{ass2}: 
	\begin{itemize}
	\item the reproduction law of the Galton-Watson is bounded: $\exists K>0,\quad \p{\nu\leq K}=1$ .
	\item given the tree up to generation $n$ and the number of children $\nu_x$ of some $x\in\T$ such that $|x|=n$, the variables $\paren{\omega_{x_i}}_{1\leq i\leq \nu_x}$ are i.i.d. with the same distribution as some variable  $\omega$.
\end{itemize}

  \subsection{Estimation of the moments of $\rho$}

  First remark that the marked tree 
  $(x\in\T,\ \NT)$ is a sufficient statistic for the trajectory $\paren{X_t}_{t\in[0,\Tn]}$.
  Indeed for any admissible sequence $\paren{a_k}_{k\in \set{0,\dots, t^{n)}}}$, the likelihood is
 $$
\p{X_0=a_0,\dots,X_{\Tn}=a_{t^{(n)}}\big|\T}=\prod_{x\in\T}\e{\paren{\frac{1}{1+\sum_{i=1}^{\nu_x}e^{-\omega_{x_i}}}}^{n^{(n)}_x}\prod_{i=1}^{\nu_x}\paren{\frac{e^{-\omega_{x_i}}}{1+\sum_{i=1}^{\nu_x}e^{-\omega_{x_i}}}}^{n^{(n)}_{x_i}}\Big|\T}
 $$ 
 where $n_x^{(n)}:= \sum_{k=1} ^{t^{(n)}} \1{a_{k-1}=x^*, \ a_k= x}$. It is then natural to construct our estimator using these random variables.  
  A second important point is that, for a fixed environment and tree, that is to say under $\Pe$,  
 $$
 \forall x\in\T,\quad \Pe \Big({\NT=j\Big|\paren{\NT[y]}_{|y|<|x|}}\Big)=\begin{pmatrix}
\NT[x^*]+j-1\\
 j
 \end{pmatrix} {\rho_x ^i(1-\rho_x)^j}
 $$
 where $\rho_x =\paren{1+e^{-\omega_x}}^{-1}$.
So the moments of $\rho$ are convenient quantities to estimate and we first focus on them. The estimator of $m^{\alpha,\beta}=\e{\rho^\alpha(1-\rho)^\beta}$ is constructed following the same strategy as in \cite{DieLer:2017}.  is based on the simple combinatoric equality: 
\begin{align}\label{eq:combfac1}
\forall i\geq \alpha,\qquad \sum_{j\ge \beta}\binom{i-\alpha+j-\beta}{i-\alpha}a^{i+1}(1-a)^{j}&=a^{\alpha}(1-a)^\beta\enspace.
\end{align}
 Indeed, denoting for any integers $\alpha,\beta,i,j\geq0$,
 \begin{align}\label{eq:Estmalpha}
	 \Phi_{\alpha,\beta}(i,j)=\1{i\geq\alpha+1,j\geq\beta}\frac{\binom{i+j-(\alpha+1+\beta)}{i-(\alpha+1)}}{\binom{i+j-1}{j}}\ ,
\end{align}
we have for any $x\in\T$,
 \begin{align*}
\ee{\Phi_{\alpha,\beta}(\NT[x^*],\NT[x])\Bigg|\paren{\NT[y],\ |y|<|x|}}=&\sum_{j\ge \beta}\Phi_{\alpha,\beta}(\NT[x^*],j)\Pe \Big({\NT=j\Big|\paren{\NT[y],\ |y|<|x|}}\Big)\\
=&\1{\NT[x^*]\geq \alpha +1}\rho_x^{\alpha}(1-\rho_x)^\beta\enspace.
\end{align*}
It is then natural to estimate the moments $ m^{\alpha,\beta}$ of $\rho$ with the following random variable: 
 $$\est[n]{m}^{\alpha,\beta}=\frac{1}{\e{\nu}\rang[\alpha+1]}\sum_{x\in\T}\sum_{\substack{ y,s.t.\\ y^*=x}}\Phi_{\alpha,\beta}(\NT,\NT[y])$$
 where we use the convention: $0/0=0$. Remark that the series here are in fact simple sums as only a finite number of $\NT$ are non zero. These estimators satisfy a concentration property:

 \begin{prop}\label{thm:BorneRisqueGW}
Assume \ref{ass1} and \ref{ass2} and fix $\alpha,\beta\in\Z_+$. There is a positive constant $C$, such that for any integer $n\geq0$ and any real number $z>0$,
 	$$
 	\p{\absj{\est[n]{m}^{\alpha,\beta}-m^{\alpha,\beta}}\geq \frac{\alpha!\beta!}{(\alpha+\beta)!}\frac{K}{\e{\nu}}\sqrt{\frac{z+2\log\rang[\alpha+1]}{2\rang[\alpha+1]}}}\leq Ce^{-z}\enspace.
 	$$
 \end{prop}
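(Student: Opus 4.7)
The plan is to follow the concentration strategy of \cite{DieLer:2017} adapted to the Galton--Watson setting. Three ingredients are central: (i) the quenched conditional identity
\[
\ee{\Phi_{\alpha,\beta}(\NT[x^*],\NT[x])\,|\,(\NT[y])_{|y|<|x|}} \;=\; \1{\NT[x^*]\geq\alpha+1}\rho_x^{\alpha}(1-\rho_x)^{\beta},
\]
established above the statement from \eqref{eq:combfac1} and the negative-binomial distribution of $\NT[x]$ given $\NT[x^*]$ under $\Pe$; (ii) the pointwise upper bound $\Phi_{\alpha,\beta}(i,j)\leq \alpha!\beta!/(\alpha+\beta)!$, which I would verify by the rewriting
\[
\Phi_{\alpha,\beta}(i,j) \;=\; \frac{\alpha!\beta!}{(\alpha+\beta)!}\cdot\frac{\binom{j}{\beta}\binom{i-1}{\alpha}}{\binom{i+j-1}{\alpha+\beta}},
\]
the last factor being a hypergeometric probability; and (iii) Assumption \ref{ass2}, providing i.i.d.\ children environments and the uniform bound $\nu\leq K$.

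Setting $U_x := \1{\NT[x]\geq\alpha+1}\sum_{y:y^*=x}\Phi_{\alpha,\beta}(\NT[x],\NT[y])$, ingredients (ii) and (iii) give $|U_x|\leq K\alpha!\beta!/(\alpha+\beta)!$, and $\est{m}^{\alpha,\beta}=\frac{1}{\e{\nu}\rang[\alpha+1]}\sum_{x\in\T}U_x$. I would then enumerate the vertices of $\T$ in breadth-first order and show that the centered sequence $\bigl(U_x-\nu_x\,m^{\alpha,\beta}\1{\NT[x]\geq\alpha+1}\bigr)$ forms a bounded-differences martingale difference sequence with respect to a suitable filtration $(\mathcal{F}_k)$: a two-step averaging (applying (i) first with the environment frozen, then averaging the children's $\rho_y$ against their i.i.d.\ prior law via \ref{ass2}) shows that the conditional mean of each increment vanishes. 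Azuma--Hoeffding applied conditionally on $\set{\rang[\alpha+1]=R}$ then gives, for every $\tau>0$,
\[
\p{|\est{m}^{\alpha,\beta}-m^{\alpha,\beta}|\geq\tau,\,\rang[\alpha+1]=R}\;\leq\; 2\exp\Big(-\frac{2R\tau^{2}(\e{\nu})^{2}}{(K\alpha!\beta!/(\alpha+\beta)!)^{2}}\Big),
\]
and plugging $\tau=\frac{K}{\e{\nu}}\frac{\alpha!\beta!}{(\alpha+\beta)!}\sqrt{(z+2\log R)/(2R)}$ produces a bound of $2e^{-z}/R^{2}$. A union bound over $R\in\N^{*}$, with total weight $\sum R^{-2}<\infty$, contributes the additive $2\log\rang[\alpha+1]$ correction inside the square root of the announced inequality, absorbed into the final constant $C$.

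The main obstacle is the rigorous construction of the filtration $(\mathcal{F}_k)$ ensuring that the increments are conditionally centred under the annealed measure $\P$. The quenched identity (i) handles only the conditional mean of $\Phi_{\alpha,\beta}$ when the environment is frozen, whereas the target centring additionally requires averaging $\rho_y$ against its prior law for each child $y$ of $x$. Since $\rho_y$ can influence the walk's behaviour both near $y^{*}$ (via the quenched transition probabilities at $y^{*}$) and at deeper vertices in $y$'s subtree, the filtration must be designed so that $\rho_y$ is revealed together with $\NT[y]$ at the appropriate step while remaining independent of the past. Handling this dependence, together with the sibling correlations (which affect the joint but not the marginal distributions needed in (i)) and the random normaliser $\rang[\alpha+1]$ (treated by the union bound over $R$), constitutes the technical heart of the proof and follows the one-dimensional scheme of \cite{DieLer:2017} with tree-specific adaptations.
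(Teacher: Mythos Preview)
Your outline follows the same architecture as the paper's proof: breadth-first enumeration, a bounded martingale difference sequence built from the $\Phi_{\alpha,\beta}$, Azuma/McDiarmid, and a union bound over the possible values of $\rang[\alpha+1]$. The hypergeometric bound $\Phi_{\alpha,\beta}\le\binom{\alpha+\beta}{\alpha}^{-1}$ is exactly what the paper uses. Two technical points, however, need correction.

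\textbf{Centering.} Your increments are $U_x-\nu_x\,m^{\alpha,\beta}\1{\NT[x]\ge\alpha+1}$. Summing gives $\e{\nu}\rang[\alpha+1]\,\est{m}^{\alpha,\beta}-m^{\alpha,\beta}\sum_x\nu_x\1{\NT[x]\ge\alpha+1}$, which is \emph{not} $\e{\nu}\rang[\alpha+1]\bigl(\est{m}^{\alpha,\beta}-m^{\alpha,\beta}\bigr)$: there is a residual $m^{\alpha,\beta}\sum_x(\e{\nu}-\nu_x)\1{\NT[x]\ge\alpha+1}$ you never control. The paper centres by the deterministic $\e{\nu}\,m^{\alpha,\beta}$ instead; the annealed conditional mean is computed in one step using the multi-type Galton--Watson structure of the marked tree $(x,\NT[x])$ (their equation for $m_{i,j}$, quoting \cite{AidRap}). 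This simultaneously resolves the ``main obstacle'' you flag about the filtration: once one knows that, under $\P$, the children's edge local times given the parent's depend only on the parent's type, the filtration $\mathcal F_k=\sigma(\NT[e];\,\nu_j,(\NT[y])_{y^*=y_j},\,j\le k)$ works directly, with no need for a quenched--then--prior two-step averaging.

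\textbf{Random number of increments.} Writing ``Azuma--Hoeffding applied conditionally on $\{\rang[\alpha+1]=R\}$'' is not legitimate: conditioning on this event destroys the martingale property (the event depends on the whole trajectory of increments). The paper handles this via optional stopping. It introduces $\tau_m=\inf\{k>\tau_{m-1}:\NT[y_k]\ge\alpha+1\}$, checks that $(M_{\tau_m})_{m\ge0}$ is a genuine martingale with increments in an interval of length $K\binom{\alpha+\beta}{\alpha}^{-1}$, applies McDiarmid \emph{unconditionally} for each fixed $m$, and only then intersects with $\{\rang[\alpha+1]=m\}$ (on which $\sum_k Z_k=M_{\tau_m}$). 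The union bound over $m$ then produces the $2\log\rang[\alpha+1]$ correction exactly as you anticipate.
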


\begin{proof} 
	We consider the following construction of the Galton-Watson tree $\T$. Define a sequence of i.i.d.r.v. $\seq[k]{\nu}$ distributed as $\nu$.

We  introduce the sequence $\seqb[k]{y}$ to designate the vertices of the tree $\T\cup\set{e^*}$ in level order, generation by generation. We start with $y_0=e^*$, $y_1=e$ which has $\nu_1$ children. These children are ordered arbitrarily, the first vertex  is called $y_2$ and has $\nu_2$ children, the second one $y_3$ with $\nu_3$ children and so on. Once no more vertices are present at this generation, we move to the next one and so on  (see Figure \ref{fig3}). 
\begin{figure}[ht]
\begin{center}
{\scalebox{1.5} {\input{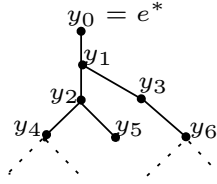} }}
\caption{Vertices numbering } \label{fig3}
\end{center}
\end{figure}
The sequence can be finite with last element $y_{\NM}$. In this case, we start a new tree with a root $y_{\NM+1}$ with $\nu_{\NM+1}$ children and for $k\geq \NM+1$, we put $\NT[y_k]=0$. 
Using the sequence $\seq[k]{y}$, the estimators $\est[n]{m}^{\alpha,\beta}$ have the following expression: 
  $$
  \est[n]{m}^{\alpha,\beta}=\frac{1}{\e{\nu}\rang[\alpha+1]}\sum_{k=1}^{\infty}\sum_{\substack{ y,s.t.\\ y^*=y_k}}\Phi_{\alpha,\beta}(\NT[y_k],\NT[y])\ .
  $$ 
  
  Denote by $\seq[k]{\F}$ the following filtration: $\F_0
=\sigma\paren{\NT[e]}$ and for $k\geq 0$, 
\begin{align}\label{defFil}
\F_{k+1}=\sigma \paren{\F_k,\ \sigma\paren{\nuk[k+1],\ (\NT[y], y^*=y_{k+1})}}\ .	
\end{align} 
 
 In the following lemma we introduce a martingale which is the main tool to obtain Proposition \ref{thm:BorneRisqueGW}. 
  \begin{lemm}\label{lem:Martingaleb}  Let $\alpha,\beta\in\Z_+$. For probability measure $\P$, the sequence $\paren{Z^{\alpha,\beta}_{k}}_{k\geq0}$ defined, for all integer $k\geq0$, by
 \begin{align*}
 Z^{\alpha,\beta}_{0}=0\ \text{ and }\ \forall k\geq 1,\ Z^{\alpha,\beta}_{k}=\1{\NT[y_{k}]\geq\alpha+1} \Big(\sum_{y, y^*=y_{k}}\Phi_{\alpha,\beta}\left(\NT[y_{k}],\NT[y]\right)-\e{\nu}m^{\alpha,\beta}\Big)
 \end{align*}
 	is a martingale difference sequence with respect to $\paren{\F_{k}}_{k\in\N}$.
 \end{lemm}
 
 \begin{proof} 
 
  First recall the following result which is a slight variation of Lemma 3.1 in \cite{AidRap}. It can be proved using the same arguments: for any $n\geq1$, under $\P$ the marked tree $\paren{x\in\T,\ \NT}$ is a multi-type Galton-Watson tree. Its initial type is $n$ and its mean matrix is given by: 
 	\begin{align}\label{lemAdR}
 	\forall i,j\geq0,\quad m_{i,j}=\E\Big[\sum_{ |x|=1}\1{\NTi{x}{i}=j}\Big]&=\begin{pmatrix}
                                                                     	i-1+j\\
                                                                     	j
                                                                     \end{pmatrix} \E\Big[\sum_{|x|=1}\frac{e^{-j\omega_{x}}}{\paren{1+e^{-\omega_{x}}}^{i+j}}\Big]\\
                                                                     &=\begin{pmatrix}
                                                                     	i-1+j\\
                                                                     	j
                                                                     \end{pmatrix}\e{\nu}\e{\left(1-\rho\right)^{j} \rho^{i} }\ .\nonumber
 	\end{align}
 
  Now, $Z^{\alpha,\beta}_{k}$ is obviously an integrable and $\F_k$-measurable random variable. Moreover, according to \eqref{lemAdR}, for any $i\geq\alpha+1$ and any $k\geq1$,
 \begin{align*}
 & \E\Big[\1{\NT[y_{k}]=i}\sum_{y, y^*=y_{k}}\Phi_{\alpha,\beta}\left(\NT[y_{k}],\NT[y]\right)\Big|\F_{k-1}\Big]\\
& = \1{\NT[y_{k}]=i}\E\Big [\sum_{|x|=1}\Phi_{\alpha,\beta}\left(i,\NTi{x}{i}\right)\Big] 
	= \1{\NT[y_{k}]=i}\sum_{j=0}^\infty\Phi_{\alpha,\beta}\left(i,j\right)\E\Big [ \sum_{|x|=1}\1{N^{T^i}[x]=j} \Big] \\ 
& = \1{\NT[y_{k}]=i}\sum_{j=\beta}^\infty\binom{i-(\alpha+1)+j-\beta}{i-(\alpha+1)} \e{\nu} \e{\left(1-\rho\right)^{j} \rho^{i} }= \1{\NT[y_{k}]=i}\e{\nu}m^{\alpha,\beta}\ .
\end{align*}
From this last equality we easily obtain $\e{Z^{\alpha,\beta}_{k}\big|\F_{k-1}}=0$ which completes the proof of the lemma.
 \end{proof}

We can now prove Proposition \ref{thm:BorneRisqueGW}. We only have to show that:
 \begin{align}
   	\forall z>0,\ \p{\left|\sum_{k=1}^\infty Z^{\alpha,\beta}_{k}\right|\geq \frac{\alpha !\beta !}{\paren{\alpha+\beta)!}}K\sqrt{\rang[\alpha+1]\paren{z+2\log \rang[\alpha+1]}/2 }}\leq Ce^{-z}\ . \label{eq8}
   \end{align}

 We know, according to Lemma \ref{lem:Martingaleb}, that the process $\seq[k]{M}$ defined by 
 $$M_0=0\ \text{ and }\ \forall k\leq1,\ M_k=\sum_{i=1}^{k} Z^{\alpha,\beta}_{i}$$ 
 is a martingale with respect to $\seq[k]{\F}$.  We could directly apply a concentration inequality but to obtain a better bound we first remark that some of the increments $Z^{\alpha,\beta}_{k}$ are zero so we consider the sequence of stopping times (with respect to the filtration $\seq[k]{\F}$):
 \begin{align}\label{deftau}
 	&\tau_0=0\text{ and }\forall m\geq1,\ \tau_{m+1}=\inf\set{k> \tau_m,\  \NT[y_k]\geq\alpha+1}
 \end{align}
 where $\inf\varnothing=\infty$. As the other variables $Z^{\alpha,\beta}_{k}$ are null, we have that $$M_{\tau_m}=\sum_{i=1}^{m}Z^{\alpha,\beta}_{\tau_i}\ \text{ and }\ \rang[\alpha+1]=\sum_{m=1}^\infty\1{\tau_m<\infty}$$
 where if $\tau_m=\infty, M_\infty=\sum_{i=1}^{\infty} Z^{\alpha,\beta}_{i}$ which is a sum with only a finite number of non-zero terms.
  Now an elementary combinatoric argument shows that, for $i\geq\alpha+1$ and $j\geq\beta$, 
 $$\binom{i-1+j-\alpha-\beta}{i-1-\alpha}\binom{\alpha+\beta}{\alpha}\leq\binom{i-1+j}{i-1}\ .$$
 Thus, for any $i,j\geq0$, 
 $$0\le \Phi_{\alpha,\beta}(i,j)\le \binom{\alpha+\beta}{\alpha}^{-1}=\Phi_{\alpha,\beta}(\alpha+1,\beta)\leq 1$$
 and for any $m\geq0$, $|Z^{\alpha,\beta}_{m}|\leq K$ where $K$ is the upper bound of the support of $\nu$.
 Moreover, for any $m\in\N$, the stopped process $\paren{M_{\tau_m\wedge l}}_{l\geq0}$ is still a martingale. For any $m\in\N$, $\absj{M_{\tau_m\wedge l}}\leq mK$. Therefore the stopped martingale $\paren{M_{\tau_m\wedge l}}_{l\geq0}$ is uniformly integrable and Doob's optional sampling theorem implies that $\left(M_{\tau_m}\right)_{m\in\N}$ is a martingale. As for any $m\geq 0$,
$$
M_{\tau_{m-1}}-\e{\nu}m^{\alpha,\beta}\leq M_{\tau_{m}}\leq M_{\tau_{m-1}}-\e{\nu}m^{\alpha,\beta}+K,
$$
 McDiarmid's inequality (see Theorem 6.7 in \cite{McD89}) leads now to the following concentration result: for any integer $m\geq0$ and any real $z>0$,
 \begin{align*}
 \P\Big(\absj{M_{\tau_m}}\geq \binom{\alpha+\beta}{\alpha}^{-1}K\sqrt{mz/2}\Big)\leq 2e^{-z}\enspace.
 \end{align*}
Then a union bound concludes the proof:
  \begin{align*}
 	&\p{\Big|\sum_{k=1}^\infty Z^{\alpha,\beta}_{k}\Big|\geq \frac{\alpha !\beta !}{\paren{\alpha+\beta)!}}K\sqrt{\rang[\alpha+1]\paren{z+2\log \rang[\alpha+1]}/2}}\\
 =& \p{\bigcup_{m=1}^\infty\set{\rang[\alpha+1]=m\ ;\ \absj{M_{\tau_m}}\geq \binom{\alpha+\beta}{\alpha}^{-1}K\sqrt{\frac{m}{2}\paren{z+2\log{m}}}}}\\
 \leq& \sum_{m=1}^\infty\p{\absj{M_{\tau_m}}\geq \binom{\alpha+\beta}{\alpha}^{-1}K\sqrt{\frac{m}{2}\paren{z+2\log{m}}}} \leq 2e^{-z}\sum_{m\geq1}e^{-2\log{m}}= \frac{\pi^2}{3}e^{-z}\ .
   \end{align*}
 
   \end{proof}
   
   \subsection{Estimation of the cumulative distribution function}\label{sec:cdf}
 In this section we prove Theorem \ref{thm2.3} using the estimation of the moments $m^{\alpha,\beta}$ to approximate the cumulative distribution function $F$ of $\rho$.  

\smallskip
 
\noindent Define for any $u\in[0,1]$ and any $\alpha\in\N^*$,
\begin{align*}
F^\alpha(u)=\sum_{k=0}^{\lfloor\alpha u\rfloor-1}\binom{\alpha-1}{k}m^{k,\alpha-1-k}\enspace,
\end{align*}
where $x\to\lfloor x\rfloor$ is the floor function and $\sum_{k=0}^{-1}=0$.
When the function $F$ is regular enough, it is a classical result that it is well approximated by $F^\alpha$. A precise statement is given in Lemma 6 of \cite{DieLer:2017} which we recall here.
\begin{lemm}\label{lem:biasF} Suppose that the function $F\in\mathcal{C}^\gamma$ for some $\gamma\in(0,2]$. For any integer $\alpha\geq 1$,
	$$
	\max_{0\leq \ell\leq \alpha}\ \absj{F(\ell/\alpha) -F^\alpha(\ell/\alpha)}\leq \frac{\|F\|_{\gamma}}{2^\gamma(\alpha+1)^{\gamma/2}}\ .
	$$
\end{lemm}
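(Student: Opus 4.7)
The plan is to reinterpret $F^\alpha(\ell/\alpha)$ as an expectation against a Beta distribution, reducing the claim to a one-line variance bound. Since $m^{k,\alpha-1-k}=\E[\rho^{k}(1-\rho)^{\alpha-1-k}]$, the sum defining $F^\alpha(\ell/\alpha)$ equals $\P(\mathcal{B}(\alpha-1,\rho)\leq \ell-1)$ where the binomial is built conditionally on $\rho$. Realising that binomial through i.i.d.\ uniform draws $U_{1},\dots,U_{\alpha-1}$ independent of $\rho$, the event $\{\mathcal{B}(\alpha-1,\rho)\leq \ell-1\}$ coincides with $\{\rho<U_{(\ell)}\}$, where $U_{(\ell)}$ is the $\ell$-th order statistic, independent of $\rho$ and of law $\mathrm{Beta}(\ell,\alpha-\ell)$. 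Since $F$ is H\"older, hence continuous, this yields the identity
\begin{align*}
F^\alpha(\ell/\alpha)=\E[F(U_{(\ell)})],\qquad \E[U_{(\ell)}]=\frac{\ell}{\alpha},\qquad \Var(U_{(\ell)})=\frac{\ell(\alpha-\ell)}{\alpha^{2}(\alpha+1)}\leq \frac{1}{4(\alpha+1)},
\end{align*}
so the statement becomes a bound on $|\E[F(U_{(\ell)})]-F(\E[U_{(\ell)}])|$ for a variable of variance at most $1/(4(\alpha+1))$.

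The second ingredient is a single Jensen step: for any $\gamma\in(0,2]$ the map $y\mapsto y^{\gamma/2}$ is concave on $[0,\infty)$, so applied to $y=(U_{(\ell)}-\ell/\alpha)^{2}$ it gives
\begin{align*}
\E\bigl[|U_{(\ell)}-\ell/\alpha|^{\gamma}\bigr]=\E\bigl[((U_{(\ell)}-\ell/\alpha)^{2})^{\gamma/2}\bigr]\leq \Var(U_{(\ell)})^{\gamma/2}\leq \frac{1}{2^{\gamma}(\alpha+1)^{\gamma/2}}.
\end{align*}
All that remains is to bound $F(U_{(\ell)})-F(\ell/\alpha)$ pointwise by a constant times $|U_{(\ell)}-\ell/\alpha|^{\gamma}$, and this is the only step that differs between the two regimes of $\gamma$.

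For $\gamma\in(0,1]$ the inequality $|F(u)-F(\ell/\alpha)|\leq\|F\|_{\gamma}|u-\ell/\alpha|^{\gamma}$ is the very definition of the $\gamma$-H\"older seminorm; taking expectations against $U_{(\ell)}$ and invoking the Jensen step above yields the announced estimate. For $\gamma\in(1,2]$ a direct H\"older bound would saturate at exponent $1$, so the trick is to Taylor-expand around $\ell/\alpha$: writing $F(u)=F(\ell/\alpha)+F'(\ell/\alpha)(u-\ell/\alpha)+R(u)$ with $|R(u)|\leq \|F\|_{\gamma}|u-\ell/\alpha|^{\gamma}/\gamma$ (coming from the $(\gamma-1)$-H\"older regularity of $F'$ encoded in $\|F\|_{\gamma}$), the linear term vanishes in expectation because $\E[U_{(\ell)}]=\ell/\alpha$, and again the Jensen step finishes the proof with the same constant.

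No genuine obstacle arises once the Beta representation has been spotted: the $(\alpha+1)^{-\gamma/2}$ rate is then dictated by $\Var(U_{(\ell)})\leq 1/(4(\alpha+1))$. The only mildly subtle point is that one has to split at $\gamma=1$ and exploit the cancellation $\E[U_{(\ell)}]=\ell/\alpha$ in the smoother regime, since plain H\"older alone saturates at exponent $\gamma=1$ and would not recover the optimal rate for $\gamma\in(1,2]$.
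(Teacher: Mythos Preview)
Your proof is correct. The paper itself does not prove this lemma but simply quotes it as Lemma~6 of \cite{DieLer:2017}, so there is no in-paper argument to compare against; your route via the order-statistic/Beta representation $F^\alpha(\ell/\alpha)=\E[F(U_{(\ell)})]$ with $U_{(\ell)}\sim\mathrm{Beta}(\ell,\alpha-\ell)$, followed by Jensen on $y\mapsto y^{\gamma/2}$ and the first-order cancellation $\E[U_{(\ell)}]=\ell/\alpha$ in the regime $\gamma\in(1,2]$, is the standard and natural proof of this Bernstein-type approximation bound. The boundary cases $\ell=0$ and $\ell=\alpha$ (where there is no $\ell$-th order statistic among $\alpha-1$ uniforms) are trivial since $F^\alpha(0)=F(0)=0$ and $F^\alpha(1)=F(1)=1$, and you may want to mention this explicitly for completeness.
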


\medskip

It is then natural to use the estimator defined in \eqref{eq:estF}:
\begin{align*}
\est{F}^\alpha(u)=\frac{1}{\rang\e{\nu}}\sum_{x\in\T}\psi^{\lfloor\alpha u\rfloor}_\alpha\paren{\NT[x^*],\NT}\
\textrm{ with }\ \psi^{l}_\alpha(i,j)&=\frac{\1{i\geq \alpha}}{\binom{i-1+j}{\alpha-1}}\sum_{k=0}^{l-1}\binom{i-1}{k}\binom{j}{\alpha-1-k} .
\end{align*}
Indeed, for any $i\geq \alpha$, $\psi^{l}_\alpha(i,j)=\sum_{k=0}^{l-1}\binom{\alpha-1}{k}\Phi_{k,\alpha-1-k}(i,j),$ where $\Phi_{k,\alpha-1-k}$ is defined in \eqref{eq:Estmalpha}. 
Thus, this estimator is essentially the one obtained from the moment estimators of the previous subsection, but using only the sites $x$ satisfying $\NT\geq \alpha$.  Notice that for any $1\leq l\leq \alpha$ and $i\geq\alpha,j\geq0$,
\begin{align*}
	\sum_{k=0}^{l-1}\binom{i}{k}\binom{j}{\alpha-1-k}&\leq \sum_{k=0}^{\alpha-1}\binom{i-1}{k}\binom{j}{\alpha-1-k}=\binom{i-1+j}{\alpha-1}\enspace,
\end{align*}
thus $\psi^l_\alpha\in[0,1]$. Moreover, Vandermonde's identity:
$$\forall i,j\geq0,\quad \sum_{k=0}^{\alpha-1}\binom{i-1}{k}\binom{j}{\alpha-1-k}=\binom{i-1+j}{\alpha-1}$$
 shows that any $\est{F}^{\alpha}$ is a (random) c.d.f. 
We now have to prove that $\est{F}^\alpha$ estimates correctly $F^\alpha$. This is done in the following lemma.

\begin{lemm}\label{lem:BorneRisqueF}
	For any integers $\alpha,n\geq 1$ and any real $z>0$,
\begin{align*}
	\p{\|\est{F}^\alpha-F^\alpha\|_\infty\geq \frac{K}{\e{\nu}}\sqrt{\frac{z+\log \alpha +2\log \rang}{2\rang}} }\leq Ce^{-z}\ .
\end{align*}
\end{lemm}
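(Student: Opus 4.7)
The plan is to adapt the martingale argument of Proposition \ref{thm:BorneRisqueGW} to the family of ``one-point'' estimators $\est{F}^{\alpha,\ell}:=\est{F}^\alpha(\ell/\alpha)$, and then conclude by a union bound over the at most $\alpha+1$ relevant values of $\ell$. As a first step, since both $\est{F}^\alpha$ and $F^\alpha$ are step functions with jumps only at multiples of $1/\alpha$, one has
\begin{equation*}
\|\est{F}^\alpha-F^\alpha\|_\infty=\max_{0\leq\ell\leq\alpha}\big|\est{F}^{\alpha,\ell}-F^{\alpha,\ell}\big|,\qquad F^{\alpha,\ell}:=\sum_{k=0}^{\ell-1}\binom{\alpha-1}{k}m^{k,\alpha-1-k},
\end{equation*}
so it suffices to bound each difference $|\est{F}^{\alpha,\ell}-F^{\alpha,\ell}|$ separately.

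Fix $\ell\in\{0,\ldots,\alpha\}$. Using the identity $\psi^{\ell}_{\alpha}(i,j)=\sum_{k=0}^{\ell-1}\binom{\alpha-1}{k}\Phi_{k,\alpha-1-k}(i,j)$ valid for $i\geq\alpha$ (as noted right after \eqref{eq:estF}) together with the conditional computation already carried out in the proof of Lemma \ref{lem:Martingaleb}, I would check that
\begin{equation*}
Z^{\alpha,\ell}_{k}:=\1{\NT[y_k]\geq\alpha}\Big(\sum_{y:\,y^*=y_k}\psi^{\ell}_{\alpha}(\NT[y_k],\NT[y])-\e{\nu}F^{\alpha,\ell}\Big),\quad k\geq 1,
\end{equation*}
defines a martingale difference sequence for the filtration $(\F_k)_{k\geq 1}$ introduced in \eqref{defFil}. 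Regrouping the sum defining $\est{F}^{\alpha,\ell}$ according to the parent $x^*$ of each $x\in\T$, a direct bookkeeping yields
\begin{equation*}
M^\ell_\infty:=\sum_{k\geq 1}Z^{\alpha,\ell}_{k}=\e{\nu}\rang\paren{\est{F}^{\alpha,\ell}-F^{\alpha,\ell}}-R^\ell,
\end{equation*}
where the boundary term $R^\ell:=\psi^{\ell}_{\alpha}(n,\NT[e])\in[0,1]$ records the contribution of the vertex $x=e$, whose parent $e^*$ does not belong to $\T$.

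The concentration step then mirrors that of Proposition \ref{thm:BorneRisqueGW}. Since $0\leq\psi^{\ell}_{\alpha}\leq 1$ and $\nu_k\leq K$, the increment $Z^{\alpha,\ell}_{k}$ lies almost surely in an interval of length at most $K$. Introducing the stopping times $\tau_0=0$, $\tau_{m+1}=\inf\{k>\tau_m:\NT[y_k]\geq\alpha\}$, only the $Z^{\alpha,\ell}_{\tau_m}$ may be non-zero, with $\rang=\sum_{m\geq 1}\1{\tau_m<\infty}$. By Doob's optional sampling $(M^\ell_{\tau_m})_{m\geq 0}$ is a martingale, and McDiarmid's inequality yields, for every $m\geq 1$ and $z'>0$,
\begin{equation*}
\P\big(|M^\ell_{\tau_m}|\geq K\sqrt{mz'/2}\big)\leq 2e^{-z'}.
\end{equation*}
Specializing $z'=z+\log\alpha+2\log m$, summing over $m\geq 1$ (the series $\sum_m m^{-2}=\pi^2/6$ compensates the $1/\alpha$ prefactor), and taking a union bound over $\ell\in\{0,\ldots,\alpha\}$ produces
\begin{equation*}
\P\Big(\max_\ell|M^\ell_\infty|\geq K\sqrt{\rang(z+\log\alpha+2\log\rang)/2}\Big)\leq Ce^{-z}.
\end{equation*}
Dividing by $\e{\nu}\rang$ and absorbing the negligible boundary contribution $|R^\ell|/(\e{\nu}\rang)$ into the main bound (on the event $\{\rang\geq 1\}$; the complementary case being trivial by the convention $0/0=0$) yields the announced inequality.

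The only genuine technical point is the simultaneous tightness of the union bounds over the random parameter $m=\rang$ and the deterministic index $\ell$, achieved via the single penalty $z+\log\alpha+2\log m$; everything else is a direct transposition of the martingale argument already performed in Proposition \ref{thm:BorneRisqueGW}.
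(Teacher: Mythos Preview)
Your approach is essentially identical to the paper's: the same martingale difference sequence $Z^{\alpha,\ell}_k$ (called $Y^{\alpha,\ell}_k$ there), the same stopping times $\tau_m$, the same application of McDiarmid's inequality, and the same double union bound over $m$ and $\ell$ with penalty $z+\log\alpha+2\log m$.

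The one place you diverge is the boundary term $R^\ell=\psi^\ell_\alpha(n,\NT[e])$. This stems from reading the sum in \eqref{eq:estF} as including $x=e$ and interpreting $\NT[e^*]=n$. In the paper's conventions $\NT[e^*]$ is simply not defined (there is no edge into $e^*$), so the $x=e$ term contributes nothing via the indicator $\1{i\geq\alpha}$ in $\psi^\ell_\alpha$; the paper's identity $\|\est{F}^\alpha-F^\alpha\|_\infty=\frac{1}{\e{\nu}\rang}\max_\ell|\sum_k Y^{\alpha,\ell}_k|$ is then exact and no boundary term appears. Your handling of $R^\ell$ (``absorbing the negligible boundary contribution'') is a bit loose---$1/(\e{\nu}\rang)$ is not automatically dominated by the main term for all $z>0$---but it can be made rigorous by using $K\geq 2$ and treating small $z$ separately (where $Ce^{-z}\geq 1$ makes the bound vacuous). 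In any case this extra step is unnecessary once the convention on $\NT[e^*]$ is clarified.
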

\begin{proof}
The proof follows the same lines as the one of Proposition \ref{thm:BorneRisqueGW}.
We introduce the sequence: 
$$
\forall 0\leq \ell\leq \alpha,\ \forall k\in\N,\quad Y^{\alpha,\ell}_{k}= \1{\NT[y_{k}]\geq \alpha}\paren{\sum_{y, y^*=y_{k}}\psi^\ell_\alpha\left(\NT[y_{k}],\NT[y]\right)-\e{\nu}F^\alpha\paren{\ell/\alpha}}\ .
$$
 Using same kind of arguments as in the proof of Lemma \ref{lem:Martingaleb}, we can show that  $\paren{Y^{\alpha,\ell}_{k}}_{k\in\N}$ is a martingale difference sequence with respect to the filtration $\seq[k]{\F}$ defined in \eqref{defFil}. As
 $$\|\est{F}^\alpha-F^\alpha\|_\infty=\max_{1\leq \ell\leq \alpha}\ \absj{\est{F}^\alpha\paren{\ell/\alpha}-F^\alpha\paren{\ell/\alpha}}=\frac{1}{\e{\nu}\rang}\max_{1\leq l\leq \alpha}\ \Big|\sum_{k\in\N}Y^{\alpha,\ell}_{k}\Big|,$$
 Lemma \ref{lem:BorneRisqueF} is equivalent to 
	\begin{align*}
	\forall z>0,\ \p{\max_{1\leq l\leq \alpha}\ \Big|\sum_{k\in\N}Y^{\alpha,\ell}_{k}\Big|\geq K\sqrt{\frac{\rang}{2}(z+\log \alpha+2\log \rang)}}\leq Ce^{-z}\ .
	\end{align*}
So we now consider martingale $\paren{M^{\alpha,l}_{k}}_{k\in\N}$ defined by $M^{\alpha,l}_{k}:=\sum_{j\leq k}Y^{\alpha,l}_{j}$ and the same sequence of stopping times $\seq[m]{\tau}$ as the one defined in \eqref{deftau} except that $\alpha+1$ is replaced by $\alpha$:
\begin{align*}
 	&\tau_0=0\text{ and }\forall m\geq1,\ \tau_{m+1}=\inf\set{l> \tau_m,\  \NT[y_l]\geq\alpha}\ .
 \end{align*}
The process $\paren{M^{\alpha,l}_{\tau_m}}_{m\in\N}$ is still a martingale and for any $m\geq1$, 
$$  
M^{\alpha,l}_{\tau_{m-1}}-\e{\nu}F^\alpha\paren{\ell/\alpha}\leq M^{\alpha,l}_{\tau_m}\leq M^{\alpha,l}_{\tau_{m-1}}-\e{\nu}F^\alpha\paren{\ell/\alpha}+K\ .
$$
Then Mc Diarmid's inequality (Theorem 6.7 in \cite{McD89}) yields, for any $1\leq \ell\leq \alpha$,
  \begin{align*}
 	\forall k\in\N,\ \forall z>0,\quad\p{\absj{M^{\alpha,\ell}_{k}}\geq K\sqrt{\frac{kz}{2}}}\leq 2e^{-z}
 \end{align*}
and a union bound gives the result:
  \begin{align*}
 	&\P\Bigg(\max_{1\leq \ell\leq \alpha}\ \Big|\sum_{k\in\N}Y^{\alpha,\ell}_{k}\Big|\geq K\sqrt{\frac{\rang}{2}(z+\log \alpha+2\log \rang)}\Bigg)\\
 =& \p{\bigcup_{k=1}^\infty\Bigg\{\rang=k\ ;\ \max_{1\leq \ell\leq \alpha}\absj{M^{\alpha,\ell}_{k}}\geq K\sqrt{\frac{k}{2}(z+\log \alpha+2\log{k})}\Bigg\}}\\
 \leq& \sum_{k=1}^\infty\sum_{\ell=1}^\alpha\p{\absj{M^{\alpha,\ell}_{k}}\geq K\sqrt{\frac{k}{2}(z+\log \alpha+2\log{k})}}  \leq 2e^{-z}\sum_{k\geq1}\sum_{\ell=1}^\alpha\frac{e^{-2\log{k}}}{\alpha}= \frac{\pi^2}{3}e^{-z}\ .
   \end{align*}
\end{proof}

The inequality of Theorem \ref{thm2.3} follows now immediately from Lemma \ref{lem:biasF}, Lemma \ref{lem:BorneRisqueF} and the fact that $F$ is $\gamma\wedge1$-H\"older.

\subsection{Proof of Theorem \ref{thm:ControlEstFModel}}

The main step of the proof is to show that we can find a constant $C$ such that for any integer $n\geq 1$, and any real $z>0$, there exists a r.v. $\widehat{\alpha}_{z,n}$ depending only on $\paren{X_{k}}_{0\leq k\leq \Tn}$  such that
	\begin{align}\label{ResInt}
	\p{\|\est{F}^{\widehat{\alpha}_{z,n}}-F\|_\infty\geq \inf_{\alpha\geq1}\set{\frac{4K}{\e{\nu}}\sqrt{\frac{z+\log \alpha +2\log \rang}{2\rang}}+\frac{10\|F\|_\gamma}{\alpha^{\gamma/2}}}}\leq Ce^{-z}\ .
        \end{align}
Theorem \ref{thm:ControlEstFModel} can then be easily deduced from \eqref{ResInt} using the estimations of $\rang$ given in Theorem \ref{thm1}.

\noindent Equation \eqref{ResInt} and the estimator $\est{F}^{\widehat{\alpha}_{z,n}}$ is obtained from the collection $\paren{\est{F}^{\alpha}}_{\alpha\geq1}$ via Goldenshluger-Lepski's method (see \cite{GolLep2008}). We explain the construction below. \\
We fix first some real number $z>0$ and define for any integer $\alpha\geq 1$, 
\[\GL=\frac{K}{\e{\nu}}\sqrt{\frac{z+\log \alpha +2\log \rang}{2\rang}}\ \text{ and }\ \Delta(\alpha)=\sup_{\alpha'\ge 1}\set{\norm{\est{F}^{\alpha'}-\est{F}^{\alpha\wedge \alpha'}}-\GL[\alpha']}\enspace.\]
\lemref{BorneRisqueF} and a union bound show that, for some $C>0$,
\begin{equation}\label{eq:ContralVarUnionBound}
\P\paren{\forall \alpha\ge 1,\ \norm{\est{F}^\alpha-F^\alpha}_{\infty}\leq \GL}\ge 1-Ce^{-z}\enspace. 
\end{equation}
Moreover, by \lemref{biasF} and the fact that $F$ is $\gamma\wedge1$-H\"older, we have the following control on the bias of the estimator:
\begin{equation}\label{eq:ControlBiasF}
\norm{F-F^\alpha}_{\infty}\le \frac{2\|F\|_\gamma}{\alpha^{\gamma/2}}\enspace. 
\end{equation}

The random variable $\widehat{\alpha}_{z,n}$ is defined by
\[\widehat{\alpha}_{z,n}=\arg\min_{\alpha\ge 1}\set{\Delta(\alpha)+\GL}\enspace.\]
We now have to check that $\widehat{\alpha}_{z,n}$ satisfies \eqref{ResInt}. Let 
\[\Omega=\set{\forall \alpha\ge 1,\ \norm{\est{F}^\alpha-F^\alpha}_{\infty}\leq \GL}\enspace.\]
By \eqref{eq:ContralVarUnionBound}, $\P\paren{\Omega} \ge 1-Ce^{-z}$. 
On $\Omega$, using first the definition of $\Delta(\alpha)$ and then the one of  $\widehat{\alpha}_{z,n}$, we get for any $\alpha\geq1$,
\begin{align*}
 \norm{\est{F}^{\widehat{\alpha}_{z,n}}-F}_{\infty}&\le \norm{\est{F}^{\widehat{\alpha}_{z,n}}-\est{F}^{\widehat{\alpha}_{z,n}\wedge \alpha}}_{\infty}+\norm{\est{F}^{\alpha}-\est{F}^{\widehat{\alpha}_{z,n}\wedge \alpha}}_{\infty}+\norm{\est{F}^{\alpha}-F^\alpha}_{\infty}+\norm{F^{\alpha}-F}_{\infty}\\
 &\le \Delta(\alpha)+\GL[\widehat{\alpha}_{z,n}]+\Delta(\widehat{\alpha}_{z,n})+\GL+\GL+\frac{2\|F\|_\gamma}{\alpha^{\gamma/2}}\\
 &\le 2\Delta(\alpha)+3\GL+\frac{2\|F\|_\gamma}{\alpha^{\gamma/2}}\enspace.
\end{align*}
Now, for any $\alpha'\geq \alpha$,
\begin{align*}
\norm{\est{F}^{\alpha'}-\est{F}^{\alpha}}_{\infty}-\GL[\alpha']&\le\paren{\norm{\est{F}^{\alpha'}-F^{\alpha'}}_{\infty}-\GL[\alpha']}+\norm{F^{\alpha'}-F^{\alpha}}_{\infty}+\norm{\est{F}^{\alpha}-F^\alpha}_{\infty}\enspace.
\end{align*}
The first term is non positive on $\Omega$, the second one is bounded by $\frac{4\|F\|_\gamma}{\alpha^{\gamma/2}}$ by \eqref{eq:ControlBiasF} and the third one is smaller than $\GL[\alpha]$. It follows that, on $\Omega$, for any $\alpha\geq1$,

$$
\Delta(\alpha)\le \frac{4\|F\|_\gamma}{\alpha^{\gamma/2}}+\GL[\alpha]\quad \text{ and finally }\quad \norm{\est{F}^{\widehat{\alpha}_{z,n}}-F}_{\infty}\le 4\GL+\frac{10\|F\|_\gamma}{\alpha^{\gamma/2}}\enspace.
$$
This proves \eqref{ResInt}. \\

\noindent {\bf Acknowledgment}: we are grateful to an anonymous referee for comments that were useful to  improve the presentation of the paper.

\bibliographystyle{alpha}
\bibliography{thbiblio}

\end{document}